\theoremstyle{plain}
\newtheorem{theorem}{Theorem}[section]
\newtheorem{proposition}[theorem]{Proposition}
\newtheorem{lemma}[theorem]{Lemma}
\theoremstyle{definition}
\newtheorem{definition}[theorem]{Definition}
\newtheorem{assumption}[theorem]{Assumption}
\theoremstyle{remark}
\pgfplotsset{compat=newest}
\newcommand*\colourcheck[1]{%
	\expandafter\newcommand\csname gcmark\endcsname{\textcolor{#1}{\ding{52}}}%
}
\newcommand*\colourxmark[1]{%
	\expandafter\newcommand\csname rxmark\endcsname{\textcolor{#1}{\ding{55}}}%
}
\definecolor{mygreen}{HTML}{02862a}
\definecolor{myred}{HTML}{9a0000}
\definecolor{linen}{HTML}{FAF0E6} 
\definecolor{darkteal}{RGB}{0, 110, 110}
\newcommand{\edit}[1]{\textcolor{black}{#1}}
\newcommand{\algname}[1]{{\sf #1}\xspace}
\newcommand{\CondExp}[2]{{\mathbb{E}}\left[#1 \,\middle| \, #2\right]}
\newcommand{\Nb}{\mathbb{N}}
\title{\textbf{Can SGD Handle Heavy-Tailed Noise?}}
\author[1]{Ilyas Fatkhullin%
\thanks{Corresponding author: \texttt{ilyas.fatkhullin@ai.ethz.ch}}%\thanks{The authors are listed alphabetically.}
}
\author[1]{Florian Hübler}
\author[2]{Guanghui Lan}
\affil[1]{ETH Zürich}
\affil[2]{Georgia Institute of Technology}
\begin{document}
\maketitle

\begin{abstract}
Stochastic Gradient Descent (SGD) is a cornerstone of large-scale optimization, yet its theoretical behavior under heavy-tailed noise---common in modern machine learning and reinforcement learning---remains poorly understood. In this work, we rigorously investigate whether vanilla SGD, devoid of any adaptive modifications, can provably succeed under such adverse stochastic conditions. Assuming only that stochastic gradients have bounded $p$-th moments for some $p \in (1, 2]$, we establish sharp convergence guarantees for (projected) SGD across convex, strongly convex, and non-convex problem classes. In particular, we show that SGD achieves minimax optimal sample complexity under minimal assumptions in the convex and strongly convex regimes: $\mathcal{O}(\varepsilon^{-\frac{p}{p-1}})$ and $\mathcal{O}(\varepsilon^{-\frac{p}{2(p-1)}})$, respectively. For non-convex objectives under Hölder smoothness, we prove convergence to a stationary point with rate $\mathcal{O}(\varepsilon^{-\frac{2p}{p-1}})$, and complement this with a matching lower bound specific to SGD with arbitrary polynomial step-size schedules. Finally, we consider non-convex Mini-batch SGD under standard smoothness and bounded central moment assumptions, and show that it also achieves a comparable $\mathcal{O}(\varepsilon^{-\frac{2p}{p-1}})$ sample complexity with a potential improvement in the smoothness constant. These results challenge the prevailing view that heavy-tailed noise renders SGD ineffective, and establish vanilla SGD as a robust and theoretically principled baseline---even in regimes where the variance is unbounded.
\end{abstract}

\section{Introduction}\label{sec:intro}
Consider a stochastic optimization problem
\begin{eqnarray}\label{eq:problem}
	\min_{x \in \cX} F(x) := \Exp{f(x, \xi)} ,
\end{eqnarray}
where $\cX \subseteq \R^d$ is closed and convex, and $\xi$ is a random variable distributed according to some unknown distribution $\cD.$ Access to $F$ is only available through unbiased stochastic gradients $\nabla f(x, \xi)$, which may exhibit heavy-tailed behavior. Recent empirical studies have highlighted the prevalence of heavy-tailed phenomena in modern machine learning datasets and environments, especially in deep learning \citep{simsekli2019tail,battash2023revisiting,ahn2023linear} and reinforcement learning \citep{Garg_PPO_heavy_tail_2021}. To capture heavy-tailedness formally, we adopt the following moment assumptions.

\begin{assumption}\label{assum:pBM}
	Let $F(\cdot)$ be differentiable on $\cX$.\footnote{In the convex case, we can lift differentiability assumption and work with sub-differentials, but we chose to work with differentiable setting to simplify the exposition and unify assumptions with non-convex setting.} We have access to stochastic gradients with $\Exp{\nabla f(x, \xi)} = \nabla F(x)$ and there exists $p \in (1, 2]$ such that the $p$-th moment is bounded, i.e., 
	$$
	p\text{-BM} \qquad \Exp{\norm{\nabla f(x, \xi)}^p} \leq G^p \qquad \text{for all } x \in \cX . 
	$$ 
\end{assumption}
Under an additional smoothness condition, we consider a refined model referred to as $p$-th Bounded Central Moment or $p$-BCM for short: $\Exp{\norm{\nabla f(x, \xi) - \nabla F(x)}^p} \leq \sigma^p \quad \text{for all } x \in \cX . $\footnote{Under this condition, it is possible to further refine our convergence rates for smooth problems. However, we will not focus on the distinction between these assumptions and in most of the cases we consider they can be used interchangeably. We will formally introduce \hyperref[assum:pBCM]{(p-BCM)} in \Cref{subsec:mini_batch_SGD}.} These assumptions are standard in recent theoretical works on heavy-tailed optimization. When $p<2$, gradient estimates can exhibit unbounded variance, precluding the use of conventional analysis techniques. Such heavy-tailed behavior of data is often used to explain the empirical success of adaptive algorithms over vanilla \algname{SGD}. The examples of such adaptive schemes include methods based on gradient clipping \citep{zhang2020adaptive,sadiev2023high,nguyen2023improved}, Normalized-SGD \citep{hubler2024gradient,liu2024nonconvex}, their combinations \citep{cutkosky2021high,chezhegov2024clipping}, and even more general non-linear schemes \citep{polyak1979adaptive,jakovetic2023nonlinear,NonlinearHP2023Armacki}. In this work, we revisit the analysis of vanilla Stochastic Gradient Descent with (optional) projection under heavy-tailed noise  
\begin{tcolorbox}[colback=green!5, colframe=mygreen!75!black, boxrule=0.8pt, left=4pt, right=4pt, top=3pt, bottom=3pt]
%\begin{tcolorbox}[colback=white,colframe=mygreen!75!black]
  % To center the align environment, removee the skips it automatically introduces
  \setlength\abovedisplayskip{0pt}
  \setlength\belowdisplayskip{0pt}
	\begin{align}
		\text{\algname{SGD}:} \qquad \text{step-size sequence, } \cb{\eta_t}_{t\geq 1} , \notag \qquad 
		x_{t+1} = \Pi_{\cX}(x_t - \eta_t \nabla f(x_{t}, \xi_{t}) ) , 
	\end{align}
\end{tcolorbox}
\noindent where $\Pi_{\cX}(\cdot)$ is the Euclidean projection onto $\cX.$\footnote{The use of projection is optional for some results but it is useful to discuss the implications when $\cX$ is bounded.} While this is one of the simplest and most popular stochastic optimization algorithms, its convergence in the heavy-tailed settings remains elusive. While many adaptive methods mentioned above can achieve optimal convergence, the majority of these works hardly question if \algname{SGD} may have similar properties as these more sophisticated adaptive algorithms. The main argument discussed in the literature concerning the failure of \algname{SGD} in such settings is centered around the following simple example. Consider the $1$-dimensional quadratic function $F(x) = \frac{1}{2} x^2$ on $\cX = \R$ and let $\xi \sim \cD$ be any zero-mean noise with infinite variance and bounded $p$-th moment.\footnote{E.g., we can use two-sided Pareto distribution with tail index $\alpha = 2.$} In that case we have after one step of \algname{SGD} starting from $x_1 = 0$
\begin{equation}\label{eq:quadratic_example}
\Exp{F(x_2)} = \frac{1}{2} \Exp{ \sqnorm{\nabla F(x_2)} } = \frac{1}{2} \Exp{\eta_1^2 \sqnorm{\xi_1}} = + \infty . 
\end{equation}
The last equality holds due to infinite variance whenever the step-size $\eta_1$ is non-adaptive, i.e., predefined/deterministic. Therefore, vanilla \algname{SGD} does not converge in the usual sense neither in expectation of the function value nor gradient norm squared. One might make an erroneous conclusion out of this example that \algname{SGD} is useless under this heavy-tailed model of noise if $p < 2$. However, this example and convergence measure is fairly artificial and in this work we aim to investigate under what conditions and in what sense \algname{SGD} may still converge under infinite variance.

\begin{table}[h]
    \centering
    \begin{threeparttable}
    \begin{tabular}{m{2.5cm}m{3.5cm}m{4.0cm}m{3.5cm}}
    \hline
        & \centering\textbf{Convex} & \centering\textbf{Strongly Convex} & \centering\textbf{Non-convex} 
    \tabularnewline \hline
        \makecell{Convergence \\ Criterion} & 
        \centering $\mathbb{E}[F(\widetilde x_T) - F^*] \leq \varepsilon$ & 
        \centering \makecell{$\mathbb{E}[(F(\bar x_T) - F^*)^{\nicefrac{p}{2}}] \leq \varepsilon^{\nicefrac{p}{2}}$\\or $\mathbb E[\|x_T - x^*\|^p] \leq \varepsilon^{\nicefrac{p}{2}}$ }&  
        \centering $\mathbb{E}[\|\nabla F(\bar x_T)\|^2] \leq \varepsilon^2$ 
    \tabularnewline \hline
        \rowcolor{linen}
        \makecell{Best Step-size \\ Order} & 
        \centering\makecell{ $\eta_t = \frac{D_{\cX}}{G \, t^{\nicefrac{1}{p}}}$  } & 
        \centering\makecell{ $\eta_t = \frac{2}{\mu \, t} $ } & 
        \centering\makecell{ $\eta_t = \rb{\frac{\Delta_1}{L_p G^p} \frac{1}{t}}^{\nicefrac{1}{p}} $ } 
    \tabularnewline
        \rowcolor{linen}
        \makecell{ Complexity \\ in $\Theta(\cdot)$ } &
        \centering\makecell{ 
            $\rb{\frac{G D_{\cX}}{ \varepsilon}}^{\frac{p}{p - 1}}$ \\ \\ 
            Theorem~\ref{thm:convex_SGD}, \citep{nemirovskij_yudin_1979_eff} 
        } &  
        \centering\makecell{ 
            $\rb{\frac{G^2}{\mu \, \varepsilon}}^{\frac{p}{2(p - 1)}}$ \\  \\
            \Cref{thm:SCprojectedSGD_improved}, \citep{zhang2020adaptive} 
        } &  
        \centering\makecell{
            $ \Delta_1 \rb{\frac{L_p^{\nicefrac{1}{p} } G}{\varepsilon^2}}^{\frac{p}{p-1}} $ %\varepsilon^{-\frac{2 p}{p - 1}} 
            \\ \\ {\renewcommand\crefpairconjunction{, }
            \hspace*{2mm}\Cref{thm:NC_SGD_upper,thm:nonconvex.lower_bound.SGD_lower_bound}\hspace*{2mm}}
        }
    \tabularnewline \hline
    \end{tabular}
     \caption{
    Summary of sample/iteration complexity bounds of \algname{SGD} for solving \eqref{eq:problem} under \hyperref[assum:pBM]{(p-BM)} \Cref{assum:pBM}. The ``Convergence Criterion'' row specifies the convergence criterion/measure for each setting with $\widetilde x_T$, $\bar x_T$, $x_T$ denoting \textit{average}, \textit{random} and  \textit{last} iterate convergence respectively. ``Best Step-size Order'' column reports the step-size order to achieve the complexity in the row ``Complexity''. The symbol $\Theta(\cdot)$ means that our rates are optimal or unimprovable for \algname{SGD} under our assumptions. In convex and strongly-convex cases our complexities are minmax optimal for any first-order algorithms; matching lower bounds are established in \citep{nemirovskij_yudin_1979_eff}, \citep{zhang2020adaptive}. In the non-convex setting, we derive an algorithm-specific lower bound to show tightness of our rates in all problem parameters. $D_{\cX}$, $\mu$, $\Delta_1$ and $L_p$ denote the diameter of $\cX$, strong convexity modulus, initial function value gap and the Hölder smoothness constant respectively, refer to corresponding section for formal definitions.
    %There are many other algorithms that achieve similar complexities, e.g., variants of mirror descent \citep{nemirovskij_yudin_1979_eff,vural2022mirror}, \algname{Clip-SGD} \citep{liu2023stochastic}, Normalized \algname{SGD} \citep{hubler2024gradient}, Normalized \algname{Clip-SGD} with momentum \citep{cutkosky2021high} to name a few. Our work, revisits convergence of vanilla \algname{SGD} under \hyperref[assum:pBM]{(p-BM)} and provides a comprehensive analysis in convex, strongly-convex and non-convex settings.   
    }
    \label{tab:summary}
    \end{threeparttable}
\end{table}

\paragraph{Contributions.}
We provide a comprehensive study of \algname{SGD} under the \hyperref[assum:pBM]{(p-BM)} assumption. Our analysis spans the convex, strongly convex, and non-convex settings and the main results are summarized in \cref{tab:summary}. The key contributions are as follows:

\begin{itemize}
\item \textbf{Convex.} We establish that a weighted average function value of \algname{SGD} converges for a wide range of step-size sequences. Moreover, if the diameter of the set $\cX$ is bounded, even the simple average iterate of \algname{SGD} converges in expectation. In the latter case, when the step-size sequence $\eta_t$ is tuned properly, \algname{SGD} achieves the optimal sample complexity $\cO\rb{\varepsilon^{-\frac{p}{p-1}}}$ to find $\widetilde x_T$ with $\mathbb{E}[F(\widetilde x_T) - F^*] \leq \varepsilon.$\footnote{Throughout the work, we use the standard $\Oc\pare{\cdot}, \Omega\pare{\cdot}, \Theta\pare{\cdot}$ complexity notations \citep{MultivariateO}, $\Oct\pare{\cdot}$ additionally hides poly-logarithmic factors. In some cases, we slightly abuse this notation to highlight the dependence of the complexities on certain variables of our focus, e.g., $\varepsilon$, $p$, which will hopefully be clear from the context.} We also complement this upper bound with a nearly tight high-proabability lower bound showing that when the progress is measured in probability, a large class of non-adaptive first-order algorithms (including \algname{SGD}) will necessarily suffer from a polynomial dependence on the inverse of failure probability. 

\item \textbf{Strongly convex.} In this case we show that \algname{SGD} with step-size $\eta_t = \frac{2}{\mu \, t}$, $t \geq 1$ converges in function value, $\mathbb{E}[\rb{F(\bar x_T) - F^*}^{\nicefrac{p}{2}}] \leq \varepsilon^{\nicefrac{p}{2}} $ for a point $\bar x_T$ sampled uniformly from the iterates $\cb{x_t}_{t\leq T},$ and in terms of the distance to the optimum, $\mathbb E[\|x_T - x^*\|^p] \leq \varepsilon^{\nicefrac{p}{2}},$ with optimal sample complexity $\cO\rb{\varepsilon^{-\frac{p}{2(p-1)}}}$. This implies that in strongly convex case, \algname{SGD} only requires knowledge of strong convexity modulus $\mu$ and is the first optimal algorithm that does not require knowledge of the tail index $p$.

\item \textbf{Non-convex.} If function $F(\cdot)$ is Hölder smooth of order $\nu = p$,\footnote{It is important for our analysis that Hölder smoothness order $\nu$ does not exceed the order of the tail index $p.$ We consider the case $\nu = p$ for simplicity.} we show that \algname{SGD} converges to a stationary point in expectation. In particular, in the unconstrained case this implies that we can find a point $\bar x_T$ with $\Exp{\norm{\nabla F(\bar x_T)}^2} \leq \varepsilon^2$ after $\cO\rb{L_p^{\frac{1}{p-1}}\varepsilon^{-\frac{2p}{p-1}}}$ iterations of \algname{SGD}, where $L_p$ is the Hölder constant. Furthermore, we show tightness of this upper bound by constructing a matching algorithm-specific lower bound for \algname{SGD} with arbitrary polynomial step-size sequence. \edit{This lower bound construction is novel even for $p=2$ and can be of independent interest.}
%\item \textbf{Mini-batch SGD.} 
\edit{Finally, we explore a mini-batch variant of non-convex SGD, \algname{Mini-batch SGD}, under the standard $L$-smoothness and $p$-BCM assumptions. We establish the $\cO\rb{L^{\frac{p}{2(p-1)}}  \varepsilon^{-\frac{2p}{p - 1}}}$ sample complexity for finding an $\varepsilon$-stationary point in the sense that $\Exp{\norm{\nabla F(\bar x_T)}^p} \leq \varepsilon^p.$ This complexity is in line with the one of non-convex \algname{SGD} under Hölder smoothness and can provide further improvement when $L^p \ll L_p^2.$}

\end{itemize}

Overall, our results unify and extend the theoretical understanding of \algname{SGD}, demonstrating that optimal convergence rates in heavy-tailed regimes can be achieved without any adaptive schemes across a wide range of problem classes. Besides, our negative results about the lack of high probability convergence and tightness result for non-convex \algname{SGD} provide insights about when simple algorithms like \algname{SGD} are not sufficient and the use of adaptive methods is well-justified.

\section{Related Work}
In this section, we will briefly summarize the literature related to optimization under heavy-tailed noise, and will make more detailed comparison to most related work in the discussion in the subsequent sections. 

\textbf{Adaptive methods under heavy-tailed noise.}
A growing body of work develops adaptive algorithms that achieve provable convergence under heavy-tailed noise. These include mirror descent variants \citep{nemirovskij_yudin_1979_eff,vural2022mirror}, \algname{Clip-SGD} \citep{zhang2020adaptive,sadiev2023high,nguyen2023improved,liu2023stochastic}, \algname{Normalized SGD} \citep{hubler2024gradient}, \algname{Clip-AdaGrad} \citep{chezhegov2024clipping}, \algname{Clip-SGD} with normalization and momentum \citep{cutkosky2021high} and other non-linear schemes \citep{polyak1979adaptive,jakovetic2023nonlinear,NonlinearHP2023Armacki}. Some of these works go beyond in-expectation analysis and develop high-probability guarantees with polylogarithmic dependence on the inverse failure probability, $1/\delta$, thanks to adaptive step-sizes or other complex techniques such as robust distance estimation \citep{davis2021low} and robust gradient aggregation methods \citep{puchkin2024breaking}. Although adaptive methods often offer strong convergence guarantees, vanilla \algname{SGD} may retain distinct advantages. For instance, \cite{wan2023implicit} show that \algname{SGD} under heavy-tailed noise can induce implicit compressibility—a property potentially lost in adaptive schemes involving clipping or normalization. Nevertheless, the theoretical understanding of vanilla \algname{SGD} in the heavy-tailed setting remains limited. The only existing convergence result, due to \cite{wang2021convergence}, relies on restrictive technical assumptions and fails to achieve optimal rates; see \Cref{sec:SC_SGD} for further discussion.

\textbf{Lower bounds and negative results.} Several works provide sample-complexity lower-bounds for first-order algorithms in different heavy-tailed regimes. For Lipschitz, convex functions on a bounded domain, the seminal work \citep{nemirovskij_yudin_1979_eff,raginsky2009information} provides a tight $\Omega\pare{\eps^{-p/(p-1)}}$ lower-bound. For $L$-smooth strongly-convex and non-convex functions, \citep{zhang2020adaptive} establish $\Omega\pare{\eps^{-\frac p {2(p-1)}}}$ and $\Omega\pare{\eps^{-\frac{3p - 2}{p-1}}}$ lower-bounds respectively for the class of first-order methods. The works \cite{sadiev2023high,PriceAdaptivityStochastic2024Carmon} show in probability lower bounds, but their construction uses bounded noise and is limited to the bounded variance case. In comparison, our in probability construction is nearly tight for any $p\in (1, 2]$ and works for a large class of algorithms. Under light-tailed noise, algorithm-specific lower-bounds are established for \algname{SGD} in \citep{drori2020complexity} and for stochastic approximation in \citep{khodadadian2025general}. The lower bounds for Hölder-smooth functions are available in \citep{guzman2015hoeldersmoothlowerbound,doikov2022lower,bai2025tight}. We complement these works by providing an algorithm specific lower-bounds for \algname{SGD} for the heavy-tailed \emph{and} Hölder-smooth setting.

\section{Convex Setting}
Our main running assumption in this section is standard.  
\begin{assumption}\label{ass:convex}
	The objective function $F(\cdot)$ is convex on $\cX \subseteq \R^d$ and there exists an optimizer $x^* \in \argmin_{y\in \cX} F(y) \subseteq \cX$.
\end{assumption}

\subsection{Upper Bound for SGD}
To analyze \algname{SGD}, we consider the Lyapunov/potential function 
$
\Exp{\norm{x_t - x^*}^p},$ generalizing the classical analysis with $\Exp{\norm{x_t - x^*}^2} .
$
We choose this new potential since the standard one with the power $2$ does not have to be bounded even after making the first step of \algname{SGD}, while the $p$-th power is bounded and is hence a suitable choice. The key technical challenge of the analysis is to build up an appropriate recursion for the $p$-th power of the norm. This is not straightforward since the direct argument of unrolling the square of the Euclidean norm by using inner product does not apply anymore. In order to overcome this challenge, our main observation is that the Euclidean norm raised to the power $p$, $p > 1$ (i.e., $\norm{x}^p$) is $(p-1)$-Hölder smooth \cite[Theorem 6.3]{Rodomanov2020}. 
\begin{tcolorbox}[colback=white,colframe=mygreen!75!black]	
\begin{theorem}\label{thm:convex_SGD}
	Let Assumptions~\ref{assum:pBM} and \ref{ass:convex} hold (\hyperref[assum:pBM]{(p-BM)} and convexity) with $p\in (1, 2]$, and \algname{SGD} is run with non-negative step-sizes $\cb{\eta_t}_{t\geq 1}$. %Define the average iterate $\bar x_{T} := (\sum_{t=1}^T w_t)^{-1} \sum_{t=1}^T w_t x_t$, 
    Then for any $T\geq 1$
	$$  
    \frac{\sum_{t=1}^T \Exp{w_t (F(x_t) - F(x^*)) }}{\sum_{t=1}^T \Exp{w_t} }
    \leq \frac{ \norm{x_1 - x^*}^2 + 4 G^2 \rb{\sum_{t=1}^T \eta_t^p}^{\nicefrac{2}{p}}}{\sum_{t=1}^T \eta_t } , \qquad w_t \eqdef \eta_t \norm{x_t - x^*}^{p-2} . 
    $$
    If, additionally, the set $\cX$ is bounded with diameter $D_{\cX},$  then for any $T\geq 1$
    $$
 \Exp{ F(\widetilde x_T) - F(x^*) } \leq  \frac{D_{\cX}^{2-p}\rb{ \norm{x_1 - x^*}^p + 4 G^p \sum_{t=1}^T \eta_t^p } }{\sum_{t=1}^T \eta_t} , \qquad \widetilde x_T \eqdef \frac{\sum_{t=1}^{T} \eta_t \, x_t }{\sum_{t=1}^{T} \eta_t} .
$$
\end{theorem}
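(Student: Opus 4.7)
The plan is to track the potential $V_t := \norm{x_t - x^*}^p$, replacing the standard square potential which can have infinite expectation under \hyperref[assum:pBM]{(p-BM)} with $p<2$ (as the example in \eqref{eq:quadratic_example} illustrates). The crucial tool, flagged in the preamble to the theorem, is the $(p-1)$-Hölder smoothness of $z \mapsto \norm{z}^p$: there is an explicit constant $C_p > 0$ with
\begin{equation*}
\norm{y + h}^p \leq \norm{y}^p + p \norm{y}^{p-2}\langle y, h\rangle + C_p \norm{h}^p, \qquad y, h \in \R^d.
\end{equation*}
I would apply this with $y = x_t - x^*$ and $h = -\eta_t g_t$ where $g_t := \nabla f(x_t,\xi_t)$, use non-expansiveness of $\Pi_{\cX}$ (since $x^* \in \cX$) to pass from $\norm{x_t - x^* - \eta_t g_t}^p$ to $V_{t+1}$, and then condition on the history $\mathcal{F}_t$. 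Unbiasedness of $g_t$ turns the cross term into $-p\eta_t \norm{x_t - x^*}^{p-2}\langle x_t - x^*, \nabla F(x_t)\rangle$, convexity upgrades this to $-p\, w_t (F(x_t) - F(x^*))$, and \hyperref[assum:pBM]{(p-BM)} bounds the remainder by $C_p G^p \eta_t^p$. Telescoping over $t=1,\ldots,T$ yields the master inequality
\begin{equation*}
p \sum_{t=1}^T \Exp{w_t (F(x_t) - F(x^*))} \leq \norm{x_1 - x^*}^p + C_p G^p \sum_{t=1}^T \eta_t^p \;=:\; R^p, \qquad (\star)
\end{equation*}
which I use in both parts.

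The bounded-diameter bound falls out immediately: $\norm{x_t - x^*} \leq D_{\cX}$ together with $p-2<0$ forces $w_t \geq D_{\cX}^{p-2}\eta_t$ almost surely, so $(\star)$ gives $\sum_t \eta_t \Exp{F(x_t) - F(x^*)} \leq \frac{D_{\cX}^{2-p}}{p} R^p$. Convexity $F(\widetilde x_T) \leq \sum_t \eta_t F(x_t)/\sum_t \eta_t$ then delivers the stated form, with $1/p$ and $C_p$ absorbed into the universal constant $4$.

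For the unbounded case I need a lower bound on $\sum_t \Exp{w_t}$. Since $F(x_t) - F(x^*) \geq 0$, telescoping $(\star)$ further yields $\Exp{V_t} \leq R^p$ for every $t$, whence $\Exp{\norm{x_t - x^*}} \leq R$ by Jensen applied to the convex map $u \mapsto u^p$. A second Jensen, this time on the convex function $u \mapsto u^{p-2}$ on $(0,\infty)$ (convex since $(p-2)(p-3) > 0$), gives $\Exp{\norm{x_t - x^*}^{p-2}} \geq (\Exp{\norm{x_t - x^*}})^{p-2} \geq R^{p-2}$, hence $\sum_t \Exp{w_t} \geq R^{p-2}\sum_t \eta_t$. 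Dividing $(\star)$ by this lower bound produces $\tfrac{\sum_t \Exp{w_t (F(x_t)-F(x^*))}}{\sum_t \Exp{w_t}} \leq \tfrac{R^2}{p \sum_t \eta_t}$, and the elementary sub-additivity $(a+b)^{2/p} \leq 2^{2/p - 1}(a^{2/p} + b^{2/p})$ (valid for $2/p \geq 1$) rewrites $R^2$ in the form $\norm{x_1 - x^*}^2 + 4 G^2 \rb{\sum_t \eta_t^p}^{2/p}$ appearing in the theorem.

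The main obstacle is pinning down step one: the standard expansion $\norm{x+h}^2 = \norm{x}^2 + 2\langle x,h\rangle + \norm{h}^2$ is useless since $\Exp{\norm{g_t}^2}$ can be infinite, and the entire argument depends on a clean $p$-th power descent lemma extracted from the $(p-1)$-Hölder smoothness of $\norm{\cdot}^p$. A secondary delicate point is the lower bound on $\Exp{w_t}$ in the unbounded case: a naive application of Jensen to $u^{p-2}$ points the wrong way, so one must first bootstrap the auxiliary bound $\Exp{\norm{x_t - x^*}} \leq R$ from the non-negativity of $F(x_t) - F(x^*)$ inside $(\star)$ before Jensen can be turned into a usable inequality.
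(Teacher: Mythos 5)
Your proposal is correct and follows essentially the same route as the paper: the $(p-1)$-Hölder smoothness of $\norm{\cdot}^p$, non-expansiveness of the projection, convexity and \hyperref[assum:pBM]{(p-BM)} to obtain the telescoped recursion in $\Exp{\norm{x_t-x^*}^p}$, with the bounded-diameter case handled by $w_t \geq D_{\cX}^{p-2}\eta_t$ and the general case by lower-bounding $\sum_t\Exp{w_t}$ via the a priori bound $\Exp{\norm{x_t-x^*}^p}\leq R^p$. The only (immaterial) difference is that you lower-bound $\Exp{\norm{x_t-x^*}^{p-2}}$ by two Jensen steps through $\Exp{\norm{x_t-x^*}}$, whereas the paper applies Jensen once to the convex map $u\mapsto u^{\frac{p-2}{p}}$ of $\Exp{\norm{x_t-x^*}^p}$; both yield the same $R^{p-2}$ bound, and your final constant bookkeeping is at the same level of looseness as the paper's.
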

\end{tcolorbox}
\begin{proof}
We start with the Hölder smoothness of $\norm{\cdot}^p$  \cite[Theorem 6.3]{Rodomanov2020}, i.e., 
\begin{align}
		\norm{v+w}^p
		&\leq \norm{v}^p + p \frac{\<v , w\>}{\norm{v}^{2-p}} + 2^{2-p}\norm{w}^p \qquad \text{for any } v, w \in \cX, v \neq 0 .\label{eq:Holder_convex}
\end{align}
By non-expansiveness of the projection, using the update rule of \algname{SGD} and the above inequality with $v \eqdef x_t - x^*$, $w \eqdef \eta_t \, \nabla f(x_t, \xi_t)$, and assuming $x_t \neq x^*$,\footnote{We implicitly assume throughout that $x_t \neq x^*$ for any $t \leq T$, otherwise the problem is solved.} we have 
	\begin{align*}
		\norm{\xtp - x^*}^p 
		& = \norm{\Pi_{\cX}(\xt - \eta_t \nabla f(x_t, \xi_t) ) - \Pi_{\cX}(x^*)}^p \\
        & \leq \norm{\xt - x^* - \eta_t \, \nabla f(x_t, \xi_t) }^p \\
		& \overset{\eqref{eq:Holder_convex}}{\leq} \norm{\xt - x^*}^p - \eta_t p \frac{\<\nabla f(x_t, \xi_t), \xt - x^* \>}{\norm{\xt - x^*}^{2-p}} + 2^{2-p} \eta_t^p \norm{\nabla f(x_t, \xi_t) }^p .		
	\end{align*}
	Next we take conditional expectation and use convexity \Cref{ass:convex} along with \hyperref[assum:pBM]{(p-BM)} to derive
	\begin{align}
		\CondExp{\norm{\xtp - x^*}^p}{\xt}
		&\leq \norm{\xt - x^*}^p - \eta_t p \frac{F(x_t) - F(x^*) }{\norm{\xt - x^*}^{2-p}} + 2^{2-p} \eta_t^p G^p . \label{eq:cond-recursion_convex}
	\end{align}
Define $r_t \coloneqq \|x_t - x^*\|$, $\Delta_t \coloneqq F(x_t) - F(x^*)$, $w_t \coloneqq \eta_t \, r_t^{p-2}$. Taking the total expectation of \eqref{eq:cond-recursion_convex}, we obtain:
\begin{align}\label{eq:convex_expect_rec}
\mathbb{E}[r_{t+1}^p] 
&\leq \mathbb{E}[r_t^p] - p \, \mathbb{E}\left[  w_t \Delta_t \right] + 2^{2 - p} \eta_t^p G^p. 
\end{align}
Ignoring the negative term on the RHS and unrolling, we establish for any $t\geq 1$
\begin{eqnarray}\label{eq:convex_bound_distance}
    \Exp{r_t^p} \leq r_1^p + 2^{2-p} G^p \sum_{\tau=1}^{t-1} \eta_{\tau}^p \leq r_1^p + 2^{2-p} G^p \sum_{\tau=1}^{t} \eta_{\tau}^p \eqdefright C_t.
\end{eqnarray}
Coming back to \eqref{eq:convex_expect_rec}, we derive the bound 

\begin{eqnarray*}
\frac{\sum_{t=1}^T \Exp{w_t \Delta_t}}{\sum_{t=1}^T \Exp{w_t} } &\leq& \frac{ C_T }{\sum_{t=1}^T \mathbb{E}[w_t] } 
= \frac{ C_T }{\sum_{t=1}^T \eta_t \mathbb{E}[ (r_t^p)^\frac{p-2}{p} ] }
\\
&\leq& \frac{ C_T }{\sum_{t=1}^T \eta_t \mathbb{E}[ r_t^p ]^\frac{p-2}{p} } \leq \frac{ C_T }{\sum_{t=1}^T \eta_t C_T^\frac{p-2}{p} } = \frac{ C_T^{\nicefrac{2}{p}} }{\sum_{t=1}^T \eta_t },
\end{eqnarray*}
where the second inequality follows by convexity of $x\mapsto x^{\frac{p-2}{p}}$ along with Jensen's inequality, and the last inequality follows from \eqref{eq:convex_bound_distance} and the fact that $\frac{p-2}{p} \leq 0$. The final bound follows by recalling the definition of $C_T$ given in \eqref{eq:convex_bound_distance} and simplifying the rate using the fact that $p\in (1, 2].$

If, additionally, the diameter is bounded, then we can upper bound the denominator in \eqref{eq:cond-recursion_convex} and repeat similar steps to derive the second inequality in the theorem statement.
\end{proof}

\paragraph{Discussion.} Notice that convergence is shown for an average suboptimality gap weighted with the dynamic random weights $w_t = \eta_t \norm{x_t - x^*}^{p-2}$. Unfortunately, due to the correlation between the suboptimality and the weights, it is challenging to characterize the convergence for a specific point $x_t$ in the sequence $\cb{x_t}_{t\geq 1}$ even if we knew the weights $\cb{w_t}_{t\geq 1}$. It will be also clear from the lower bound construction in the next subsection that for any predefined (non-stochastic) weighted output, \algname{SGD} cannot converge in expected sub-optimality in the general setting when the set $\cX$ is allowed to be unbounded. However, when the diameter of $\cX$ is bounded we can establish convergence for implementable quantities: the running average iterate $\widetilde x_T = \sum_{t=1}^{T} \eta_t x_t / \sum_{t=1}^{T} \eta_t $ or the simple average $\widetilde x_T = \frac{1}{T} \sum_{t=1}^T x_t$.

Now we discuss several choices for the step-size sequence. First, if we know all problem parameters, we can set the step-sizes diminishing as $\eta_t = \frac{\norm{x_1 - x^*}}{G \, t^{\nicefrac{1}{p}}}$ for all $t \geq 1$, or constantly $\eta_t \equiv \frac{\norm{x_1 - x^*}}{G \, T^{\nicefrac{1}{p}}}$ to obtain
    $
    \Exp{ F(\widetilde x_T) - F(x^*) } = \widetilde{\cO}\rb{\frac{G D_{\cX}^{2-p} \|x_1 - x^*\|^{p-1} }{T^{\frac{p-1}{p}}}}$ 
for all $T\geq 1$. The logarithmic term hidden in $\widetilde{\cO}$ disappears for constant step-size. This implies the  $\cO(\varepsilon^{-\frac{p}{p-1}})$ sample complexity, which is known to be optimal under Assumptions~\ref{assum:pBM} and \ref{ass:convex} for all $p\in (1, 2]$, see e.g., \citep[Chapter V, Section 3.1]{nemirovskij_yudin_1979_eff} or \citep{raginsky2009information}. Second, we explore a universal step-size strategy which does not require knowledge of any parameters, $\eta_t = 1/\sqrt{t}$ for all $t\geq 1$. In this case we have for all $T\geq 1$ the bound $
    \Exp{ F(\widetilde x_T) - F(x^*) }  \leq \widetilde{\cO}\rb{ \frac{D_{\cX}^{2-p} \|x_1 - x^*\|^2 }{\sqrt{T}} +  \frac{  D_{\cX}^{2-p} G^p }{T^{\frac{p-1}{2}}} } . $ This result shows that even untuned projected \algname{SGD} converges under heavy-tailed noise. While for $p=2$ this convergence rate is minimax optimal in terms of the dependence on $T$, it is suboptimal for all other moments $p\in (1, 2).$ In \Cref{sec:nonconvex.lower_bound} and \Cref{sec:experiments}, we explore theoretically and experimentally the sensitivity of \algname{SGD} to step-size misspecification and find that this convergence rates are tight for both above mentioned step-size strategies. 

\paragraph{Comparison to prior work.} Several existing adaptive algorithms in the literature achieve similar convergence rates as above. For example, a scheme based on the mirror descent framework was proposed in the seminal work \citep[Chapter V, Section 2.1]{nemirovskij_yudin_1979_eff} and later revisited in \citep{vural2022mirror,liu2023revisiting}. The update rule of this scheme, when simplified to the unconstrained Euclidean setup, can be written as
$$
\text{\algname{p-SMD}:} \qquad x_{t+1} = \frac{ x_t \norm{x_t}^{\frac{p}{p-1}} - \eta_t \nabla f(x_t, \xi_t)   }{ \norm{  x_t \norm{x_t}^{\frac{p}{p-1}} - \eta_t \nabla f(x_t, \xi_t)   }^{2-p} } , \qquad \eta_t = \frac{\eta_1}{t^{\nicefrac{1}{p}}}  . 
$$
As we can see even after some conceptual simplification, this method is more complicated than vanilla \algname{SGD}. First, it can be classified as an adaptive/normalized method due to the normalization of the updated point. Second, it requires computing the norm of $x_t$ and another auxiliary vector. Third, it not only requires tuning the step-size $\eta_t$, but also requires setting the correct power for computing $\norm{x_t}^{\frac{p}{p-1}},$ which is critical for convergence. This leads to unnecessary complications for implementations and computational instabilities. In fact, the implementation of the projected variant of this method requires an additional non-Euclidean projection, which can add extra computational burden. 

Another modification of \algname{SGD} that has become very popular in stochastic optimization and machine learning literature recently is called \algname{SGD} with gradient clipping or \algname{Clip-SGD} \citep{zhang2020adaptive,sadiev2023high,liu2023stochastic,zhang2022parameter}. For a predefined sequence of clipping thresholds $\cb{\lambda_t}_{t\geq 1}$, the update rule of this algorithm is 
\[
		\text{\algname{Clip-SGD}: } \qquad x_{t+1} = \Pi_{\mathcal{X}} \left( x_t - \eta_t g_t \right), \quad g_t = \operatorname{clip}\left( \nabla f(x_t, \xi_t), \lambda_t \right) 
\]
	with $\operatorname{clip}(v, \lambda) := v \cdot \min\left\{1, \lambda / \|v\|_2 \right\}$. The typical choice of the sequence $\cb{\lambda_t}_{t\geq 1}$ in the convex setting is increasing and has the order $\lambda_t = \lambda_1 t^{\nicefrac{1}{p}}$, see e.g., \cite{nguyen2023improved,sadiev2023high,liu2023stochastic}.\footnote{In fact the choice of exact sequence is more complicated and depends on other problem parameters, but we report the asymptotic behavior for intuition.} While this algorithm is simpler than the previous scheme, it still requires additional hyper-parameter tuning for sequence $\cb{\lambda_t}_{t\geq 1}$. 
    
    To summarize, the main advantage of our analysis in \Cref{thm:convex_SGD} is achieving similar convergence rates to above mentioned methods with a simpler algorithm -- vanilla \algname{SGD}. 

\subsection{Lower Bound in High Probability}
The results in the previous section imply, in particular, that for any $p\in (1, 2]$, \algname{SGD} with constant step-size $\eta_t = \frac{ \|x_1 - x^*\| }{G \, T^{\nicefrac{1}{p}}}$ converges at the rate
    $
    \Exp{ F\rb{\widetilde x_T} - F(x^*) } \leq \frac{5\, G D_{\cX}^{2-p} \|x_1 - x^*\|^{p-1}  }{T^{\frac{p-1}{p}}} . 
    $
While this rate is known to be optimal \cite{nemirovskij_yudin_1979_eff,raginsky2009information} in-expectation, it does not give us any insights about the behavior of an individual run of the algorithm. To measure the concentration of \algname{SGD} around this expected convergence rate, we can use the Markov's inequality, which implies with probability at least $1-\delta$:
\begin{equation}\label{eq:HP_upper_via_Markov}
 F\rb{\widetilde x_T} - F(x^*)  \leq \frac{5\,G D_{\cX}^{2-p} \|x_1 - x^*\|^{p-1}   }{T^{\frac{p-1}{p}}} \frac{1}{\delta}  , \qquad \text{where } \quad \widetilde{x}_T = \frac{1}{T}\sum_{t=1}^T  x_t . 
\end{equation}
As we can see, this result has a poor dependence on $1/\delta$. In this section, we consider the large class of first-order methods with satisfying the cone condition, i.e., for any $T \geq 1$
\begin{equation}\label{eq:cone_method}
    x_{T}^{out} \in x_1 - \text{Cone}\cb{\nabla f(x_1, \xi_1), \ldots, \nabla f(x_{T-1}, \xi_{T-1}) } .
\end{equation}
It is important to note that we focus on the case when the cone coefficients determining the specific algorithm are predefined/deterministic. We will show that for such algorithms, a similar polynomial dependence as in \eqref{eq:HP_upper_via_Markov} is inevitable.\footnote{Here we say ``similar'' because formally our upper bound is established under bounded diameter assumption, while the lower bound construction has an unbounded domain.} This will imply, in particular, that \algname{SGD} for arbitrary step-sizes and any reasonable output strategy will suffer from such polynomial dependence on $1/\delta.$ 

\begin{tcolorbox}[colback=white,colframe=mygreen!75!black]	
\begin{theorem}\label{thm:SGD_HP_lower_bound}
    Let $p \in (1, 2]$, $T\geq 2$, $\delta \in (0, \nicefrac{1}{8}]$, and the stochastic gradient oracle  satisfies \Cref{assum:pBM}. Then for any algorithm satisfying the cone condition \eqref{eq:cone_method}, there exists a convex problem \eqref{eq:problem} such that %$\Exp{F(x_{T}^{out})} = \infty$. Moreover, 
    for any $\alpha > p$, with probability at least $\delta$
    $$
%F(\widetilde{x}_{T}) \geq 
F(x_{T}^{out}) - F(x^*) \geq %\frac{G D_{\cX} }{2} \min\cb{1 ; \frac{1 }{2 \, (T-1)^{\frac{\alpha - 1}{\alpha}}} \rb{\frac{1}{4 \, \alpha\, \delta}}^{\nicefrac{1}{\alpha}} \frac{\Gamma(1-p)}{\Gamma(1-p/\alpha)} } 
\frac{\norm{x_1 - x^*}}{2 (T-1)^{\frac{\alpha-1}{\alpha}}}  \rb{\frac{1}{4 \, \alpha \, \delta}}^{\nicefrac{1}{\alpha}}  = \Omega\rb{ \frac{G \norm{x_1 - x^*} }{ T^{\frac{\alpha - 1}{\alpha}}} \rb{\frac{1}{ \delta}}^{\nicefrac{1}{\alpha}} }, 
$$
where the last equality holds for $T \geq 1 + 2^{\frac{\alpha}{\alpha-1}}\rb{\frac{1}{2 \,\alpha\, \delta}}^{\frac{1}{\alpha-1}} \rb{\frac{\Gamma(1-p)}{\Gamma(1-p/\alpha)}}^{\frac{\alpha}{\alpha - 1}},$ and $\Gamma(x)$ is the Gamma function.
\end{theorem}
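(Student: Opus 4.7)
The strategy is to build a one-dimensional hard instance paired with a carefully designed heavy-tailed oracle, then exploit the cone-condition representation of the output. I would take $F(x)=G|x|$ on $\cX=\mathbb{R}$, so that $x^{*}=0$ and $\norm{x_1-x^{*}}=R$, and define the oracle $\nabla f(x,\xi)=G\,\text{sign}(x)+\xi$, where $\xi_1,\dots,\xi_{T-1}$ are i.i.d.\ symmetric with a Pareto-type tail $\mathbb{P}[|\xi|\ge s]\propto s^{-\alpha}$ for some $\alpha>p$, rescaled so that $\mathbb{E}[|\nabla f(x,\xi)|^p]\le G^p$. The exact normalization of the $p$-th moment of this tail (via an elementary Mellin/Gamma-type integral) is what introduces the ratio $\Gamma(1-p)/\Gamma(1-p/\alpha)$ in the admissibility condition on $T$. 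Any output satisfying the cone condition \eqref{eq:cone_method} then decomposes as
\begin{equation*}
x_T^{out}=x_1-G\sum_{t=1}^{T-1}c_t\,\text{sign}(x_t)-\sum_{t=1}^{T-1}c_t\,\xi_t,
\end{equation*}
with deterministic $c_t\ge 0$.

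The plan is then to split on $C:=\sum_t c_t$. In the \emph{lazy} regime $C\le R/(2G)$, the drift term cannot push the iterate more than $R/2$ away from $x_1$, so with positive probability $|x_T^{out}|\ge R/2$ and the suboptimality $GR/2$ already exceeds the target bound in the stated range of $T$. In the \emph{aggressive} regime $C\ge R/(2G)$, I use independence and the Pareto tail to get
\begin{equation*}
\mathbb{P}\!\left[\max_t c_t|\xi_t|\ge\tau\right]\ge 1-\prod_{t=1}^{T-1}\!\left(1-(Gc_t/\tau)^\alpha\right)\ge \tfrac12\sum_{t=1}^{T-1}(Gc_t/\tau)^\alpha,
\end{equation*}
once $\tau$ is large enough for the first-order expansion to dominate. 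Setting the right-hand side equal to a constant multiple of $\delta$ gives $\tau\asymp G\,\delta^{-1/\alpha}\bigl(\sum_t c_t^\alpha\bigr)^{1/\alpha}$, and the power-mean inequality (valid for $\alpha>1$) lower bounds $\sum_t c_t^\alpha\ge (T-1)^{1-\alpha}C^\alpha\ge (T-1)^{1-\alpha}(R/(2G))^\alpha$. Combining yields
\begin{equation*}
\tau\gtrsim \frac{R}{2(T-1)^{(\alpha-1)/\alpha}}\left(\frac{1}{4\alpha\delta}\right)^{1/\alpha},
\end{equation*}
matching the theorem. Symmetry of $\xi$ then orients the dominant shock adversely with probability $1/2$, forcing $|x_T^{out}|\gtrsim\tau$ and hence $F(x_T^{out})-F(x^*)=G|x_T^{out}|$ to reach the claimed lower bound.

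The main obstacle will be upgrading a bound on $\max_t c_t|\xi_t|$ to a matching bound on the signed sum $\sum_t c_t\xi_t$ in the correct direction. I would invoke the \emph{single big jump} principle for heavy-tailed sums: for $\alpha$-Pareto summands the tail of the sum is asymptotically that of the maximum, the remaining terms contributing only a lower-order perturbation with probability $1-o(\delta)$. A second subtlety is that $\text{sign}(x_t)$ is random because the iterates themselves depend on past noise; this is handled by further restricting to the event that no extreme $\xi_s$ occurs strictly before the dominant one, which happens with probability at least $1-O(Tq)$ (with $q$ the per-step tail mass) and ensures the trajectory up to the dominant shock coincides with its noise-free counterpart and stays on a fixed side of the origin. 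The explicit constants $2^{\alpha/(\alpha-1)}$, $(1/(2\alpha\delta))^{1/(\alpha-1)}$, and the Gamma-function factor in the threshold on $T$ then arise from making the approximation $1-(1-x)^{T-1}\approx(T-1)x$ quantitative via a second-order Taylor remainder, under exactly the stated smallness condition on $T$.
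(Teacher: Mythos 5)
Your plan has the right one–dimensional, cone-decomposition flavor, but two of its load-bearing steps do not hold up, and both are places where the paper's construction is genuinely different. First, the symmetric hinge $F(x)=G|x|$ makes the drift term $-G\sum_t c_t\,\mathrm{sign}(x_t)$ depend on the query points, and the theorem must cover \emph{every} algorithm satisfying the cone condition: the iterates $x_t$ are arbitrary and noise-adaptive, not SGD iterates, so after observing $\nabla f(x_t,\xi_t)$ an algorithm knows $\xi_t$ exactly and can steer later query signs to offset the accumulated noise by up to $G\sum_t c_t$. In your ``aggressive'' regime this offset capacity is generically much larger than the noise threshold $\tau \asymp G\,\delta^{-1/\alpha}\bigl(\sum_t c_t^\alpha\bigr)^{1/\alpha}$, since $\sum_t c_t \le (T-1)^{(\alpha-1)/\alpha}\bigl(\sum_t c_t^\alpha\bigr)^{1/\alpha}$, so the argument collapses unless $\delta^{-1/\alpha} \gg T^{(\alpha-1)/\alpha}$, which is not assumed. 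Your proposed repair---conditioning on no extreme shock before the dominant one so that ``the trajectory coincides with its noise-free counterpart''---is not true (moderate noise still perturbs every iterate), and in any case there is no trajectory structure to exploit for a general cone-condition algorithm. The paper's proof avoids all of this by making the hard function \emph{asymmetric} (slope $-a$ to the left, flattened minimum on the right), so that $\nabla f(x,\xi)\ge -a+\xi$ holds uniformly in $x$ and no sign tracking is needed, and---the key idea you are missing---by tuning the instance to the algorithm: $a$ and $L$ are chosen as explicit functions of the coefficients $\{\gamma_t\}$, with $a=\frac{(\sum_t\gamma_t^\alpha)^{1/\alpha}}{2\sum_t\gamma_t}\bigl(\frac{1}{4\alpha\delta}\bigr)^{1/\alpha}$, so the maximal rightward progress $a\sum_t\gamma_t$ is exactly half of the $\delta$-probability noise excursion. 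This removes your lazy/aggressive case split and neutralizes any adaptive cancellation, because the deterministic part of the gradient can never pull the output right by more than $a$ per unit coefficient.

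Second, your tail step is incomplete: bounding $\Pr[\max_t c_t|\xi_t|\ge\tau]$ is easy, but upgrading it to a lower bound on the \emph{signed weighted sum} in the adverse direction via a quantitative single-big-jump argument (with infinite-variance remainders, the orientation of the jump, and correlation with the adaptively chosen signs) is exactly the part you leave to an asymptotic principle, and it would at best reproduce the bound up to unspecified constants---not the stated $\frac{\|x_1-x^*\|}{2(T-1)^{(\alpha-1)/\alpha}}\bigl(\frac{1}{4\alpha\delta}\bigr)^{1/\alpha}$ nor the Gamma-ratio threshold on $T$. The paper instead takes the noise to be symmetric $\alpha$-stable (characteristic function $e^{-|s|^\alpha}$), so the weighted sum $\sum_t\gamma_t\xi_t$ has a known scale $(\sum_t\gamma_t^\alpha)^{1/\alpha}$ and its lower tail follows directly from a published bound (Bednorz, Theorem 7), while the Gamma ratio enters simply as the exact $p$-th moment of the stable law. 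So while your high-level ingredients (one-dimensional instance, heavy-tailed additive noise, cone decomposition, H\"older/power-mean step producing $T^{-(\alpha-1)/\alpha}$, and $\delta^{-1/\alpha}$ from an $\alpha$-tail) match the paper, the proposal as written has genuine gaps at the two points above and would not yield the theorem with its stated constants.
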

\end{tcolorbox}
\begin{proof}
We define a one-dimensional problem on $\cX = \R$ and the stochastic gradient oracle: 
\[
\begin{array}{rl@{\hspace{1cm}}rl}
F(x) &=
\begin{cases}
- a \, x & \text{if } x \leq 0 \\
\frac{L}{2} x^2 - a\,x & \text{if } 0\leq x \leq \frac{a}{L} \\
-\frac{a^2}{2\,L}  & \text{if } x \geq \frac{a}{L} ,
\end{cases}
&
\nabla f(x,\xi) &=
\begin{cases}
- a + \xi & \text{if } x \leq 0 \\
L \, x - a + \xi & \text{if } 0\leq x \leq \frac{a}{L} \\
 \xi  & \text{if } x \geq \frac{a}{L} ,
\end{cases}
\end{array}
\]
where $a, L > 0$ will be specified later. We use a random variable $\xi \in \mathbb{R}$ such that for $\alpha > 1$, it has a characteristic function
%$$ \Pr(|\xi| \geq z) = \frac{\alpha}{z^\alpha}, \quad \text{for all } z \geq 1, $$
$$
\Exp{\exp\rb{i \,s \,\xi}} = \exp\rb{-|s|^{\alpha}}.
$$
This distribution is zero-mean and has bounded $p$-th moment for any $p < \alpha.$ Namely, $p$-BCM holds with $\sigma \eqdef \frac{\Gamma(1-p/\alpha)}{\Gamma(1-p)},$ where $\Gamma(x)$ is a Gamma function,
and \hyperref[assum:pBM]{(p-BM)} holds with $G \eqdef 2^{p-1}(a^p + \sigma^p)^{\nicefrac{1}{p}}.$ By the cone assumption, for any $T \geq 1$ there exists a non-negative sequence $\cb{\gamma_t}_{t\geq 1}$ such that
\begin{equation}\label{eq:x_out_unrolled}
x_{T+1}^{out} = x_1  -  \sum_{t=1}^T \gamma_t \nabla f(x_t, \xi_t) \leq  x_1  + a \sum_{t=1}^T \gamma_t  - \sum_{t=1}^{T} \gamma_t\,\xi_t , 
\end{equation}
where in the last inequality we used the fact that $\nabla f(x, \xi) \geq -a + \xi$ for any $x, \xi \in \R$. To establish an in probability lower bound, we set 
$$
a \eqdef \frac{\rb{\sum_{t=1}^T \gamma_t^{\alpha} }^{\nicefrac{1}{\alpha}} }{ 2 \sum_{t=1}^T \gamma_t } \rb{\frac{1}{4 \, \alpha \, \delta}}^{\nicefrac{1}{\alpha}} , \qquad L \eqdef \frac{1}{\sum_{t=1}^T \gamma_t}.  
$$
By independence of $\cb{\xi_t}_{t\geq 1}$ and Theorem 7 in \citep{bednorz2018tails}, we have 
$$
\Pr\rb{\sum_{t=1}^{T} \gamma_t\,\xi_t \geq z} \geq \frac{1}{2} \frac{1}{2 + \frac{\alpha z^{\alpha}}{ \sum_{t=1}^{T} \gamma_t^{\alpha} }} \eqdefright \delta .   
$$
Using $\delta \leq 1/8$, we can bound $z^{\alpha} \geq \frac{1 }{4 \alpha\, \delta } \sum_{t=1}^{T} \gamma_t^{\alpha} $. Therefore, we have with probability at least $\delta$
$$
x_{T+1}^{out} 
\leq  x_1  + a \sum_{t=1}^T \gamma_t - \rb{\frac{1}{4 \, \alpha \, \delta}}^{\nicefrac{1}{\alpha}} \rb{\sum_{t=1}^T \gamma_t^{\alpha} }^{\nicefrac{1}{\alpha}}  = x_1 - a \sum_{t=1}^T \gamma_t,
$$
where in the last step we used the definition of $a.$ Now multiplying both sides with $-a < 0,$ selecting an arbitrary negative starting point $x_1 < 0$ and the optimum closest to the starting point $x^* = \frac{a}{L},$ we have 
\begin{eqnarray*}
F(x_{T+1}^{out}) - F(x^*) &=& - a x_{T+1}^{out} + \frac{a^2}{2 L} 
\geq a \norm{x_1 - x^*} + a^2 \sum_{t=1}^T \gamma_t - \frac{a^2}{2L} \geq  a \norm{x_1 - x^*} ,  
\end{eqnarray*}
where the last step uses the definition of $L.$ It remains to recall the definition of $G,$ and notice that for any $\alpha \geq 1$, $a \geq \frac{1}{2 T^{\frac{\alpha-1}{\alpha}}}  \rb{\frac{1}{4 \, \alpha \, \delta}}^{\nicefrac{1}{\alpha}}  , $ and 
$$
F(x_{T+1}^{out}) - F(x^*) \geq \frac{a\,  G \norm{x_1 - x^*} }{2^{p-1}(a^p + \sigma^p)^{\nicefrac{1}{p}}} \geq \frac{G  \norm{x_1 - x^*} }{2} \min\cb{ 1 ; \frac{a}{\sigma} } = \Omega\rb{ \frac{G \norm{x_1 - x^*} }{ T^{\frac{\alpha - 1}{\alpha}}} \rb{\frac{1}{ \delta}}^{\nicefrac{1}{\alpha}} }. 
$$
\end{proof}

First, we observe that the cone assumption includes a number of first-order algorithms including \algname{SGD} with last iterate output, for $\gamma_t = \eta_t$; \algname{SGD} with simple average output \eqref{eq:HP_upper_via_Markov}, for $\gamma_t = \sum_{k=1}^{t-1} \eta_k / T$. Similarly, the majority of momentum and accelerated schemes can be expressed in this form, including Polyak's momentum \citep{polyak1964some,Gadat_SHB_18}, Nesterov's acceleration \citep{nesterov1,lan2012optimal}, many regularized schemes \citep{lin2018catalyst,allen2018make}. 
In particular, this lower bound shows that the convergence rate of \algname{SGD} will necessarily be multiplied by a polynomial in the inverse failure probability $1/\delta,$ unlike recent results for high probability convergence of \algname{Clip-SGD} \citep{sadiev2023high,liu2023stochastic,zhang2022parameter} and \algname{Normalized-SGD} \citep{hubler2024gradient}. Moreover, the lower bound holds for any $\alpha > p$, which means that it is nearly tight when compared to the best upper bound achieved by \Cref{thm:convex_SGD} in terms of dependence on $T$ and $p$. This lower bound is also remarkably tight in $\delta$ dependence and only leaves a small gap compared to our upper bound \eqref{eq:HP_upper_via_Markov} of order $\delta^{1- \frac{1}{\alpha}}$ in failure probability, which disappears in extremely heavy-tailed regime as $\delta^{1- \frac{1}{\alpha}} \rightarrow 1$ when  $\alpha \rightarrow 1.$ It is worth to mention that previously Sadiev et al. \citep{sadiev2023high} established a high probability lower bound for \algname{SGD} in strongly convex setting using bounded noise, $\Omega\rb{\nicefrac{1}{\sqrt{\varepsilon \delta} }}$, which is not tight for their setting. While their dependence on $1/\delta$ is also polynomial, our construction is different and extends to any algorithm satisfying the cone assumption. 

It is worth noting that \eqref{eq:x_out_unrolled} in the proof of above theorem implies that if the set $\cX$ is unbounded, the expectation $\Exp{F(x_T^{\text{out}})} = + \infty$ for any reasonable output strategy of \algname{SGD}. This means that the bounded diameter assumption in \Cref{thm:convex_SGD} is necessary for any reasonable output strategy to convegence in expectation. While our lower bound uses an unbounded set $\cX = \R$, we believe it is possible to modify our construction allowing a bounded set, e.g., $\cX = [-D_{\cX}, 0]$ and replacing the initial distance to the optimum, $\norm{x_1 - x^*}$, with a sufficiently large diameter, $D_{\cX}.$ This would have allowed us to formally match the upper bound of \algname{SGD} in \eqref{eq:HP_upper_via_Markov}. However, the two main obstacles to extend our proof to the bounded diameter case are (i) generalizing the cone assumption to such constrained setting and (ii) carefully selecting the diameter $D_{\cX}$ to avoid hitting the projection on the left. We believe the second obstacle is manageable when considering the last iterate of projected \algname{SGD} instead of a general class of algorithms and selecting sufficiently large $D_{\cX}$. However, the obstacle (i) seems more challenging.
%({\bf GL: which one of these obstacles are more manageable?}). 

\section{Strongly Convex Setting}\label{sec:SC_SGD}
In this section, we revisit the convergence of (projected) \algname{SGD} for strongly convex functions. We first recall the definition.
\begin{assumption}\label{ass:strongly_convex}
    The objective function $F(\cdot)$ is $\mu$-strongly convex on $\cX \subseteq \R^d$, i.e., $F(\cdot) - \frac{\mu}{2}\sqnorm{\cdot}$ is convex on $\cX$ with $\mu > 0.$
\end{assumption}

Now we are ready to state the main convergence result of strong convexity \algname{SGD} followed by its proof.
\begin{tcolorbox}[colback=white,colframe=mygreen!75!black]	
\begin{theorem}\label{thm:SCprojectedSGD_improved}
	Let Assumptions~\ref{assum:pBM} and \ref{ass:strongly_convex} (\hyperref[assum:pBM]{(p-BM)} and $\mu$-strong convexity) hold with $p\in (1, 2).$ Then the iterates generated by \algname{SGD} with step-size $\eta_t = \frac 2 {\mu \, t}$ satisfy for any $T \geq 1$
	
    $$
     \Exp{ (F(\bar x_T) - F(x^*))^{\nicefrac{p}{2}}  + \rb{\frac{\mu}{2}}^{\nicefrac{p}{2}}  \norm{x_{T+1} - x^*}^p } \leq  \frac{ 8 G^p }{(2-p) \, \mu^{\nicefrac{p}{2}}\,T^{p-1} } .
    $$
    where $\bar x_T$ is sampled uniformly from the iterates $\cb{x_1, \ldots, x_T}.$
\end{theorem}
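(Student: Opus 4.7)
The plan is to extend the convex-case recursion \eqref{eq:cond-recursion_convex} by incorporating strong convexity in two complementary ways, producing simultaneously a contraction on $\mathbb{E}[\|x_t - x^*\|^p]$ and a drop-term in $\mathbb{E}[(F(x_t) - F^*)^{p/2}]$. Concretely, writing $r_t := \|x_t - x^*\|$ and $\Delta_t := F(x_t) - F(x^*)$, the starting recursion (which follows verbatim from the Hölder smoothness of $\|\cdot\|^p$ in \eqref{eq:Holder_convex} and \hyperref[assum:pBM]{(p-BM)}, exactly as in \Cref{thm:convex_SGD}) reads
$$
\CondExp{r_{t+1}^p}{x_t} \;\leq\; r_t^p \;-\; \eta_t\, p\, \frac{\Delta_t}{r_t^{2-p}} \;+\; 2^{2-p}\, \eta_t^p\, G^p.
$$
Strong convexity supplies two lower bounds on the middle term: $\Delta_t/r_t^{2-p} \geq (\mu/2)\, r_t^p$ (from $\Delta_t \geq (\mu/2) r_t^2$) and $\Delta_t/r_t^{2-p} \geq (\mu/2)^{(2-p)/2}\, \Delta_t^{p/2}$ (from $r_t^2 \leq (2/\mu)\Delta_t$). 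I would take the convex combination with weight $\alpha = 1/p$ on the first and $1 - 1/p = (p-1)/p$ on the second, which, after plugging $\eta_t = 2/(\mu t)$, yields the expectation bound
$$
\mathbb{E}[r_{t+1}^p] \;\leq\; \Bigl(1 - \tfrac{1}{t}\Bigr)\mathbb{E}[r_t^p] \;-\; \tfrac{2(p-1)}{\mu\, t}\Bigl(\tfrac{\mu}{2}\Bigr)^{\!(2-p)/2} \mathbb{E}[\Delta_t^{p/2}] \;+\; \tfrac{2^{2-p}(2/\mu)^p G^p}{t^p}.
$$
The choice $\alpha = 1/p$ is the unique one that makes the contraction factor exactly $(1 - 1/t)$, so that multiplying through by $t$ produces a clean telescoping structure and, crucially, leaves a \emph{constant-in-$t$} coefficient in front of $\mathbb{E}[\Delta_t^{p/2}]$, matching the uniform sampling rule for $\bar x_T$.

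Next I would multiply by $t$, telescope from $t = 1$ to $T$ (the initial term $0 \cdot r_1^p$ vanishes, which is why this scheme accommodates any starting point even without bounded domain), and bound $\sum_{t=1}^T t^{-(p-1)} \leq T^{2-p}/(2-p)$ for $p \in (1,2)$. Dividing by $T$ and invoking $\mathbb{E}[(F(\bar x_T) - F^*)^{p/2}] = T^{-1}\sum_{t=1}^T \mathbb{E}[\Delta_t^{p/2}]$ (which holds by the uniform-sampling definition of $\bar x_T$) yields
$$
\mathbb{E}[r_{T+1}^p] + \tfrac{2(p-1)}{\mu}\bigl(\tfrac{\mu}{2}\bigr)^{(2-p)/2}\mathbb{E}[(F(\bar x_T) - F^*)^{p/2}] \;\leq\; \tfrac{2^{2-p}(2/\mu)^p G^p}{(2-p)\, T^{p-1}}.
$$
Finally, I would rescale by $(\mu/2)^{p/2}$: the coefficient in front of the function-value term collapses to $(p-1)$ since $(2-p)/2 + p/2 = 1$, and the right-hand side simplifies to $\tfrac{2^{2-p/2} G^p}{(2-p)\mu^{p/2} T^{p-1}}$. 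Using that both non-negative terms on the left are individually bounded by the right-hand side (with the $(p-1)$ factor absorbed when isolating the function value gap) produces the stated bound with a constant that can be upper bounded by $8$ for $p \in (1,2]$.

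The main obstacle is step (3) — identifying the correct convex combination weight $\alpha = 1/p$: a larger $\alpha$ kills the function-value drop, while a smaller one destroys the $(1 - 1/t)$ telescope. Getting both the clean $t$-weighted telescope \emph{and} the uniform-sampling-compatible coefficient on $\mathbb{E}[\Delta_t^{p/2}]$ hinges on this particular split. A secondary subtlety is that the standard $\alpha = 1/p$ split introduces a $(p-1)$ factor in front of the function-value term; closing the gap to the advertised form requires carrying both terms on the left and only combining them at the end, rather than isolating $\mathbb{E}[(F(\bar x_T) - F^*)^{p/2}]$ prematurely.
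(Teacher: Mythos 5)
Your derivation is sound up to the final rescaling step, but it does not prove the stated bound: the convex-split trick irrevocably loses a factor of $\tfrac{1}{p-1}$ that the theorem does not have. Because the weight $\tfrac1p$ on the quadratic-growth bound is forced (as you note) by requiring the contraction to be exactly $(1-\tfrac1t)$, only the fraction $\tfrac{p-1}{p}$ of the drop term remains for the function value, and your telescoped inequality has the form $(p-1)\,\mathbb{E}[(F(\bar x_T)-F^*)^{p/2}] + (\mu/2)^{p/2}\,\mathbb{E}[\|x_{T+1}-x^*\|^p] \le R$ with $R = \tfrac{4G^p}{2^{p/2}(2-p)\mu^{p/2}T^{p-1}}$. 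Isolating the function-value term then costs a division by $(p-1)$, so the best you can conclude for the sum in the theorem is $\bigl(1+\tfrac{1}{p-1}\bigr)R = \tfrac{p}{p-1}R$. The resulting constant $\tfrac{4}{2^{p/2}}\cdot\tfrac{p}{p-1}$ is \emph{not} bounded by $8$ on $(1,2)$: it already exceeds $8$ around $p\approx 1.4$ and diverges as $p\to 1^+$ (e.g.\ it is about $285$ at $p=1.01$). So the last sentence of your plan is incorrect, and the claimed bound $\tfrac{8G^p}{(2-p)\mu^{p/2}T^{p-1}}$, whose constant is independent of $p-1$, does not follow from this route.

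The missing idea is to avoid paying for the contraction out of the $\Delta_t$ budget at all. The paper's proof applies strong convexity \emph{before} any splitting, via $\langle\nabla F(x_t), x_t - x^*\rangle \ge F(x_t)-F(x^*) + \tfrac{\mu}{2}\|x_t-x^*\|^2$, so the recursion from \eqref{eq:Holder_convex} already contains, additively, both the contraction term $-\tfrac{p\,\eta_t\mu}{2}\|x_t-x^*\|^p$ and the full-strength drop $-p\,\eta_t\tfrac{F(x_t)-F(x^*)}{\|x_t-x^*\|^{2-p}}$. One then only discards benign factors of $p\ge 1$ (weakening the contraction factor to $1-\tfrac{\eta_t\mu}{2}$ and the drop coefficient from $p\,\eta_t$ to $\eta_t$) and converts the drop via quadratic growth exactly as you do, arriving at the coefficient $\eta_t(\mu/2)^{(2-p)/2}$ with no $(p-1)$ in front. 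From that point on, your remaining steps --- multiplying by $t$, telescoping (using $t\eta_t \equiv \tfrac2\mu$ and the vanishing $0\cdot r_1^p$ term), the integral bound on $\sum_{t\le T} t^{-(p-1)}$, the uniform-sampling identity for $\bar x_T$, and the rescaling by $(\mu/2)^{p/2}$ --- coincide with the paper's and yield the stated constant.
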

\end{tcolorbox}

\begin{proof}
We start similarly to our analysis in convex case using Hölder smoothness of $\norm{x}^{p}$ to get 
	\begin{align*}
		\norm{\xtp - x^*}^p 
		{\leq} \norm{\xt - x^*}^p - \eta_t p \frac{\<\nabla f(x_t, \xi_t), \xt - x^* \>}{\norm{\xt - x^*}^{2-p}} + 2^{2-p} \eta_t^p \norm{\nabla f(x_t, \xi_t) }^p .		
	\end{align*}
Next we take the conditional expectation and use strong convexity to derive
 		\begin{align}
		\CondExp{\norm{\xtp - x^*}^p}{\xt}
		&\leq \norm{\xt - x^*}^p - \eta_t p \frac{\frac{\mu}{2} \norm{\xt - x^*}^{2} + F(x_t) - F(x^*) }{\norm{\xt - x^*}^{2-p}} + 2^{2-p} \eta_t^p \CondExp{\norm{\nabla f(x_t, \xi_t) }^p}{\xt} \notag\\
        &= \rb{1 - \frac{p\, \eta_t \mu }{2}} \norm{\xt - x^*}^p - \eta_t p \frac{F(x_t) - F(x^*) }{\norm{\xt - x^*}^{2-p}} + 2^{2-p} \eta_t^p \CondExp{\norm{\nabla f(x_t, \xi_t) }^p}{\xt} \notag \\
        &\leq \rb{1 - \frac{\eta_t \mu }{2}} \norm{\xt - x^*}^p - \eta_t  \rb{\frac{\mu}{2}}^{\frac{2-p}{2}} \rb{ F(x_t) - F(x^*) }^{\nicefrac{p}{2}} \notag \\
        & \qquad + 2^{2-p} \eta_t^p \CondExp{\norm{\nabla f(x_t, \xi_t) }^p}{\xt} ,
        \label{eq:cond-recursion-SC}
	\end{align}
where in the last step we used the quadratic growth condition $F(x) - F(x^*) \geq \frac{\mu}{2} \sqnorm{x-x^*}$ to bound the term in the denominator and the fact that $p \in (1, 2]> 1.$ Define the distance to the optimum by $r_t \coloneqq \|x_t - x^*\|$ and the suboptimality $\Delta_t \coloneqq F(x_t) - F(x^*).$ Then taking the total expectation of \eqref{eq:cond-recursion-SC}, using \hyperref[assum:pBM]{(p-BM)}, and setting the step-sizes as $\eta_t = \frac{2}{\mu\, t},$ we have 
\begin{align*}
\mathbb{E}[r_{t+1}^p] 
&\leq \frac{t-1}{t} \mathbb{E}[r_t^p] - \eta_t \rb{\frac{\mu}{2}}^{\frac{2-p}{2}} \Exp{\Delta_t^{\nicefrac{p}{2}}} + \frac{ 2^{2} G^p}{\mu^p\, t^p } . 
\end{align*}
Multiplying both sides by $t\geq 1$, summing up over $t = 1, \ldots, T$, and rearranging, 
\begin{equation*}%\label{eq:SC-SGD_convergence_both_terms}
  \rb{\frac{\mu}{2}}^{\frac{2-p}{2}}  \sum_{t=1}^T \mathbb{E}[t \, \eta_t \cdot \Delta_t^{\nicefrac{p}{2}} ] + T \, \Exp{ r_{T+1}^p } \leq  \sum_{t=1}^T \frac{ 4 G^p}{\mu^p\, t^{p-1} } \leq  \frac{ 8 G^p}{(2-p) \, \mu^p\,T^{p-2} } .
\end{equation*}
Pluggining in $\eta_t = \frac{2}{\mu\, t}$ and dividing both sides by $ (\nicefrac{\mu}{2})^{\nicefrac{p}{2}}\, T $ we obtain
\begin{equation}%\label{eq:SC-SGD_convergence_both_terms_divided}
 \frac{1}{T} \sum_{t=1}^T \mathbb{E}[\Delta_t^{\nicefrac{p}{2}} ] + \rb{\frac{\mu}{2}}^{\nicefrac{p}{2}} \Exp{ r_{T+1}^p } \leq  \frac{ 8 G^p \rb{\frac{\mu}{2}}^{\nicefrac{p}{2}} }{(2-p) \, \mu^p\,T^{p-1} } . \notag 
\end{equation}
\end{proof}

\paragraph{Discussion.} The above theorem shows that instead of the classical convergence in expectation of the function sub-optimality and the distance to the optimum squared, these quantities should be raised to the power $p/2$ for any $p\in (1, 2).$ This modification of convergence measure is meaningful in heavy-tailed setting and helps to circumvent the non-convergence example shown in \Cref{sec:intro}. Indeed, for that quadratic example we have $\Exp{(F(x_2)- F^*)^{\nicefrac{p}{2}}} = 2^{\nicefrac{p}{2}} \eta_1^p \Exp{\xi_1^p} < \infty$ due to \Cref{assum:pBM}. We remark here that while \Cref{thm:SCprojectedSGD_improved} requires the \textit{non-central} \hyperref[assum:pBM]{(p-BM)} assumption, it can be extended to the central $p$-BCM version when smoothness is available allowing for unbounded domains and the variance reduction effect; we omit this extension for brevity due to the page limit. The convergence rate established in \Cref{thm:SCprojectedSGD_improved} is optimal up to a numerical constant and the above mentioned nuances related to convergence measure. In particular, we can translate our guarantee to in probability result using Markov's inequality, which gives for any fixed probability $\delta \in (0, 1)$ the sample complexity of order $\cO\rb{ \varepsilon^{-\frac{p}{2(p-1)}} }$ (both in function sub-optimality and the squared distance to the optimum). This matches with the lower bound for first-order methods in \cite{zhang2020adaptive}. We remark that the above theorem prescribes to use the classical step-size $\eta_t = \nicefrac{2}{ \mu t}$, which is independent of tail index $p.$ This implies that knowledge of $p$ is not required for achieving the optimal sample complexity. Another interesting aspect of \Cref{thm:SCprojectedSGD_improved} is that the convergence in function value is shown for an iterate uniformly sampled from the trajectory $\cb{x_t}_{t\leq T}$ rather than the average \citep{stich2019unified} or last iterate \citep{zamani2023exact,fontaine2021convergence}. This is unusual for convex/strongly convex optimization where the average iterate convergence is more common \citep{nemirovskij1983problem,lan2020first}. This may imply that the averaging is not efficient without bounded variance and additional sampling from iterates is required to average out the noisy iterates. We provide further experimental study of different output selection strategies for \algname{SGD} in \Cref{sec:experiments}. 

\paragraph{Comparison to prior work.}
We first compare our guarantee to the literature on \algname{Clip-SGD} \cite{zhang2020adaptive,liu2023stochastic} (method is described in the previous section). In strongly convex setting, their recommendation for step-size is $\eta_t = \frac{4}{\mu (t+1)}$ and for clipping threshold sequence is $\lambda_t  = \max\cb{ 2 \max_{x\in \cX} \norm{\nabla F(x)}, G \, t^{1/p} }$, see e.g., Theorem 9 in \cite{liu2023stochastic}.  They obtain the rate in expected function sub-optimality
\begin{equation}\label{eq:rate_ClipSGD}
\Exp{F(\widetilde x_T) -  F^* + \mu \sqnorm{x_T - x^*}} = \cO\rb{ \frac{G^2}{\mu (T+1)^{\frac{2(p-1)}{p}} }  } 
\end{equation}
for some weighed average point $\widetilde x_T$. There are two main differences compared to our result. First, our convergence rate is established for vanilla \algname{SGD} without clipping. \algname{Clip-SGD} has two parameter sequences which are important to tune, and the clipping threshold depends on the tail index $p$ to achieve the optimal rate. In comparison, our \algname{SGD} only has the standard step-size $\eta_t = \frac{2}{\mu t},$ which is easy to implement when the strong convexity modulus is known. The second difference is in the convergence criterion used. Our convergence criterion, $\Exp{ (F(\bar x_T) - F(x^*))^{\nicefrac{p}{2}}  + \rb{\frac{\mu}{2}}^{\nicefrac{p}{2}}  \norm{x_{T+1} - x^*}^p },$ can be weaker than the one for \algname{Clip-SGD} in \eqref{eq:rate_ClipSGD}. This means that our new analysis allows to get rid of additional clipping threshold hyper-parameter at the price of a slightly weaker convergence measure. 

Now we compare our result to \cite{wang2021convergence}, which studies convergence of vanilla \algname{SGD} under a similar infinite variance assumption. Their step-size sequence choice is arbitrarily close to the harmonic decay, i.e., $\eta_t = \eta_1/t^{a}$ for any $a\in (0,1), $ and the convergence rate is presented as $\Exp{\norm{x_t - x^*}^p} \leq \frac{C(d, L, \mu, \sigma)}{t^{a(p-1)}},$ where the constant $C(d, L, \mu, \sigma)$ hides the dependence on dimension, variance parameter, smoothness and strong convexity parameters. While this result looks similar to our \Cref{thm:SCprojectedSGD_improved}, unfortunately, it has several important limitations. First, their analysis assumes that $F(\cdot)$ is twice differentiable function with a uniformly bounded spectral norm of the Hessian matrix $\nabla^2 F(x)$ ($L$-smoothness). Second, they make an additional non-standard assumption about the uniform $p$-positive definiteness of the Hessian matrix. To explain this concept, we let $p \geq 1$ and $\mathbf{Q}$ be a symmetric matrix. Define the signed power of a vector $\mathbf{v} \in \mathbb{R}^d$ as:
$
\mathbf{v}^{\langle q \rangle} := (\operatorname{sgn}(v^1)|v^1|^q, \dots, \operatorname{sgn}(v^d)|v^d|^q)^\top.
$
Let $S_p = \{\mathbf{v} \in \mathbb{R}^d : \|\mathbf{v}\|_p = 1\}$ be the unit sphere in $\norm{\cdot}_p$ norm. We say that $\mathbf{Q}$ is \emph{$p$-positive definite} if for all $\mathbf{v} \in S_p$,
$
\mathbf{v}^\top \mathbf{Q} \mathbf{v}^{\langle p - 1 \rangle} > 0.
$
In particular, in the limit case case $p\rightarrow1$ the assumption reduces to the diagonal dominance of the Hessian. Even simple functions such as $F(x) = x^\top \begin{pmatrix} 0.02 & -1 \\ -1 & 50.02 \end{pmatrix} x$ violated this assumption for all $p \leq 1.8$. Another limitation of this work is that the convergence rates suffers from a polynomial dimension dependence, making it unscalable with $d$ even when the main problem constants (such as smoothness constant, strong convexity parameter and the variance) are dimension independent.

\section{Non-Convex Setting}\label{sec:NC_SGD}
In this section, we will establish convergence rates for non-convex \algname{SGD} under \hyperref[assum:pBM]{(p-BM)}, and will construct a hard instance function and gradient oracle to show tightness of our upper bound. Before we proceed with our main result, we need to introduce some notions and additional assumptions for the non-convex case. First, we assume the function is lower bounded, which is a standard assumption in this setting. 
\begin{assumption}\label{ass:lower_bounded_constr}
	The objective function $F: \cX \rightarrow 
	\R$ is lower bounded by $F^* > -\infty$ on $\cX$.
\end{assumption}
%\begin{assumption}\label{ass:l_smooth}
%	The objective function $F$ is $L$-smooth, i.e.\ $F$ is differentiable and for all $x,y\in\R^d$ we have $\norm{\nabla F(x) -\nabla F(y)} \leq L \norm{x-y}$.
%\end{assumption}
Our previous sections worked in non-smooth settings and we want to continue this exposition to be consistent with previous results. However, it is known that a general non-smooth non-convex optimization is intractable for \algname{SGD} and other randomized schemes are required even in the absence of heavy-tailed noise \citep{jordan2023deterministic}. To find a compromise between consistency and intractability, we will work with weakly Hölder convex and Hölder smooth functions \citep{drusvyatskiy2019efficiency,yashtini2016global,dvurechensky2017gradient}. %We assume that the curvature of $F(\cdot)$ is lower bounded with some finite constant $-\ell.$ Such lower curvature bound is usually referred to as weak convexity \cite{drusvyatskiy2019efficiency}. Besides, we also assume that an upper bound on the curvature $L$ exists, which is often referred to as smoothness. 
\begin{assumption}[Hölder smoothness]\label{ass:smooth}
	Let $\nu \in [1,2]$ and $\ell_\nu, L_\nu > 0$. The objective function $F: \cX \rightarrow \R$ is $(\ell_\nu, L_\nu, \nu)$-\textit{Hölder smooth} with curvature exponent $\nu$ on $\cX $, i.e.,
	$$
	- \frac{\ell_\nu}{\nu} \norm{x-y}^{\nu} \leq F(x) - F(y) - \langle \nabla F(y), x - y \rangle \leq  \frac{L_\nu}{\nu} \norm{x-y}^{\nu} \qquad \text{for all } x, y \in \cX.  
	$$
    In the case $\nu = 2,$ we say $F(\cdot)$ is $(\ell, L)$-\textit{smooth} or simply smooth. 
\end{assumption}
The upper curvature constant $L_\nu$ for any $\nu \in (1,2]$ can potentially be infinity and, in principle, we can handle such case as well using a suitable convergence measure, see e.g., \citep{Davis_Drusvyatskiy_2018}. However, we will focus on the case $L_\nu < \infty,$ since in this case we can define a meaningful convergence criterion, which automatically recovers the gradient norm squared in the unconstrained setting. Note that the Hölder smoothness with $\nu < 2$ is weaker than standard smoothness (corresponding to $\nu = 2$) whenever the set $\cX$ is bounded. Specifically, if smoothness holds with $L = L_2$, then Hölder smoothness holds with $L_\nu = \nu L_2 D_{\cX}^{2-\nu} / 2.$ Thus, using \Cref{ass:smooth} with $\nu = p \in (1, 2]$ is not limiting in the constrained case if our main focus is the dependence on $\varepsilon$ and $p,$ but it may not guarantee the tightest possible bound in smoothness constant $L=L_2$ if the function is smooth. 

Now we introduce some useful definitions needed for our analysis. For any $F : \cX \rightarrow \R $ and a real $\rho> 0$, the Moreau envelope and its associated proximal operator are defined respectively by 
$$
F_{1/\rho}^{\nu}(x) := \min_{y\in \cX} \sb{ F(y) +  \frac{\rho}{\nu} \norm{y - x}^{\nu} } , \qquad 
\hat x^{\nu} := \argmin_{y\in \cX} \sb{ F(y) + \frac{\rho}{\nu} \norm{y - x}^{\nu}  } \quad \text{for any } x \in \cX .
$$
In our analysis of \algname{SGD} we will use Hölder smoothness \Cref{ass:smooth} and Moreau envelope with $\nu = p.$ Later, in the analysis of \algname{Mini-batch SGD} we will use the above concepts with $\nu = 2.$ In case $\nu = 2,$ we will omit the superscript and denote $F_{1/\rho}(\cdot) := F_{1/\rho}^{2}(\cdot),$ $\hat x := \hat x^{2} .$

\paragraph{Convergence criterion.}
We measure progress in the non-convex setting using a generalized stationarity measure based on \textit{Forward-Backward Envelope (FBE)} \cite{fatkhullin2024taming}, which is tailored to possibly constrained and non-smooth settings.
\begin{definition}[Forward-Backward Envelope and Generalized Stationarity Measure]\label{def:FBE_GSM}
Let $F : \cX \to \mathbb{R}$ be differentiable and satisfy \Cref{ass:smooth} with a curvature exponent $\nu > 1$. Let $\rho > 0$ be a regularization parameter. The Forward-Backward Envelope (FBE) of order $\nu$ at a point $x \in \cX$ is defined as
\begin{equation}\label{eq:FBE_order_nu}
\mathcal{D}_{\rho}^{\nu}(x) := - \frac{\nu \, \rho^{\frac{1}{\nu-1}} }{\nu - 1} \min_{y \in \cX} Q_\rho^{\nu}(x, y), \qquad
Q_\rho^{\nu}(x, y) := \langle \nabla F(x), y - x \rangle + \frac{\rho}{\nu} \norm{y - x}^{\nu}.
\end{equation}
We define the associated \emph{stationarity measure}
\begin{equation}\label{eq:stat_measure}
S_{\rho}^{\nu}(x) := \left( \mathcal{D}_{\rho}^{\nu}(x) \right)^{\frac{2(\nu - 1)}{\nu}}.
\end{equation}
\end{definition}
In the unconstrained Euclidean case (i.e., $\cX = \mathbb{R}^d$ or when projection is inactive), the FBE reduces to the gradient norm to the appropriate power:
$
\mathcal{D}_{\rho}^{\nu}(x) = \norm{\nabla F(x)}^{\frac{\nu}{\nu - 1}}.
$ Thus our stationarity measure $S_{\rho}^{\nu}(x)$ reduces to the classical notion $S_{\rho}^{\nu}(x) = \|\nabla F(x)\|^2 $ for any $\nu > 1$ when $\cX = \mathbb{R}^d$. On the other hand when the constraint $\cX$ is active, but the objective is smooth with $\nu = 2,$ we can relate this stationarity measure with other well established notions, e.g., it is known that $S_{\rho}^{2}(x) = \mathcal{D}_{\rho}^{\nu}(x) \geq 2 \rho^2 \sqnorm{x - x^+}, $ where $x^+ \eqdef \argmin_{y \in \cX} \langle \nabla F(x), y \rangle + \rho \norm{y - x}^{2},$ see, e.g., \citep{fatkhullin2024taming}.

\subsection{Upper Bound for SGD}\label{subsec:upper_NC_SGD}
Now we are set to present our convergence result for non-convex \algname{SGD} under \hyperref[assum:pBM]{(p-BM)}. 
\begin{tcolorbox}[colback=white,colframe=mygreen!75!black]	
\begin{theorem}\label{thm:NC_SGD_upper}
	Let Assumptions~\ref{assum:pBM}, \ref{ass:lower_bounded_constr}, and \ref{ass:smooth} (\hyperref[assum:pBM]{(p-BM)}, lower bounded and $(\ell_p, L_p, p)$-Hölder smooth) hold, denote $\Delta_1 \eqdef F(x_1) - F^*.$ Then for any $T\geq 1$ \algname{SGD} with arbitrary step-size sequence $\cb{\eta_t}_{t\geq 1}$ satisfies
	$$
	\frac{ 1 }{\sum_{t=1}^T \eta_t} \sum_{t=1}^T  \eta_t \Exp{ S_{\rho+L_p}^p(x_t) } \leq 16 \rb{\frac{p}{p-1}}^{\gamma} \frac{ \Delta_1 + 2 \, \rho \, G^p \sum_{t=1}^{T} \eta_t^p }{ \sum_{t=1}^T  \eta_t  } ,
	$$
    where $\gamma \eqdef \frac{2(p-1)}{p},$ and $\rho \eqdef \frac{2(L_p + 2\,\ell_p)}{p-1}. $
\end{theorem}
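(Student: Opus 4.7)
The plan is to establish a one-step descent inequality for the function value $F(x_t)$ whose progress term is exactly $S_{\rho+L_p}^p(x_t)$, and then telescope over $t = 1, \dots, T$. The starting point is the upper bound in the Hölder smoothness \Cref{ass:smooth},
\begin{equation*}
F(x_{t+1}) \leq F(x_t) + \langle \nabla F(x_t), x_{t+1} - x_t \rangle + \frac{L_p}{p}\|x_{t+1} - x_t\|^p ,
\end{equation*}
whose last term is controlled by $\frac{L_p}{p}\eta_t^p G^p$ in conditional expectation via non-expansiveness of the projection and \hyperref[assum:pBM]{(p-BM)}, mirroring the noise term that appears in the standard smooth non-convex SGD analysis.

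The main step is to turn the inner product $\langle \nabla F(x_t), x_{t+1} - x_t \rangle$ into a negative contribution proportional to $S_{\rho+L_p}^p(x_t)$. I would introduce the auxiliary prox point $\hat{x}_t := \arg\min_{y \in \cX} Q_{\rho + L_p}^p(x_t, y)$ and decompose $x_{t+1} - x_t = (\hat{x}_t - x_t) + (x_{t+1} - \hat{x}_t)$. The first piece is exactly the FBE contribution by \Cref{def:FBE_GSM}, namely it reproduces $-\frac{p-1}{p(\rho+L_p)^{1/(p-1)}} \mathcal{D}_{\rho+L_p}^p(x_t)$ up to a $\|\hat{x}_t - x_t\|^p$ term. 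The second piece -- a geometric mismatch between the $L_p$-prox step $\hat{x}_t$ and the $L_2$-projected SGD step $x_{t+1}$ -- is controlled by combining the projection variational inequality $\langle x_{t+1} - x_t + \eta_t \nabla f(x_t, \xi_t), \hat{x}_t - x_{t+1}\rangle \geq 0$ with the first-order condition for $\hat{x}_t$ and the lower Hölder smoothness (which is where $\ell_p$ enters). Young's inequality with conjugate exponents $p$ and $p/(p-1)$ absorbs the cross terms, and the specific choice $\rho = 2(L_p + 2\ell_p)/(p-1)$ is calibrated precisely so that the positive $\ell_p$- and $L_p$-corrections generated along the way are dominated by the negative FBE term. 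The exponent $\gamma = 2(p-1)/p$ relating $S$ to $\mathcal{D}$ emerges from this Young application, and the accumulated constants produce the prefactor $16\, (p/(p-1))^{\gamma}$.

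After taking conditional expectation and using $\mathbb{E}[\nabla f(x_t, \xi_t) \mid \mathcal{F}_{t-1}] = \nabla F(x_t)$ to annihilate the stochastic cross-terms, I expect a one-step inequality of the form
\begin{equation*}
\mathbb{E}[F(x_{t+1}) \mid \mathcal{F}_{t-1}] \leq F(x_t) - c\, \eta_t\, S_{\rho+L_p}^p(x_t) + C\, \rho\, \eta_t^p G^p ,
\end{equation*}
with absolute constants $c, C$. Telescoping this from $t=1$ to $T$ and using $\mathbb{E}[F(x_{T+1})] \geq F^\ast$ gives $\sum_t \eta_t \mathbb{E}[S_{\rho+L_p}^p(x_t)] \leq c^{-1}(\Delta_1 + C \rho G^p \sum_t \eta_t^p)$, which is the claim after dividing by $\sum_t \eta_t$. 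The hardest part is Step~2 above: bridging the $L_2$-geometry of the projection defining $x_{t+1}$ and the $L_p$-geometry of the FBE. The delicate point is verifying that $\rho = 2(L_p + 2\ell_p)/(p-1)$ is exactly large enough for the errors incurred while transferring information from $\hat{x}_t$ to $x_{t+1}$ to be absorbed, while still leaving the correct power $\gamma$ on the FBE so that the progress term matches the definition $S = \mathcal{D}^{\gamma}$.
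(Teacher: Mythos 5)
There is a genuine gap, and it sits exactly at the step you flag as the hardest. Your plan descends on $F(x_t)$ itself and relies on ``taking conditional expectation and using $\mathbb{E}[\nabla f(x_t,\xi_t)\mid\mathcal F_{t-1}]=\nabla F(x_t)$ to annihilate the stochastic cross-terms.'' In the constrained case covered by the theorem this fails: $x_{t+1}=\Pi_{\cX}(x_t-\eta_t\nabla f(x_t,\xi_t))$ depends \emph{nonlinearly} on $\xi_t$, so terms such as $\langle\nabla F(x_t),x_{t+1}-\hat x_t\rangle$ (and any cross-term pairing the noise with $x_{t+1}-\hat x_t$) do not vanish in expectation. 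If you instead try to bound them, you face two bad options: Cauchy--Schwarz/Young with the displacement on the conjugate side produces a positive term of the same order as the FBE decrease you are trying to keep (since $\hat x_t - x_t$ is of FBE size while $x_{t+1}-x_t=O(\eta_t)$, the mismatch term is not small), and putting the noise on the conjugate side yields $\|\psi_t\|^{p/(p-1)}$ with $p/(p-1)\ge 2\ge p$, whose expectation can be infinite under \hyperref[assum:pBM]{(p-BM)} with $p<2$. This is precisely the obstruction the paper itself points out when explaining why vanilla (unbatched) SGD could not be analyzed this way under standard smoothness: the analogue of your mismatch term (the term (B) in the Mini-batch proof) need not have finite expectation. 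Your sketch is fine in the special case $\cX=\R^d$, where the descent lemma plus unbiasedness directly gives $-\eta_t\|\nabla F(x_t)\|^2$ and the noise only enters at power $p$; but that does not prove the stated theorem.

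The paper's proof avoids this by changing the potential: it tracks the order-$p$ Moreau envelope $\Delta_{t,p}=F^p_{1/\rho}(x_t)-F^*$ rather than $F(x_t)$. Using the suboptimality of the \emph{previous} prox point $\hat x_t^p$ in the minimization defining $F^p_{1/\rho}(x_{t+1})$, non-expansiveness of the projection applied \emph{inside} the $p$-th power distance, and the Hölder smoothness of $\|\cdot\|^p$ (the same Rodomanov--Nesterov inequality as in the convex section), one gets a recursion in which (i) the noise appears only as $\eta_t^p\|\nabla f(x_t,\xi_t)\|^p$, bounded in expectation by \hyperref[assum:pBM]{(p-BM)}, and (ii) the only stochastic cross-term multiplies the $\mathcal F_{t-1}$-measurable vector $\hat x_t^p-x_t$ (scaled by $\|\hat x_t^p-x_t\|^{p-2}$), so unbiasedness genuinely kills it. The deterministic inner product $\langle\nabla F(x_t),\hat x_t^p-x_t\rangle$ is then converted to $-\,C\,\cD^p_{\rho+L_p}(x_t)$ via the descent-type \Cref{le:descent_FBE}, and the denominator $\|\hat x_t^p-x_t\|^{2-p}$ is controlled by \Cref{le:PM_FBE_connection}, which is where $\ell_p$ and the choice $\rho=\frac{2(L_p+2\ell_p)}{p-1}$ enter and where the exponent $\gamma=\frac{2(p-1)}{p}$ arises. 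To repair your argument you would need to replace the descent on $F$ by this envelope-based potential (or supply a genuinely new mechanism for the constrained, heavy-tailed cross-terms), not merely tune $\rho$.
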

\end{tcolorbox} 
\begin{proof}
We start with definition of Moreau envelope and the optimality of $\hat{x}_{t+1}^p$:
\begin{align}
		F_{1/\rho}^p  \left(x_{t+1}\right)  &= F\left(\hat{x}_{t+1}^p\right) + \frac{\rho}{2}\norm{\hat{x}_{t+1}^p - x_{t+1}}^p  \nonumber \\
		&\leq F\left(\hat{x}_{t}^p\right) + \frac{\rho}{2}\norm{ \hat{x}_{t}^p - x_{t+1} }^p  \nonumber\\
        &= F\left(\hat{x}_{t}^p\right) + \frac{\rho}{2}\norm{ \Pi_{\cX}(\hat{x}_{t}^p) - \Pi_{\cX}(x_{t} - \eta_t \nabla f(x_t, \xi_t)) }^p  \nonumber\\
        &\leq F\left(\hat{x}_{t}^p\right) + \frac{\rho}{2}\norm{ ( \hat{x}_{t}^p - x_t) + \eta_t \nabla f(x_t, \xi_t) }^p  \nonumber ,
        \end{align}
        where in the last inequality above we used non-expansiveness of projection. Next, similarly to convex case we use the Hölder smoothness of $\norm{\cdot}^p$:
        \begin{align}
        F_{1/\rho}^p  \left(x_{t+1}\right) &\leq F\left(\hat{x}_{t}^p\right) + \frac{\rho}{2}\norm{ x_t - \hat{x}_{t}^p }^p + \frac{\rho \, p \, \eta_t}{2} \frac{\< \hat{x}_t^p - x_t ,  \nabla f(x_t, \xi_t) \>}{\norm{\hat{x}_t^p - x_t}^{2-p}} + \frac{2^{2-p}\, \rho\, \eta_t^p}{2} \norm{ \nabla f(x_t, \xi_t) }^p \nonumber \\ 
        &= F_{1/\rho}^p  \left(x_{t}\right) + \frac{\rho \, p \, \eta_t}{2} \frac{\< \hat{x}_t^p - x_t ,  \nabla f(x_t, \xi_t) \>}{\norm{\hat{x}_t^p - x_t}^{2-p}} + \frac{2^{2-p}\, \rho\, \eta_t^p}{2} \norm{ \nabla f(x_t, \xi_t) }^p \nonumber.
\end{align}
On the other hand, using \Cref{ass:smooth} and \Cref{le:descent_FBE}, we have for $\rho > \ell_p$
\begin{eqnarray}
 \langle \nabla F(x_t), \hat x_t^p - x_{t}\rangle &\leq&  F(\hat x_t^p) - F(x_t) + \frac{\ell_p}{p}\norm{\hat x_t^p - x_t }^p \notag \\
		&=& F_{1/\rho}^p(x_t) - F(x_{t}) + \frac{\ell_p - \rho }{p} \norm{\hat x_t^p - x_t }^p  \notag 
		 \leq - C \, \cD_{\rho+L_p}^{p}(x_t), %- \frac{\rho - \ell }{2} \norm{\hat x_t - x_t }^p . 
	\end{eqnarray}
where we denote $C \eqdef \fr{p-1}{p \, (\rho + L_p)^{\frac{1}{p-1}}}.$ Combining the above two inequalities, and defining $\psi_t \eqdef \nabla f(x_t, \xi_t) - \nabla F(x_t)$, $\Delta_{t, p} \eqdef F_{1/\rho}^p(x_t) - F^*$, we attain 
 \begin{equation}\label{eq:main_recursion_Nonconvex_SGD}
        \Delta_{t+1, \,p} \leq \Delta_{t, \,p} - \frac{\rho \, p \, \eta_t \, C}{ 2 } \frac{\cD_{\rho + L_p}^p(x_t)}{\norm{\hat{x}_t^p - x_t}^{2-p}}  + \frac{2^{2-p}\, \rho\, \eta_t^p}{2} \norm{ \nabla f(x_t, \xi_t) }^p + \frac{\rho \, p \, \eta_t}{2} \frac{\< \hat{x}_t^p - x_t , \psi_t \>}{\norm{\hat{x}_t^p - x_t}^{2-p}} .
\end{equation}
It follows from \Cref{le:PM_FBE_connection} that for any $\rho > \frac{L_p + 2\,\ell_p}{p-1} > \ell_p$ 
$$ 
\norm{\hat x_t^p - x_t}^{p} \leq \frac{C}{\rho - \frac{L_p + 2\,\ell_p}{p-1}}  \cD_{\rho+L_p}^{p}(x_t) .
$$ 
Thus we can upper bound the denominator of the negative term in \eqref{eq:main_recursion_Nonconvex_SGD}. 
\begin{equation*}
        \Delta_{t+1, \,p} \leq \Delta_{t, \,p} - \frac{\rho \, \eta_t}{2} \rb{\rho - \frac{L + 2\,\ell}{p-1}}^{\frac{2-p} p}  \rb{C\,\cD_{\rho+L_p}^p(x_t)}^{\frac{2(p-1)}{p}}  + 2\, \rho\, \eta_t^p  \norm{ \nabla f(x_t, \xi_t) }^p + \frac{\rho \, p \, \eta_t}{2} \frac{\< \hat{x}_t^p - x_t , \psi_t \>}{\norm{\hat{x}_t^p - x_t}^{2-p}} .
\end{equation*}  
Telescoping, taking the total expectation, using \Cref{assum:pBM} and rearrange the bound with $\rho = \frac{2(L_p + 2\,\ell_p)}{p-1}$ yields
\begin{align*}
    \pare{\frac{\rho}{2}}^{\frac 2 p} C^{\frac{2(p-1)}p}\sum_{t=1}^T \eta_t S_{\rho + L_p}^p(x_t) \leq \Delta_{1,\, p} + 2 \rho G^p \sum_{t=1}^T \eta_t^p,
\end{align*}
where we used $S_{\rho+L}^p(x_t) = (\mathcal{D}_{\rho}^{p}(x))^{\frac{2(p-1)}{p}}$. Noticing that $\Delta_{1, \,p} \leq \Delta_1 = F(x_t) - F^*,$ and $\pare{\frac \rho 2}^{\frac 2 p} C^{\frac{2(p-1)}{p}} \geq \pare{\frac{p-1}p}^{\frac{2(p-1)}p} \frac 1 {16}$ yields the claim.
\end{proof}

\textbf{Discussion.} In the special case when $\cX = \R^d,$ we recall that $S_{\rho}^p(x) = (\mathcal{D}_{\rho}^{p}(x))^{\frac{2(p-1)}{p}}  = \sqnorm{\nabla F(x)} $ for any $\rho > 0$ and any $p\in (1, 2].$ Therefore, the above theorem implies in the unconstrained case 
$$
	 \Exp{ \sqnorm{\nabla F(\bar x_T)} } \leq 16 \rb{\frac{p}{p-1}}^{\gamma} \frac{ \Delta_1 + 2 \, \rho \, G^p \sum_{t=1}^{T} \eta_t^p }{ \sum_{t=1}^T  \eta_t  } = \cO\rb{\frac{(\ell_p + L_p)^{\nicefrac{1}{p}} \Delta_1^{\frac{p-1}{p}} G }{T^{\frac{p-1}{p}}}},
$$
where the last equality holds setting $\eta_t = \sqrt[p]{\Delta_1}/ G \sqrt[p]{\rho \, T}$ and $\bar x_T$ is sampled from the iterates $\cb{x_t}_{t\leq T}$ either uniformly or with probabilities proportional to $\eta_t.$ In any of these cases, perhaps surprisingly \algname{SGD} converges in the standard measure for unconstrained non-convex optimization: expectation of the gradient squared. We remark that while our Hölder smoothness \Cref{ass:smooth} is weaker that standard smoothness (with $\nu = 2$) on any compact set $\cX$, there is still no contradiction with the folklore quadratic example discussed in \Cref{sec:intro}. The crux is that to include the quadratic example $F(x) = \frac{1}{2}x^2$ in our function class satisfying $(0, L_p, p)$-Hölder smoothness, we need to assume a bounded domain, which gives $L_p \leq p D_{\cX}^{2-p} / 2.$ This means the derivation in \eqref{eq:quadratic_example} is not valid anymore due to the projection on the bounded set $\cX,$ and the projected \algname{SGD} actually converges in terms of expectation of gradient norm squared.

Now we consider different step-size strategies. If we use the classical parameter agnostic step-sizes $\eta_t = 1/\sqrt{t}$ \citep{yang2023two_sides}, then after $T = \cO\rb{\varepsilon^{-\frac{4}{p-1}}}, $ \algname{SGD} finds a point $x \in \cX$ with $\mathbb E[\sqnorm{\nabla F(x)} ] \leq \varepsilon^2.$ If we use the $p$-dependent step-size, e.g.,  $\eta_t = \sqrt[p]{\Delta}/ G \sqrt[p]{\rho \, t}$, the sample complexity improves to $ \widetilde{\cO}\rb{\varepsilon^{-\frac{2p}{p-1}}}. $ We will show in the next section that both above mentioned complexities are tight for \algname{SGD} under our exact assumptions up to a numerical constant. 

\textbf{Comparison to prior work.} To the best of our knowledge, there are no algorithms in the literature working precisely under the same assumptions as our \Cref{thm:NC_SGD_upper}. However, we can compare our result to other algorithms, which use the standard smoothness and the bounded central moment assumptions. We first compare to \algname{Clip-SGD} \citep{zhang2020adaptive,sadiev2023high}. Under smoothness and $p$-BCM, the typical recommended order of the step-size and clipping thresholds \algname{Clip-SGD} are $\eta_t = \eta_1 / t^{\frac{p}{3p-2}}$, $\lambda_t = \lambda_1 \, t^{\frac{1}{3p-2}}$. The above mentioned works derive the sample complexity of order $\cO\rb{\varepsilon^{-\frac{3p-2}{p-1}}}, $ which is smaller that our best possible result $ {\cO}\rb{\varepsilon^{-\frac{2p}{p-1}}} $ for \algname{SGD} with $\eta_t = \eta_1 / \sqrt[p]{T}$ step-size when $p < 2$, and matches when $p = 2.$ Several other adaptive methods achieve sample complexities similar to $\cO\rb{\varepsilon^{-\frac{3p-2}{p-1}}}.$ For example, \algname{Normalized SGD} with mini-batch or momentum \citep{hubler2024gradient,liu2024nonconvex}, and \algname{Normalized SGD} with gradient clipping and momentum \citep{cutkosky2021high}. However, all these algorithms are more complex than vanilla \algname{SGD}, rely on smoothness assumptions, are limited to unconstrained setting, and, similar to \algname{Clip-SGD}, require at least two hyper-parameters to achieve reduced sample complexity For example, \algname{Normalized SGD} with momentum \citep{hubler2024gradient} requires step size $\eta_t = \eta_1 \sqrt{\alpha_t / t}$ and momentum $\alpha_t = \alpha_1 / t^{\frac{p}{3p - 2}}$. 

In summary, we prove for the first time that non-convex \algname{SGD} converges under heavy-tailed noise. Our complexity bound is worse in dependence on $\varepsilon$ than those discussed above in the standard smooth setting for $p\in(1,2)$, but the assumptions and the algorithms are different. This discrepancy can be caused by three potential possibilities:
\begin{itemize}
    \item[i.] Our analysis may be not tight. 
    \item[ii.] Complexity of first-order methods under \Cref{ass:smooth} with $\nu = p$ is strictly worse than for smooth (i.e., $\nu = 2$).
    \item[iii.] \algname{SGD} is inherently slower than other adaptive methods such as \algname{Clip-SGD} even if $\nu = 2.$
\end{itemize} 
In the next section we essentially exclude the first possibility i., by constructing an algorithm-specific lower bound for \algname{SGD} with arbitrary polynomial step-sizes. This will show that our upper bound in \Cref{thm:NC_SGD_upper} are in fact tight under our assumption. We leave the exploration of scenarios ii. and iii. for future work. 

% --------------------------------------------------------
% Lower Bounds for SGD
% --------------------------------------------------------

\subsection{Lower-Bound for SGD}\label{sec:nonconvex.lower_bound}
When choosing polynomially decaying stepsizes $\eta_t = \eta \, t^{-r}$, $\eta > 0, r \in [0, 1)$, \Cref{thm:NC_SGD_upper} implies a sample complexity of
\begin{align}\label{eq:nonconvex.lower_bound.sgd_polynomial_stepsize_sample_complexity_upper_bound}
    T = \tilde{\Oc}\pare{
        \pare{\frac{\Delta_1}{\eta \eps^2}}^{\frac{1}{1-r}}
        + \eta^{\frac 1 r} \pare{\frac{L_\nu G^p}{\eps^2}}^{\frac 1 {r(p-1)}}
    }
\end{align}
to reach an $\eps$-stationary point, i.e., $\Exp{S_{\rho}^p} \leq \varepsilon$. As the previously established lower-bounds in the literature do not cover our set of assumptions, we derive the tightness of this result in all parameters in the following Theorem.
\begin{tcolorbox}[colback=white,colframe=mygreen!75!black]	
\begin{theorem}\label{thm:nonconvex.lower_bound.SGD_lower_bound}
    Let $p, \nu \in (1, 2], \Delta_1, L_\nu \geq 0, 0 \leq \sigma \leq G, \eps > 0$ and $\eta > 0, r \in [0, 1)$. Assume $\eps < \frac G 2$ and $\eps^{\frac{\nu} {\nu-1}} \leq \frac{\nu} {\nu-1} \frac{\Delta_1}{4}\pare{\frac{L_\nu}{2}}^{\frac 1 {\nu-1}}$.\footnotemark\ Then, for any dimension $d \in \Nb_{\geq 1}$, there exist 
    \begin{enumerate}
        \item[$a$)] convex, $(0, L_\nu, \nu)$-smooth and $G$-Lipschitz functions $F_1, F_2 \colon \R^d \to \R$ with $F_i(x_1) - \inf_{x \in \R} F_i(x) \leq \Delta_1$, and
        \item[$b$)] gradient oracles $\nabla f_i(x, \xi)$, $i \in \set{1,2}$, that satisfy \hyperref[assum:pBM]{(p-BM)} and \hyperref[assum:pBCM]{(p-BCM)}
    \end{enumerate} 
    such that \algname{SGD} with stepsizes $\eta_t = \eta \, t^{-r}$ almost surely requires at least
    \begin{align}\label{eq:nonconvex.lower_bound.SGD_lower_bound.lower_bound}
        T \geq \max\set{\pare{\frac{\pare{1-r}\Delta_1}{8 \eta \eps^2}}^{\frac 1 {1-r}}, 
            \eta^{\frac 1 r}
            \pare{\frac{\sigma^p}{2^pL_\nu}}^{\frac 1 {r\pare{p-1}}}
            \pare{\frac{L_\nu}{2\eps}}^{\frac{p - 1 + \nu - 1}{r\pare{p-1}\pare{\nu-1}}}
        }
    \end{align}
    iterations to reach an $\eps$-stationary point in the worst case. In other words, for all $T$ that do not satisfy the above inequality, $\min_{t \in [T]} \norm{\nabla F_1(x_t)} > \eps$ or $\min_{t \in [T]} \norm{\nabla F_2(x_t)} > \eps$ almost surely. In particular, the $\eps$-dependence is given by
    \begin{align*}
        T = \Omega \pare{\eps^{-\frac 2 {1-r}} + \eps^{-\frac{p - 1 + \nu - 1}{r\pare{p-1}\pare{\nu-1}}}}.
    \end{align*}
\end{theorem}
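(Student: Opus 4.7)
The strategy is to construct two separate one-dimensional hard instances $F_1, F_2$ (with their stochastic gradient oracles), each enforcing one of the two terms of \eqref{eq:nonconvex.lower_bound.SGD_lower_bound.lower_bound}. The polynomial step-size $\eta_t = \eta t^{-r}$ produces the cumulative sums $\sum_{s=1}^T \eta s^{-r} \asymp \eta T^{1-r}/(1-r)$, which governs the deterministic drift, and $\sum_{s=1}^T \eta^p s^{-rp}$, which governs the noise accumulation; matching these against the corresponding error budgets recovers the two terms.

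For the initial-gap term I would take $F_1$ to be deterministic ($\sigma = 0$, so both moment assumptions hold trivially), defined piecewise as $F_1(x) = -2\varepsilon x$ for $x \leq 0$ and $F_1(x) = -2\varepsilon x + (L_\nu/\nu) x^\nu$ for $x \geq 0$. This is convex, Lipschitz with constant $2\varepsilon$ on $(-\infty, x^*]$, and $(0, L_\nu, \nu)$-Hölder smooth (the one-sided gradients both equal $-2\varepsilon$ at the join), with minimizer $x^* = (2\varepsilon/L_\nu)^{1/(\nu-1)}$ and $|F^*| = \tfrac{\nu-1}{\nu}\cdot 2\varepsilon \, (2\varepsilon/L_\nu)^{1/(\nu-1)}$. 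The slack hypothesis $\varepsilon^{\nu/(\nu-1)} \leq \tfrac{\nu}{\nu-1}\tfrac{\Delta_1}{4}(L_\nu/2)^{1/(\nu-1)}$ gives $|F^*| \leq \Delta_1/2$, permitting a starting point $x_1 \leq 0$ with $F_1(x_1) - F^* = \Delta_1$ and $|x_1| \geq \Delta_1/(4\varepsilon)$. Since $\nabla F_1 \equiv -2\varepsilon$ on $\R_{\leq 0}$, the deterministic iterates $x_{t+1} = x_t + 2\varepsilon \eta_t$ satisfy $x_T \leq x_1 + 2\varepsilon \eta T^{1-r}/(1-r)$, so they remain in $\R_{\leq 0}$ — where $\|\nabla F_1(x_t)\| = 2\varepsilon > \varepsilon$ deterministically — as long as $T \leq \big((1-r)\Delta_1/(8\eta\varepsilon^2)\big)^{1/(1-r)}$, which matches the first term of the bound.

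For the noise-dominated term I would take $F_2(x) = (L_\nu/\nu)|x|^\nu$ started from an $x_1$ with $|x_1|$ chosen just above the critical radius $R_\varepsilon := (\varepsilon/L_\nu)^{1/(\nu-1)}$ — so that $\|\nabla F_2(x_1)\| > \varepsilon$ and the initial gap is still $\lesssim \Delta_1$ by the same slack hypothesis — and inject a two-point noise $\xi_t$ supported on $\{+\sigma, -\sigma\}$, scaled so that \hyperref[assum:pBM]{(p-BM)} and \hyperref[assum:pBCM]{(p-BCM)} are tight at the prescribed $\sigma$ and $G$. The iterates satisfy $x_{t+1} = x_t - \eta_t L_\nu |x_t|^{\nu-1}\operatorname{sign}(x_t) - \eta_t \xi_t$ with $|\xi_t| = \sigma$ almost surely. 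I would then show by induction on $t$, over the finite set of $2^T$ sign patterns of $\xi_\cdot$, that $|x_t| > R_\varepsilon$ — and hence $\|\nabla F_2(x_t)\| > \varepsilon$ — for every $t \leq T$ whenever $T$ violates the second term of \eqref{eq:nonconvex.lower_bound.SGD_lower_bound.lower_bound}. The worst-case noise pattern is the one that consistently drags the iterate toward the origin; the time for such a trajectory to cross $R_\varepsilon$, balanced against the polynomial step-size, produces the exponents $1/r$ on $\eta$ and $\tfrac{p-1+\nu-1}{r(p-1)(\nu-1)} = \tfrac{1}{r(\nu-1)} + \tfrac{1}{r(p-1)}$ on $1/\varepsilon$.

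The main obstacle is the a.s.\ lower bound on $|x_t|$ for $F_2$: because the noise is two-point and $|\xi_t| = \sigma$ deterministically, measurability of the claim collapses to a worst-case deterministic trajectory analysis over the finite sign alphabet, but making the induction tight in all parameters simultaneously — in particular producing the precise fractional exponent that mixes the Hölder index $\nu-1$ (from the restoring force) with the heavy-tailed moment index $p-1$ (from the noise magnitude/variance trade-off in the single-step bound) — will require a delicate balance of the restoring force $\eta_t L_\nu |x_t|^{\nu-1}$ against the noise kick $\eta_t \sigma$ exactly when $|x_t|$ approaches $R_\varepsilon$. Secondary points are verifying Hölder smoothness of $F_1$ at the kink $x=0$ (which holds up to an absolute constant since the one-sided Taylor residuals are directly computable) and ensuring the two-point noise saturates both \hyperref[assum:pBM]{(p-BM)} and \hyperref[assum:pBCM]{(p-BCM)} with the prescribed $\sigma$ and $G$ simultaneously.
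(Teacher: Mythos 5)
Your first construction ($F_1$: a linear ramp of slope $-2\varepsilon$ that SGD must traverse with steps of size $2\varepsilon\eta_t$) is essentially the paper's Proposition~\ref{prop:nonconvex.sgd_lb.deterministic_small_steps}, up to constants; the only fix needed is to cap the function beyond its minimizer (as the paper does) so that it is globally $G$-Lipschitz --- as written, your $F_1$ has gradient $-2\varepsilon+L_\nu x^{\nu-1}$, which is unbounded for large $x$.

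The second construction, however, has a genuine gap. The theorem demands an \emph{almost sure} barrier: every positive-probability realization must satisfy $\min_{t\in[T]}\|\nabla F_2(x_t)\|>\varepsilon$. With your fixed two-point noise $\xi_t\in\{\pm\sigma\}$ this fails: a single step can land the iterate essentially at the minimizer with probability $1/2$. For instance with $\nu=2$ the update is $x_{t+1}=(1-\eta_t L_\nu)x_t\mp\eta_t\sigma$, and for a wide range of $(x_t,\eta_t)$ one of the two branches is arbitrarily close to $0$, where the gradient is far below $\varepsilon$ --- so the "worst-case sign pattern" induction you propose proves a weaker statement than the one required (a bound for \emph{some} realization, not for \emph{all}). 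Moreover, a noise bounded by $\sigma$ almost surely cannot produce the second term's $p$-dependence at all: to keep the iterates outside a radius $\tau_t\propto\bigl(\eta_t^{p-1}\sigma^p/L_\nu\bigr)^{1/(p+\nu-2)}$ one needs kicks of magnitude $\sim\tau_t/\eta_t$, which can greatly exceed $\sigma$, and these are admissible only because they occur with a probability small enough that the $p$-th central moment stays below $\sigma^p$ --- this rare-large-kick trade-off is precisely where heavy-tailedness enters the exponent. The paper's Proposition~\ref{prop:nonconvex.sgd_lb.stoch_large_steps} resolves both issues with a \emph{state- and stepsize-dependent} oracle: whenever the deterministic update would land inside the ball of radius $\bar\tau$, it returns one of two scaled gradients $g_\pm u$ chosen so that \emph{both} possible outcomes land exactly on the sphere of radius $\bar\tau$, with probabilities fixed by unbiasedness, and then verifies (p-BM)/(p-BCM) by bounding the probability of the large kick. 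Iterating this oracle gives the almost-sure radial barrier $\min_{t\in[T]}\|x_t\|\geq\min\{\|x_1\|,\min_{t\in[T]}\tau_t\}$, from which the second term of the lower bound follows; without some such adaptive oracle your plan cannot be completed.
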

\end{tcolorbox}
\footnotetext{{Note that both assumption are mild in the sense that, whenever one of them is violated, we have convergence at $x_1$ or within constantly many iterations.}}
The proof of \Cref{thm:nonconvex.lower_bound.SGD_lower_bound} consists of two lower-bound constructions, providing the first and second lower-bound term respectively. The following proposition provides the first term in \Cref{eq:nonconvex.lower_bound.SGD_lower_bound.lower_bound}, using a deterministic function construction punishing small stepsizes.

\begin{proposition}\label{prop:nonconvex.sgd_lb.deterministic_small_steps}
    Let $\nu \in (1,2], \eps > 0, \Delta_1, L_\nu \geq 0$ and assume $\eps^{\frac{\nu} {\nu-1}} \leq \frac{\nu} {\nu-1} \frac{\Delta_1}{4}\pare{\frac{L_\nu}{2}}^{\frac 1 {\nu-1}}$. Then there exists a convex, $(2\eps)$-Lipschitz continuous, $(0, L_\nu, \nu)$-Hölder smooth function $F \colon \R \to \R$ with $F(x_1) - \inf_{x \in \R} F(x) \leq \Delta_1$ such that \algname{SGD} with stepsizes $\pare{\eta_t}_{t \in \Nb_{\geq 1}} \geq 0$ requires 
    \begin{align*}
        \sum_{t=1}^{T-1} \eta_t > \frac{\Delta_1}{4 \eps^2}
    \end{align*}    
    to hold to reach an $\eps$-stationary point, i.e.\ $x_T \in \R$ with $\norm{\nabla F(x_T)} \leq \eps$.
\end{proposition}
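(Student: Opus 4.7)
The plan is to construct a deterministic one-dimensional bad instance consisting of a long linear ramp of slope $2\eps$ smoothly attached to a $\nu$-power bulge near the minimizer, and to show that SGD cannot cross the ramp with small total step-size. Since the oracle is deterministic, the iterates are fully determined by $\{\eta_t\}_{t\geq 1}$, so the task reduces to a purely geometric one: bound the total displacement $x_1 - x_T$ in terms of $\sum_t\eta_t$ using the Lipschitz cap $2\eps$, and show that the distance from $x_1$ to the $\eps$-stationary region is at least $\Delta_1/(2\eps)$.

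Concretely, I would fix $\delta \coloneqq (2\eps/L_\nu)^{1/(\nu-1)}$ and define
\begin{equation*}
F(x) \coloneqq \begin{cases} 0 & x \leq 0,\\ \tfrac{L_\nu}{\nu}\, x^\nu & 0 \leq x \leq \delta,\\ 2\eps(x-\delta) + \tfrac{L_\nu \delta^\nu}{\nu} & x \geq \delta, \end{cases}
\end{equation*}
which is convex, non-decreasing, non-negative with $F^* = 0$, has $|\nabla F|\leq 2\eps$ (so is $(2\eps)$-Lipschitz), and whose derivative $\nabla F(x) = L_\nu\max(x,0)^{\nu-1}$ capped at $2\eps$ is $(\nu-1)$-Hölder continuous on $\R$ with constant $L_\nu$; hence $F$ is $(0,L_\nu,\nu)$-Hölder smooth. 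I would then pick the starting point $x_1 \coloneqq \delta + D$ with $D \coloneqq (\Delta_1 - \tfrac{L_\nu \delta^\nu}{\nu})/(2\eps)$ so that $F(x_1) = 2\eps D + \tfrac{L_\nu\delta^\nu}{\nu} = \Delta_1$. The proposition's hypothesis $\eps^{\nu/(\nu-1)} \leq \tfrac{\nu}{\nu-1}\tfrac{\Delta_1}{4}(L_\nu/2)^{1/(\nu-1)}$ is precisely what absorbs the bulge contribution $\tfrac{L_\nu \delta^\nu}{\nu} = 2\eps\delta/\nu$ into the $\Delta_1$ budget and ensures $D \geq 0$, so that $x_1$ indeed lies on the linear ramp.

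The analysis is then short. All gradients are non-negative, so the SGD trajectory is monotonically non-increasing, and $x_T$ is $\eps$-stationary precisely when $x_T \leq x^{**} \coloneqq (\eps/L_\nu)^{1/(\nu-1)} = \delta / 2^{1/(\nu-1)}$. Summing the update identity $x_1 - x_T = \sum_{t=1}^{T-1}\eta_t\,\nabla F(x_t)$ and using $|\nabla F(x_t)| \leq 2\eps$ gives $2\eps \sum_{t=1}^{T-1}\eta_t \geq x_1 - x_T \geq x_1 - x^{**}$. A direct computation using $F(\delta) = 2\eps\delta/\nu$ yields
\begin{equation*}
x_1 - x^{**} \;=\; \tfrac{\Delta_1}{2\eps} \;+\; \delta\bigl(\tfrac{\nu-1}{\nu} - 2^{-1/(\nu-1)}\bigr) \;\geq\; \tfrac{\Delta_1}{2\eps},
\end{equation*}
where the final inequality follows from $2^{1/(\nu-1)} \geq \nu/(\nu-1)$ on $\nu \in (1,2]$ (equivalently $2^{t} \geq 1 + t$ for $t \coloneqq 1/(\nu-1) \geq 1$). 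This rearranges to $\sum_{t=1}^{T-1}\eta_t \geq \Delta_1/(4\eps^2)$, and the strict inequality stated in the proposition is recovered by noting that the first $\eps$-stationary iterate must actually enter the bulge, where $|\nabla F| < 2\eps$ strictly.

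The main obstacle is calibrating the smoothing scale $\delta$ so that it simultaneously realizes the $(2\eps)$-Lipschitz cap at the top of the bulge and the $L_\nu$ Hölder constant at the bottom; any other scale violates one of the two. A secondary bookkeeping point is checking that the stated hypothesis on $\Delta_1$ is tight enough to absorb the bulge cost into the linear-ramp budget (so that $D \geq 0$) but still includes the relevant heavy-tailed regime. The sharpness of the constant $1/(4\eps^2)$ (as opposed to merely $\Theta(1/\eps^2)$) hinges on the identity $2^{1/(\nu-1)} = \nu/(\nu-1)$ at $\nu = 2$ and its strict version for $\nu \in (1,2)$; this is what forces the precise geometry of the construction.
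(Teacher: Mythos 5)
Your route is genuinely different from the paper's: you fix a single hard function (a slope-$2\varepsilon$ ramp glued to a $\nu$-power bulge) that does not depend on the step-sizes and run a displacement-versus-budget argument, whereas the paper's construction makes the ramp length itself adapt to $\{\eta_t\}$ (the kink is placed at $y_{T^*}=2\varepsilon\sum_{t<T^*}\eta_t$), so the iterates provably see gradient magnitude exactly $2\varepsilon$ until the step-size budget is spent. Your accounting is in fact sharper in the constant (it targets the stated $\Delta_1/(4\varepsilon^2)$, while the paper's own construction only delivers the threshold $\Delta_1/(8\varepsilon^2)$, which is all its Theorem~\ref{thm:nonconvex.lower_bound.SGD_lower_bound} uses). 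However, there is a genuine gap in how you use the hypothesis.

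With your $\delta=(2\varepsilon/L_\nu)^{1/(\nu-1)}$, the assumption $\varepsilon^{\nu/(\nu-1)}\le\frac{\nu}{\nu-1}\frac{\Delta_1}{4}\left(\frac{L_\nu}{2}\right)^{1/(\nu-1)}$ is equivalent to $\Delta_1\ge\frac{4(\nu-1)}{\nu}\,\varepsilon\delta$, while $D\ge0$ requires $\Delta_1\ge\frac{L_\nu\delta^\nu}{\nu}=\frac{2}{\nu}\,\varepsilon\delta$. Since $\frac{4(\nu-1)}{\nu}\ge\frac{2}{\nu}$ only when $\nu\ge\frac32$, your claim that the hypothesis ``ensures $D\ge0$'' is false for $\nu\in(1,\tfrac32)$: e.g.\ for $\nu=1.1$ and $\Delta_1=\tfrac12\varepsilon\delta$ the hypothesis holds but $D<0$, so $x_1=\delta+D$ lies inside the bulge, $F(x_1)\ne\Delta_1$, and your identity $x_1-x^{**}=\frac{\Delta_1}{2\varepsilon}+\delta\bigl(\frac{\nu-1}{\nu}-2^{-1/(\nu-1)}\bigr)$ no longer applies. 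The hypothesis is calibrated to the paper's accounting, where the budget charged to the bulge is the net drop $\frac{2(\nu-1)}{\nu}\varepsilon\delta$, not your bulge height $\frac{2}{\nu}\varepsilon\delta$. The missing case is repairable: when $D<0$, start at $x_1=(\nu\Delta_1/L_\nu)^{1/\nu}\le\delta$ and verify $x_1-x^{**}\ge\frac{\Delta_1}{2\varepsilon}$, i.e.\ $\left(\frac{\nu s}{2}\right)^{1/\nu}\ge\frac{s}{2}+2^{-1/(\nu-1)}$ for $s=\Delta_1/(\varepsilon\delta)\in\bigl[\frac{4(\nu-1)}{\nu},\frac{2}{\nu}\bigr]$, which does hold (check the endpoints; the left-hand side minus $s/2$ is concave in $s$) --- but this analysis is absent from your write-up. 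A second, minor issue: your strictness argument fails at $\nu=2$, where a single step of total mass exactly $\Delta_1/(4\varepsilon^2)$ from the ramp lands exactly at $x^{**}$ with $F'(x^{**})=\varepsilon$, so only the non-strict inequality is forced by your instance; this borderline is harmless for the downstream theorem but should be acknowledged or removed.
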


The proof of \Cref{prop:nonconvex.sgd_lb.deterministic_small_steps} extends the hard instance from \citep{NSGDM_LzLo2023Huebler} to Hölder-smoothness, before showing \algname{SGD} converges slowly on it. The proof can be found in \Cref{sec:app.missing_proofs}. Next we focus on the second term of \eqref{eq:nonconvex.lower_bound.SGD_lower_bound.lower_bound}, by constructing a function and oracle that punish large stepsizes.

\begin{proposition}\label{prop:nonconvex.sgd_lb.stoch_large_steps}
    Let $\nu \in (1, 2], 0 \leq \sigma \leq G, \eta, L_\nu > 0$ and $x \in \R^d$. Furthermore let $d \in \Nb$ be arbitrary and
    \begin{align*}
        F\colon \R^d \to \R, F(z) \coloneqq \begin{cases}
            \frac{L_\nu}{2^{2-\nu}\nu}\norm{z}^\nu, & \norm{z} \leq r_H\\
            \frac G 2 \norm{z} - C, & \norm{z} > r_H,
        \end{cases}
    \end{align*}
    where  $r_H \coloneqq \pare{\frac{G}{2^{\nu-1}L_\nu}}^{\frac 1 {\nu - 1}}$ and $C = \frac{\nu - 1}{4\nu} \pare{\frac{G^\nu}{L_\nu}}^{\frac 1 {\nu - 1}}$. Then there exist a gradient oracle $\nabla f(z, \xi)$ that satisfies \hyperref[assum:pBM]{(p-BM)} and \hyperref[assum:pBCM]{(p-BCM)} such that $y \coloneqq x - \eta \nf{x, \xi}$ satisfies 
    \begin{align*}
        \norm{y} \geq \min\set{\norm{x}, \frac 1 2 \pare{\frac{\eta^{p-1}\sigma^p}{2^pL_\nu}}^{\frac 1 {p + \nu - 2}}}.
    \end{align*}
    % where $\tau \coloneqq \pare{\eta L}^{\frac{p-1} p} \frac \sigma {2L}$.
    % where $\tau \coloneqq \frac 1 2 \pare{\frac{\eta^{p-1}\sigma^p}{2^pL}}^{\frac 1 {p + \nu - 2}}$.
\end{proposition}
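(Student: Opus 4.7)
My plan is to build an adversarial two-point gradient oracle whose noise is aligned with the radial direction $\hat x := x/\norm{x}$, and to exploit the radial symmetry of $F$ to reduce everything to a scalar analysis. In this reduction, $\nabla F(x) = F'(\norm{x})\hat x$ with $F'(r) = 2^{\nu-2} L_\nu r^{\nu-1}$ in the smooth region $r \leq r_H$ and $F'(r) = G/2$ in the linear tail, so $\nabla F(x)$ is always a non-negative multiple of $\hat x$. I will design the oracle so that on every realization of $\xi$ the iterate $y = x - \eta \nabla f(x,\xi)$ lies outside the ball of radius $\min\cb{\norm{x}, R}$, where $R$ denotes the right-hand side in the stated bound.

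Concretely, I would take $\nabla f(x, \xi)$ supported on $\cb{\alpha \hat x, \beta \hat x}$ with probabilities $q$ and $1-q$, where $q \in (0,1)$ is a tunable parameter depending on $\norm{x}$. Unbiasedness forces $\beta = (F'(\norm{x}) - q\alpha)/(1-q)$, and I would set $\alpha = F'(\norm{x}) + (\sigma^p/q)^{1/p}$ so that the centered noise has magnitude $|\alpha - F'(\norm{x})|$ on the large-probability event and $q|\alpha - F'(\norm{x})|/(1-q)$ on its complement. A direct calculation then yields $\Exp{\norm{\nabla f - \nabla F}^p} = |\alpha - F'(\norm{x})|^p (q + q^p/(1-q)^{p-1}) = \Theta(\sigma^p)$ for small $q$, delivering (p-BCM); (p-BM) then follows from the triangle inequality combined with the fact that $F$ is $G$-Lipschitz.

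Bounding $\norm{y}$ splits by realization. On the large-jump branch, $y = x - \eta \alpha \hat x$ overshoots the origin, giving $\norm{y} = \eta\alpha - \norm{x}$. On the small-jump branch, substituting $\beta$ gives $y = x + \eta \hat x (q\alpha - F'(\norm{x}))/(1-q)$, so $\norm{y} \geq \norm{x}$ as soon as $q\alpha \geq F'(\norm{x})$. Calibrating $q$ at the scale $(F'(\norm{x})/\sigma)^{p/(p-1)}$ equalizes the two branches and leaves the single constraint $\eta\sigma^{p/(p-1)}/F'(\norm{x})^{1/(p-1)} \geq 2\norm{x}$; substituting $F'(r) = 2^{\nu-2}L_\nu r^{\nu-1}$ in the smooth region collapses this to $\norm{x}^{p+\nu-2} \leq \eta^{p-1}\sigma^p/L_\nu$ up to explicit constants, identifying $R$ as the critical radius. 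For $\norm{x} < R$ the small branch already ensures $\norm{y} \geq \norm{x}$, while for $\norm{x} \geq R$ the large branch ensures $\norm{y} \geq R$.

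I expect the main obstacle to be the careful bookkeeping of the numerical constants (the factors $\tfrac 1 2$ and $2^p$ in the definition of $R$) while simultaneously keeping (p-BCM) tight, and extending the argument cleanly to the linear region $\norm{x} > r_H$ where $F'(\norm{x}) = G/2$ becomes constant. Fortunately, $F$ has been engineered so that $F'$ is continuous at $r_H$, which makes a single oracle recipe extend seamlessly across both regions without requiring separate constructions.
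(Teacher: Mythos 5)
Your overall idea — a two-point oracle supported on the radial direction, with unbiasedness fixing one branch and a tunable probability $q$ — is the same skeleton as the paper's construction, but your parametrization (fix the noise magnitude at $(\sigma^p/q)^{1/p}$ to saturate the moment bound, then tune $q$) leaves a genuine gap. The bound $\norm{y}\geq\min\{\norm{x},R\}$ must hold on \emph{every} realization of $\xi$ (the theorem uses it to control $\min_{t\in[T]}\norm{x_t}$ almost surely), and your large-jump branch does not satisfy this when $\norm{x}$ is large. There $\norm{y}=|\eta\alpha-\norm{x}|$ with $\eta\alpha=\eta F'(\norm{x})+\eta\sigma q^{-1/p}$ fixed by your calibration, so nothing prevents $\eta\alpha$ from falling inside the window $(\norm{x}-R,\norm{x}+R)$: e.g.\ in the linear region $F'\equiv G/2$, $\eta\alpha$ is a constant, and choosing $\norm{x}=\eta\alpha+R/2$ makes the large branch land at distance $R/2<R$. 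Your closing claim "for $\norm{x}\geq R$ the large branch ensures $\norm{y}\geq R$" is exactly the unproved step. The paper avoids this by a case split you do not have: if the \emph{deterministic} update $x-\eta\nabla F(x)$ already lands outside the ball of radius $\bar\tau=\min\{\norm{x},\tau\}$, it uses the noiseless oracle; only otherwise does it use a two-point oracle, and then it pins \emph{both} outcomes exactly on the sphere of radius $\bar\tau$ (jumps $g_\pm=(\norm{x}\pm\bar\tau)/\eta$), so every realization escapes by construction.

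A second, related gap is the verification of \hyperref[assum:pBCM]{(p-BCM)}. Your moment computation gives $\sigma^p\bigl(1+q^{p-1}/(1-q)^{p-1}\bigr)$, which exceeds $\sigma^p$ for every $q$ and blows up as $q\to1$; "$\Theta(\sigma^p)$" is not enough, and your calibration $q\approx(F'(\norm{x})/\sigma)^{p/(p-1)}$ reaches or exceeds $1$ whenever $F'(\norm{x})\geq\sigma$ (e.g.\ in the linear region when $\sigma<G/2$), so the recipe does not "extend seamlessly" there. The missing quantitative ingredient is the paper's key structural step: in the only case where noise is needed (deterministic step lands inside the ball), the Hölder growth $F'(r)\leq 2^{\nu-2}L_\nu r^{\nu-1}$ forces the probability of the far jump to be at most $A=\eta L_\nu/(2(2\bar\tau)^{2-\nu})$, whence the central moment is at most $(2\bar\tau)^{p+\nu-2}L_\nu/\eta^{p-1}\leq\sigma^p/2^p$ by the very definition of $\tau$. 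Some version of this inequality is what makes any variant of the construction affordable, and it is absent from your proposal; without it, and without the case split, the argument covers only the regime $\norm{x}\lesssim R$ inside the Hölder region.
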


% ------------------- Plot of Stochastic Lower-Bound --------------------
\begin{figure}[t]
	\centering
    \tikzset{>={Latex[width=1.5mm,length=2.5mm]}} 
	\begin{tikzpicture}
		
		%%%%%%%%%%%%%%%%%%%%%%%%%%%%%%%%%%%%%%%%%%%%%%%%%%%%%%%%%%%%%%%%%%%%%%%%%%%%%
		% ----------- Parameters
		% (You can change these values and the rest will update accordingly.)
		% Function Params
		\pgfmathsetmacro{\L}{0.1}             % Smoothness Parameter
		\pgfmathsetmacro{\G}{0.13}             % Lipschitz Parameter
		\pgfmathsetmacro{\nu}{1.7}               % Hölder Exponent
		% Oracle Params
		\pgfmathsetmacro{\sig}{0.2}           % Noise Level
		\pgfmathsetmacro{\p}{2}             % Tail Index
		% SGD Params
		\pgfmathsetmacro{\stepsize}{1.4 / \L}           % Stepsize
		% Plotting Params
        % These are currently mostly set adaptively, see definition of \x and \xrange below. They could also be set deterministically.
		%%%%%%%%%%%%%%%%%%%%%%%%%%%%%%%%%%%%%%%%%%%%%%%%%%%%%%%%%%%%%%%%%%%%%%%%%%%%%
		% Calculations required
		% tau
		\pgfmathsetmacro{\thresh}{%
			pow(pow(\stepsize,\p-1) * pow(\sig, \p)/(pow(2,\p) * \L), 1/(\p+\nu-2)) / 2%
		}
		% starting iterate
		\pgfmathsetmacro{\x}{(-\thresh)*1.4}             % Starting iterate
		% Plotting Params
		\pgfmathsetmacro{\xrange}{abs(\x) * 1.05}   % x axis [-xrange, xrange]
		% Huber threshold
		\pgfmathsetmacro{\rH}{pow(\G / (pow(2,\nu - 1) * \L), 1/(\nu-1))}
		% Continuoutiy function
		\pgfmathsetmacro{\C}{\G/2*\rH - \L/(pow(2,2-\nu)*\nu)*pow(\rH,\nu)}
		% Whether linear part must be drawn
		\pgfmathtruncatemacro{\drawLinear}{%
			ifthenelse(\rH<\xrange,1,0)%
		}
		% vF(x)
		\pgfmathsetmacro{\nFatx}{%
			ifthenelse(abs(\x)==0,0,% case x = 0
				ifthenelse(abs(\x)<=\rH,% Hölder zone
					\L / pow(2, 2-\nu) * \x * pow(abs(\x),\nu-2),% Hölder zone
					\G / 2 *sign(\x)% Linear Zone
				)%
			)%
		}
		% yd = x - eta vF(x)
		\pgfmathsetmacro{\yd}{\x - \stepsize * \nFatx}
		% F(z)
		\pgfmathdeclarefunction{F}{1}{%
			\pgfmathparse{%
				ifthenelse(abs(#1)<=\rH,
				\L/(pow(2,2-\nu)*\nu)*pow(abs(#1),\nu),
				\G/2*abs(#1) - \C)}%
		}
		% Concrete values used for the arrows
		\pgfmathsetmacro{\Fx}{F(\x)}              % F(x)
		\pgfmathsetmacro{\Fyd}{F(\yd)}            % F(y_d)
		\pgfmathsetmacro{\Ftau}{F(\thresh)}       % F(τ)  (= F(‑τ))
        %
        % how far below the x-axis our markings are
		\pgfmathsetmacro{\ymin}{-(0.25 * \Fx)}               % Hölder Exponent
		%%%%%%%%%%%%%%%%%%%%%%%%%%%%%%%%%%%%%%%%%%%%%%%%%%%%%%%%%%%%%%%%%%%%%%%%%%%%%
		% Actual plot
		\begin{axis}[
			xmin=-\xrange, xmax=\xrange,                         % Fix x axis to prevent huge values
			scale only axis,                                     % Prevents text from being re-scaled
			width=\axisdefaultwidth,                             % To squash the ...
			height=4cm,                                          % ... y-axis.
			axis lines=middle,                                   % To plot the y-axis in the center of the x-axis
			xlabel={$z$}, ylabel={$F(z)$},                       % Labels
			xlabel style={above}, ylabel style={below right},    % Placement of Labels 
			xtick={-\thresh,\thresh,\x},                         % manual ticks on x axis
			xticklabels={$-\tau$,$\tau$,$x$},                    % ^
            x tick label style={anchor=south west},		         % ^
            extra x ticks={\yd},								 % Extra tick for y_d
			extra x tick label={$y_d$},							 % ^
			extra x tick style={xticklabel style={below}},		 % ... below x-axis
			ytick=\empty,                                        % No ticks on y-axis
			clip=false                                           % To allow visuals below the x axis
			]
			
			% --------------------- Function Plot -----------------------
			\addplot[blue, very thick, domain=-\xrange:\xrange, samples=400] {F(x)};
			
			% --------------------- Arrows ---------------------
			% solid for deterministic update
			\draw[solid, thick, ->] (axis cs:\x,\Fx) -- (axis cs:\yd,\Fyd);
            \filldraw[black] (axis cs:\x,\Fx) circle (1.8pt); % marker at starting point
			
			% dashed for stochastic offsets
			\draw[dashed, very thick, ->, draw=red] (axis cs:\yd,\Fyd) -- (axis cs:-\thresh,\Ftau);
			\draw[dashed, very thick, ->, draw=red] (axis cs:\yd,\Fyd) -- (axis cs:\thresh,\Ftau);
            \filldraw[red] (axis cs:-\thresh,\Ftau) circle (1.8pt);     % marker at end points
            \filldraw[red] (axis cs:\thresh,\Ftau) circle (1.8pt);      % marker at end points
			
			% --------------------- Dotted Lines ---------------------
			% dotted (z, 0) -- (z, F(z)) for z in {x, -tau, y_d, tau}
			\pgfplotsinvokeforeach{\x,-\thresh,\thresh}{%
				% compute F(#1) and save it
				\pgfmathparse{F(#1)}\let\Fz\pgfmathresult
				% draw the segment with forced evaluation
				\edef\temp{
					\noexpand\draw[gray,dotted](axis cs:#1,\ymin) -- (axis cs:#1,\Fz);
				}
				\temp
			}
			
			% --------------------- Distance markings ---------------------
			% coordinates on the axis
			\draw[|-|] (axis cs:\x,{\ymin/1.75}) -- (axis cs:-\thresh,{\ymin/1.75}) 
			node[pos=.5, fill=white, inner sep=1.5pt] {$\eta g_{-}$};
			\draw[|-|] (axis cs:\x,\ymin) -- (axis cs:\thresh,\ymin) 
			node[pos=.5, fill=white, inner sep=1.5pt] {$\eta g_{+}$};
			
		\end{axis}
		
	\end{tikzpicture}
	\caption{Visualisation of the lower bound construction in \Cref{prop:nonconvex.sgd_lb.stoch_large_steps}, for the case where the deterministic update $y_d = x - \eta \nabla F(x)$ lands within the critical radius $\tau$. The dashed lines represent the two possible offsets introduced by the constructed gradient oracle, resulting in $y = x - \eta g_{\pm} u = x + \eta g_{\pm} = \pm \tau$.}
	\label{fig:temp}
\end{figure}

\begin{proof}
    We first note that $F$ is differentiable, and its gradient satisfies
    \begin{align*}
        \norm{\nabla F(x)} = \left\{\begin{array}{lr}
            \frac {L_\nu} {2^{2 - \nu}}\norm{x}^{\nu - 1}, & \norm x \leq r_H\\
            \frac G 2, & \norm x > r_H
        \end{array}\right\}
        \leq \frac G 2.
    \end{align*}
    The proof hinges on constructing a gradient oracle that prevents $y$ from entering the open ball $B_{\bar \tau}$ with radius $\bar \tau \coloneqq \min\set{\norm{x}, \tau}$, where $\tau \coloneqq \frac 1 2 \pare{\frac{\eta^{p-1}\sigma^p}{2^pL_\nu}}^{\frac 1 {p + \nu - 2}}$. We will differentiate two cases.

    \paragraph{Case I: $\norm{x - \eta\nabla F(x)} \geq \bar \tau$.} In this case we can use the deterministic gradient oracle $\nf{x, \xi} = \nabla F(x)$, as the deterministic update already lands outside $B_{\bar \tau}$. This oracle trivially satisfies $p$-BCM, and $p$-BM is satisfied due to $\norm{\nabla F(x)} \leq \frac G 2$.

    \paragraph{Case II: $\norm{x - \eta\nabla F(x)} < \bar \tau$.} This case means that the deterministic update would land in $B_{\bar \tau}$, and we must hence construct an oracle that moves $y$ outside it.
    Therefore define $r \coloneqq \norm{x}, u \coloneqq \frac x r, \mu \coloneqq \norm{\nabla F(x)}, \mu' \coloneqq \frac{L_\nu r^{\nu - 1}}{2^{2-\nu}} \geq \mu$ and
    \begin{align*}
        g_\pm \coloneqq \frac{r \pm \bar \tau}{\eta} \geq 0, \qquad
        \delta_\pm  \coloneqq \abs{\mu - g_\pm}, \qquad
        \Delta \coloneqq g_+ - g_- = \frac{2 \bar \tau}{\eta}.
    \end{align*}
    Note that by our case assumption, we have $g_- < \mu < g_+$. We next define the probability $\delta \coloneqq \frac{\delta_-}{\Delta}$ and gradient oracle
    \begin{align*}
        \nf{x, \xi} \coloneqq \begin{cases}
            g_- u, & \xi \geq \delta\\
            g_+ u, & \xi \leq \delta,
        \end{cases}
    \end{align*}
    where $\xi \sim \operatorname{Unif}([0,1])$. By construction we now have that $\norm{x - \eta \nf{x, \xi}} = \abs{r - \pare{r \pm \bar \tau}} \norm{u} = \bar \tau$ and hence it only remains to show that $\nf{x, \xi}$ satisfies the noise assumptions. Therefore first note that this oracle is unbiased by
    \begin{align*}
        \Exp{\nf{x, \xi}} 
        = \pare{1-\delta} g_- u + \delta g_+ u
        = \frac{\pare{g_+ - \mu} g_- + \pare{\mu - g_-} g_+}{\Delta} u
        = \frac{\mu\pare{g_+ - g_-}}{\Delta} u = \nabla F(x),
    \end{align*}
    where we used $\delta = \frac{\mu - g_-}{\Delta}, 1-\delta = \frac{g_+ - \mu}{\Delta}$ and the definition of $\Delta$. Next we check the bounded $p$-th central moment property. Therefore we first calculate
    \begin{align*}
        \Exp{\norm{\nf{x, \xi} - \nabla F(x)}^p} 
        = \pare{1 - \delta} \delta_-^p + \delta \delta_+^p
        = \Delta^p \pare{\pare{1- \delta}\delta^p + \delta \pare{1 - \delta}^p}.
    \end{align*}
    Now let $s \coloneqq \min \set{\delta, 1 - \delta}$ and note that $\pare{1- \delta}\delta^p + \delta \pare{1 - \delta}^p \leq \delta^p + \delta \leq 2 \delta$. Using a symmetric argument for $1-\delta$, we get that $\pare{1- \delta}\delta^p + \delta \pare{1 - \delta}^p \leq 2 s$. Next, by definition, we have $s \leq \nicefrac 1 2$ and
    \begin{align*}
        \delta 
        = \frac{\mu - g_-} \Delta 
        \leq \frac{\mu' - g_-} \Delta
        = \frac{\eta L_\nu r^{\nu-1}}{ 2^{2-\nu} 2 \bar \tau} - \frac{r - \bar \tau}{2 \bar \tau}
        = A t^{\nu-1} - \frac{t - 1}{2},
    \end{align*}
    where  $A \coloneqq \frac{\eta L_\nu}{2\pare{2\bar \tau}^{2-\nu}}$, and $t \coloneqq \frac r {\bar \tau} \geq 1$. Noting that, whenever $A \leq \nicefrac 1 2$,
    \begin{align*}
         A t^{\nu-1} - \frac{t - 1}{2}
         = A + A \pare{t^{\nu-1} - 1} - \frac{t - 1}{2}
         \leq A + \frac{t^{\nu-1} - t}{2}
         \leq A
    \end{align*}
    yields $s \leq A$ and hence
    \begin{align*}
        \Exp{\norm{\nf{x, \xi} - \nabla F(x)}^p} 
        &\leq \Delta^p 2 s
        \leq \pare{\frac{2 \bar \tau}{\eta}}^p \frac{\eta L_\nu}{\pare{2\bar \tau}^{2-\nu}}
        = \pare{2 \bar \tau}^{p + \nu - 2} \frac {L_\nu} {\eta^{p-1}}
        \leq \pare{2 \tau}^{p + \nu - 2} \frac {L_\nu} {\eta^{p-1}}
        = \frac{\sigma^p}{2^p}.
    \end{align*}
    By Jensen's inequality we have $\norm{\frac{v+w}{2}}^p \leq \frac{\norm{v}^p + \norm{w}^p}{2}$ for all $v, w \in \R^d$, and hence $\norm{v + w}^p \leq 2^{p-1}\pare{\norm v^p + \norm w^p}$. Finally we use this fact to get
    \begin{align*}
        \Exp{\norm{\nabla f(x, \xi)}^p} 
        \leq 2^{p-1} \pare{\norm{\nabla F(x)}^p + \Exp{\norm{\nabla f(x, \xi) - \nabla F(x)}^p} }
        \leq \frac {G^p} 2 + \frac{\sigma^p}{2} 
        \leq G^p.
    \end{align*}
    Hence the gradient oracle satsifies the $p$-BM and $p$-BCM assumption, completing the proof.
\end{proof}

By iteratively applying \Cref{prop:nonconvex.sgd_lb.stoch_large_steps}, we get the lower bound
\begin{align}\label{eq:nonconvex.lower_bound.SGD_lower_bound.norm_xT_lb}
    \min_{t \in [T]} \norm{x_t} \geq \min\set{\norm{x_1}, \min_{t \in [T]} \tau_t}, \qquad \text{where} \qquad \tau_t \coloneqq \frac 1 2 \pare{\frac{\eta_t^{p-1}\sigma^p}{2^pL_\nu}}^{\frac 1 {p + \nu - 2}}
\end{align}
on the iterates. Translating it to a gradient norm lower-bound, and combining it with \Cref{prop:nonconvex.sgd_lb.deterministic_small_steps} yields \Cref{thm:nonconvex.lower_bound.SGD_lower_bound}. The formal argument is carried in \Cref{sec:app.missing_proofs}.

\paragraph{Discussion.} In the special case $p = \nu$, applying \Cref{thm:nonconvex.lower_bound.SGD_lower_bound} with $\sigma = G$ establishes a sample complexity lower bound of
\begin{align}\label{eq:nonconvex.lower_bound.sgd_polynomial_stepsize_sample_complexity_lower_bound}
    T = \Omega\pare{
        \pare{\frac{\Delta_1}{\eta \eps^2}}^{\frac{1}{1-r}}
        + \eta^{\frac 1 r} \pare{\frac{L_p G^p}{\eps^2}}^{\frac 1 {r(p-1)}}
    },
\end{align}
confirming the tightness of our result in all parameters for unconstrained problems. Notably, our analysis demonstrates that $\eta_t \propto t^{-\frac 1 p}$ is the uniquely optimal polynomial decay. For constrained problems, the iterates of our lower-bound stay within a domain of diameter $\frac{\Delta_1}{\varepsilon}$, and hence confirm tightness of our upper-bound result whenever $D_\cX \geq \frac{\Delta_1}{\eps}$. 

For domains with smaller diameter, the situation is more complicated, even in the classical $p = \nu = 2$ setting. There, our lower-bound --- after choosing $\Delta_1 = \eps D_{\cX}$ to guarantee $\norm{x_t - x*} \leq D_{\cX}$ --- reduces to $\Omega\pare{\eps^{-3}}$, while the upper-bound is of order $\Oc\pare{\eps^{-4}}$. For convex functions, this gap can be closed from above \citep{projectedSGDconstrained2024lan}, for non-convex functions this question is still open even in this classical setting to the best of our knowledge. An overview can be found in \Cref{tab:nonconvex.sgd_lb.bounded_domain_overview}.

\newcommand{\tightColor}{\cellcolor{green!10}}
\newcommand{\looseColor}{\cellcolor{red!10}}
\begin{table}[]
    \centering
    \begin{tabular}{cccc}
        \toprule
        Init.\ Assumption & Convex & Lower-Bound & Upper-Bound \\ \midrule
        \multirow{2}{*}{$F(x_1) - F* \leq \Delta_1$} & \xmark & \tightColor & \tightColor$\Oc\pare{\eps^{-4}}$ \citep{Ghadimi2013} \\
        & \cmark & \tightColor\multirow{-2}{*}{ $\Omega \pare{\eps^{-4}}$ \citep{drori2020complexity}} & \tightColor$\Oc \pare{\eps^{-4}}$ \citep{drori2020complexity} \\ \midrule
        \multirow{2}{*}{$\operatorname{diam}(\cX) \leq D_{\cX}$} & \xmark & \looseColor$\Omega \pare{\eps^{-3}}$ (\Cref{thm:nonconvex.lower_bound.SGD_lower_bound}) & \looseColor$\Oc \pare{\eps^{-4}}$ \citep{Ghadimi2016} \\
        & \cmark & \tightColor$\Omega \pare{\eps^{-3}}$ (\Cref{thm:nonconvex.lower_bound.SGD_lower_bound}) & \tightColor$\Oc \pare{\eps^{-3}}$ \citep{projectedSGDconstrained2024lan}\\
        \bottomrule
    \end{tabular}
    \caption{Gradient oracle complexity upper- and lower-bounds for reaching an $\eps$-stationary point in the classical stochastic setting ($p = \nu = 2$) using (mini-batch) \algname{SGD} with polynomially decaying stepsizes. The bounded-domain lower-bounds follow by choosing $\Delta_1 = \eps D_{\cX}$ in \eqref{eq:nonconvex.lower_bound.sgd_polynomial_stepsize_sample_complexity_lower_bound}, guaranteeing that the iterates stay in a domain $\cX$ with $\operatorname{diam}(\cX) = \nicefrac {\Delta_1} \eps = D_{\cX}$. To the best of our knowledge, the optimal sample complexity for the non-convex, bounded domain setting remains open even in this classical setting.}
    \label{tab:nonconvex.sgd_lb.bounded_domain_overview}
\end{table}

Additionally note that, while \Cref{thm:nonconvex.lower_bound.SGD_lower_bound} is derived for polynomial stepsizes for simplicity, it also holds for arbitrary stepsizes, scaling with $\sum_{t=1}^T \eta_t$ and $\min_{t \in [T]} \eta_t$ respectively.

\paragraph{Comparison to prior work.}
The most closely related prior result established a lower bound of $\Omega(\min\set{L^2_2\Delta_1^2,\sigma^4} \eps^{-4})$ iterations for SGD under standard smoothness and bounded variance assumptions \citep{drori2020complexity}. We extend this prior complexity bound to the broader classes of Hölder-smooth ($\nu \in (1,2]$) and heavy-tailed gradient noise distributions ($p \in (1,2]$), recovering existing results in the classical $p=\nu=2$ case as special instances. Furthermore, their construction crucially depends on a restrictive high-dimensionality assumption $d \geq \tilde{\Omega}(\sigma^2 \Delta_1 \eps^{-4})$, limiting applicability. In contrast, our construction removes this dimensionality constraint entirely through a carefully designed gradient oracle. Finally, our analysis encompasses arbitrary stepsize sequences, whereas parts of the previous result requires specific schedules \citep[Proposition 2]{drori2020complexity}.

Beyond SGD, lower bounds for general first-order methods to reach an $\eps$-stationary point have been studied extensively. In the classical ($p = \nu = 2$) setting, $\Omega(\varepsilon^{-2})$ samples are required for \emph{convex} functions \citep{makingGradientSmallStochasticConvex2019Foster}. When $p \in (1, 2]$ and $\nu = 2$, \citep{zhang2020adaptive} establish a $\Omega(\varepsilon^{-(3p-2)/(p-1)})$ lower-bound for non-convex functions, which we recover as special case when $\nu = 2$ for \algname{SGD}. To the best of our knowledge, Hölder-smoothness has only been addressed in constrained convex optimization \citep{guzman2015hoeldersmoothlowerbound,bai2025tight}, with bounds expressed in terms of suboptimality rather than gradient stationarity. Consequently, our results uniquely address the combination of Hölder-smoothness and heavy-tailed noise simultaneously, offering lower bounds applicable to both convex and nonconvex settings.

\subsection{Upper Bound for Mini-batch SGD}\label{subsec:mini_batch_SGD}
Previously, we studied vanilla \algname{SGD} under Hölder smoothness and derived tight convergence rates along with a lower bound construction. While Hölder smoothness is weaker than standard smoothness ($\nu = 2$) when the set $\cX$ is bounded, it is unclear if the standard smoothness can be utilized directly. We first recall the definition of standard smoothness. 

\begin{assumption}[Smoothness]\label{ass:smooth_standard}
  \Cref{ass:smooth} holds with $\nu = 2$, i.e., there exist $\ell$ and $L$ such that
	$$
	- \frac{\ell}{2} \norm{x-y}^{2} \leq F(x) - F(y) - \langle \nabla F(y), x - y \rangle \leq  \frac{L}{2} \norm{x-y}^{2} \qquad \text{for all } x, y \in \cX.  
	$$ 
\end{assumption}
In this subsection, we consider $\nu = 2,$ and will omit the subscript in constants $\ell \eqdef \ell_2$, $L\eqdef L_2$, and the superscript in $F_{\nicefrac{1}{\rho}}(\cdot) := F_{\nicefrac{1}{\rho}}^{2}(\cdot),$ $\hat x := \hat x^{2} .$ That is, $
F_{\nicefrac{1}{\rho}}(x) := \min_{y\in \cX} \sb{ F(y) +  \frac{\rho}{2} \norm{y - x}^{2} }$, $\hat x := \argmin_{y\in \cX} \sb{ F(y) + \frac{\rho}{2} \norm{y - x}^{2}  } $ for any $x \in \cX .$ We also refer to \Cref{def:FBE_GSM} for corresponding definitions of  $\mathcal{D}_{\rho}^{2}(x)$ and $\mathcal{S}_{\rho}^{2}(x)$, which conincide for $\nu = 2.$ We also use the refined \textit{central} moment noise assumption replacing \hyperref[assum:pBM]{(p-BM)}. 

\begin{assumption}\label{assum:pBCM}
	Let $F(\cdot)$ be differentiable on $\cX$. We have access to stochastic gradients with $\Exp{\nabla f(x, \xi)} = \nabla F(x)$ and there exists $p \in (1, 2]$ such that 
	$$
	p\text{-BCM} \qquad \Exp{\norm{\nabla f(x, \xi) - \nabla F(x)}^p} \leq \sigma^p \qquad \text{for all } x \in \cX . 
	$$ 
\end{assumption}
Instead of vanilla \algname{SGD}, we consider its mini-batch variant, which samples a mini-batch of i.i.d.\,stochastic gradients $\cb{\nabla f(x_t, \xi_t^i)}_{i=1}^B  $ at each iteration $t$ and updates the decision variable via 
\begin{tcolorbox}[colback=green!5, colframe=mygreen!75!black, boxrule=0.8pt, left=4pt, right=4pt, top=3pt, bottom=3pt]
%\begin{tcolorbox}[colback=white,colframe=mygreen!75!black]
	\setlength\abovedisplayskip{0pt}
	\setlength\belowdisplayskip{0pt}
	\begin{align}
		\text{\algname{Mini-batch SGD}:} \qquad 
		x_{t+1} = \Pi_{\cX}(x_t - \eta_t g_t ) , \qquad g_t = \frac{1}{B} \sum_{i=1}^B \nabla f(x_{t}, \xi_{t}^i)  \notag 
	\end{align}
\end{tcolorbox}
Our analysis is based on the Lyapunov function inspired by the analysis in \cite{fatkhullin2024taming}:
$$
\lambda_t \eqdef F(x_t) - F^* + \eta_{t-1} \rho (F_{\nicefrac{1}{\rho}}(x_t) - F^*). 
$$
\newcommand{\flotemp}[1]{{\color{brown} #1}}
\begin{tcolorbox}[colback=white,colframe=mygreen!75!black]	
\begin{theorem}[Convergence of Mini-batch SGD under bounded central moment]
	\label{the:minibatch_SGD}
	Let Assumptions~\ref{assum:pBCM}, \ref{ass:lower_bounded_constr}, and \ref{ass:smooth} hold with exponent $\nu = 2$ and curvature constants $\ell, L > 0.$ Set $\rho = 2 (L + 2\ell),$ and suppose \algname{Mini-batch SGD} with constant step-size $\eta_t \equiv \eta = \frac{1}{2 L}$ is run with batch-size 
	$$
	B = \min\cb{ 1 , \pare{\frac{\rho \eta\sigma^2 T^{\frac 2 p}}{\lambda_1 L}}^{\frac p {2(p-1)}} }.% , \qquad \text{where } \quad C_2 \eqdef \frac{\rho}{2 L} + \frac{\rho }{2(\rho - L - 2\ell)} . 
	$$
    Then we have
    $$
        \frac{1}{T} \sum_{t=1}^T \Exp{ \rb{\mathcal{S}_{\rho + L}^2(x_t)}^{\frac{p}{2}} } \leq \pare{\frac{36 \lambda_1 L}{T}}^{\frac p 2}.
    $$
    In particular, the gradient sample complexity to find a point $x$ with $\Exp{\rb{\mathcal{S}^2_{\rho + L}(x)}^{\frac{p}{2}}} \leq \eps^p$ is upper bounded by
    $$
        T \cdot B = \Oc \pare{\frac{\lambda_1 L}{\eps^2} + \pare{\frac{\sigma \sqrt{\pare{\ell + L}\lambda_1}}{\eps^2}}^{\frac{p}{p-1}} }.
    $$
\end{theorem}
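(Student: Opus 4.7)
\begin{proofof}[Proof plan]
The plan is to derive a one-step descent for the composite Lyapunov function $\lambda_t = (F(x_t) - F^*) + \eta_{t-1}\rho(F_{1/\rho}(x_t) - F^*)$ by combining an $L$-smoothness descent for $F$ with a Moreau envelope descent for $F_{1/\rho}$, and then raise the resulting recursion to the $(p/2)$-th power in order to accommodate the heavy-tailed noise under \hyperref[assum:pBCM]{(p-BCM)}. Concretely, the $L$-smoothness descent from \Cref{ass:smooth_standard} gives $F(x_{t+1}) \leq F(x_t) - \eta \langle \nabla F(x_t), g_t\rangle + (L\eta^2/2)\norm{g_t}^2$, while repeating the Moreau envelope expansion from the proof of \Cref{thm:NC_SGD_upper} specialized to $\nu = 2$ yields $F_{1/\rho}(x_{t+1}) \leq F_{1/\rho}(x_t) + \rho\eta\langle \hat x_t - x_t, g_t\rangle + (\rho\eta^2/2)\norm{g_t}^2$, where the inner product $\langle \hat x_t - x_t, \nabla F(x_t)\rangle$ is turned into a negative multiple of $\mathcal{D}_{\rho+L}^2(x_t)$ by invoking \Cref{le:descent_FBE,le:PM_FBE_connection} with $\rho = 2(L+2\ell)$. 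Weighting the Moreau inequality by $\eta\rho$ and adding it to the smoothness descent produces a pointwise recursion of the form $\lambda_{t+1} + c_1\eta\,\mathcal{D}_{\rho+L}^2(x_t) \leq \lambda_t + c_2\eta^2\norm{g_t}^2 + \eta M_t$, where $M_t$ is zero-mean conditional on $x_t$ and $c_1, c_2$ depend only on $L, \ell$.

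To handle the critical fact that only $\Exp{\norm{g_t}^p}$ is controlled under \hyperref[assum:pBCM]{(p-BCM)} when $p < 2$, the plan is to raise the recursion to the $(p/2)$-th power \emph{before} taking expectation. Using the elementary bounds $(a+b)^{p/2} \leq a^{p/2}+b^{p/2}$ and $(a+b)^{p/2} \geq \tfrac{1}{2}(a^{p/2}+b^{p/2})$, valid for $a, b \geq 0$ and $p \in (1, 2]$, I isolate $\lambda_{t+1}^{p/2}$ and $(\mathcal{D}_{\rho+L}^2(x_t))^{p/2}$ on the left while replacing $\norm{g_t}^2$ on the right by the finite quantity $\norm{g_t}^p$. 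The latter is controlled via $\norm{g_t}^p \leq 2^{p-1}(\norm{\nabla F(x_t)}^p + \norm{g_t - \nabla F(x_t)}^p)$ together with a von Bahr--Esseen-type inequality $\CondExp{\norm{g_t - \nabla F(x_t)}^p}{x_t} \leq 2\sigma^p/B^{p-1}$, which follows from independence of the mini-batch samples under \hyperref[assum:pBCM]{(p-BCM)}. The $\norm{\nabla F(x_t)}^p$ contribution is then absorbed into $(\mathcal{S}_{\rho+L}^2(x_t))^{p/2}$ using the identity $\mathcal{S}_{\rho+L}^2 = \mathcal{D}_{\rho+L}^2$ for $\nu = 2$ (see \Cref{def:FBE_GSM}), yielding a closed recursion for $\Exp{\lambda_t^{p/2}}$. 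Telescoping from $t = 1$ to $T$ and dividing by $T$ produces the averaged stationarity bound.

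Finally, the step-size $\eta = 1/(2L)$ is chosen so that $c_2\eta^2$ is small enough for the $\norm{g_t}^2$ contribution to be dominated by the descent after absorbing the $\norm{\nabla F(x_t)}^p$ part, and the mini-batch size $B$ is then selected to balance the deterministic term $\lambda_1/T$ against the residual noise $\sigma^p\eta^p/B^{p-1}$; this reproduces the prescribed form of $B$ and the closed-form sample complexity $T \cdot B$ stated in the theorem. The principal obstacle is the incompatibility of the standard $L$-smoothness descent with \hyperref[assum:pBCM]{(p-BCM)} for $p < 2$: the descent inequality genuinely involves $\Exp{\norm{g_t}^2}$, which may be infinite in this regime. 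The $(p/2)$-th power trick circumvents this by trading the second moment for the $p$-th moment, at the cost of switching the convergence measure from $\Exp{\mathcal{S}_{\rho+L}^2}$ to $\Exp{(\mathcal{S}_{\rho+L}^2)^{p/2}}$, which is precisely the guarantee stated in \Cref{the:minibatch_SGD}.
\end{proofof}
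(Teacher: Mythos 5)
Your ingredients at the periphery match the paper's proof: the Lyapunov function $\lambda_t$, the von Bahr--Esseen bound $\mathbb{E}\|g_t-\nabla F(x_t)\|^p\le 2\sigma^p/B^{p-1}$ from \Cref{lem:app.von_Bahr_and_Essen}, the choice of $\eta$ and $B$, and the final Jensen/power-mean step. But the core mechanism you describe has a genuine gap, in two places. First, you keep a conditionally zero-mean term $M_t$ in the one-step recursion and then plan to raise the recursion to the power $p/2$ \emph{before} taking expectations. Once you take a fractional power, the property $\mathbb{E}[M_t\mid x_t]=0$ is of no use: after subadditivity you are left with $\mathbb{E}[|M_t|^{p/2}]$ (or cross terms), which neither vanishes nor telescopes, and it carries factors such as $\|\hat x_t-x_t\|^{p/2}$ and $\|x_{t+1}-x_t\|^{p/2}$ that your recursion has no remaining negative terms to cancel. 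The paper's proof is organized precisely so that \emph{no} zero-mean term survives: starting from the inequality of \cite[Eq.\ (15)]{fatkhullin2024taming}, the noise inner products $\langle\psi_t,\hat x_t-x_t\rangle$ and $\langle\psi_t,x_t-x_{t+1}\rangle$ are absorbed pathwise via Young's inequality against the negative quadratic terms $-\tfrac{\rho\eta_t(\rho-L)}{2}\|\hat x_t-x_t\|^2$ and $-\tfrac{\rho(1-\eta_t L)}{2}\|x_{t+1}-x_t\|^2$, leaving only $\tfrac{\rho\eta_t}{L}\|\psi_t\|^2$; the discussion after the theorem even states that having to use unbiasedness of the term (A) is exactly the obstruction in the $B=1$ case.

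Second, raising the \emph{per-step} recursion to the power $p/2$ destroys telescoping. To isolate both $\lambda_{t+1}^{p/2}$ and a positive multiple of $\bigl(\mathcal{D}^2_{\rho+L}(x_t)\bigr)^{p/2}$ from $\bigl(\lambda_{t+1}+c_1\eta\,\mathcal{D}^2_{\rho+L}(x_t)\bigr)^{p/2}$ the best you can do is your factor $\tfrac12$, i.e.\ $\tfrac12\lambda_{t+1}^{p/2}\le\lambda_t^{p/2}+\dots$, which compounds to a $2^T$ blow-up when unrolled; and no inequality of the form $(a+b)^q\ge a^q+c\,b^q$ with $c>0$ holds uniformly over $a,b\ge 0$ for $q<1$ (send $a\to\infty$ with $b$ fixed). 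The paper avoids this by telescoping the pathwise \emph{linear} recursion first, obtaining $\tfrac14\sum_{t}\eta_t\,\mathcal{D}^2_{\rho+L}(x_t)\le\lambda_1+\tfrac{\rho}{2L}\sum_t\eta_t\|\psi_t\|^2$ almost surely, and only then raising this single summed inequality to the power $p/2$, applying subadditivity on the right, and taking expectations. (A minor additional simplification in the paper: it never needs $\|g_t\|^p$ or the absorption of $\|\nabla F(x_t)\|^p$ into the stationarity measure, since only $\psi_t=g_t-\nabla F(x_t)$ appears after the pathwise Young step.) Repairing your plan essentially amounts to adopting these two steps.
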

\end{tcolorbox}
\begin{proof}
   Define $\psi_t \eqdef g_t - \nabla F(x_t)$. By \cite[Equation (15)]{fatkhullin2024taming}, we have  
    \begin{align*}
        \lambda_{t+1} \leq&\ \lambda_t - \frac{\rho\eta_t}{2(\rho + L)} \mathcal{D}^2_{\rho+L}(x_t) + \underbrace{\rho \eta_t \langle\psi_t, \hat x_t - x_t \rangle - \frac{\rho \eta_t \pare{\rho - L}}{2} \norm{\hat x_t - x_t}^2}_{(A)}\\
        & + \underbrace{\rho \eta_t \langle\psi_t,  x_t - x_{t+1} \rangle - \frac{\rho(1-\eta_t L)}{2} \sqnorm{x_{t+1} - x_t}}_{(B)}. 
    \end{align*}
    To control ($A$), first note that Young's inequality gives
    %\begin{align*}
        $\langle \psi_t, \hat x_t - \xt \rangle \leq \frac 1 {2\pare{\rho - L}} \norm{\psi_t}^2 + \frac {\rho - L} 2 \norm{\hat x_t - \xt}^2$
    %\end{align*}
    and hence
    \begin{align*}
        (A) \leq \frac{\rho \eta_t}{2\pare{\rho - L}} \norm{\psi_t}^2 \leq \frac {\rho \eta_t} {2L} \norm{\psi_t}^2,
    \end{align*}
    where we used $\rho - L \geq L$ in the last inequality. Using similar arguments and $\eta_t \leq \frac 1 {2L}$ also yields
    %We can bound the terms in the above inequality as 
    %$$
    %(A) \leq \frac{\rho \eta_t \sqnorm{\psi_t}}{2(\rho - (L + 2\ell))} + \frac{2(\rho - (L + 2\ell)) \rho \eta_t }{2} \sqnorm{\hat x_t - x_t} \leq \frac{\rho \eta_t \sqnorm{\psi_t}}{2(\rho - (L + 2\ell))} + \frac{\rho \eta_t}{4 (\rho + L)} \cD_{\rho + L}(x_t),
    %$$
    \begin{align*}
        (B) \leq \frac{\rho \eta_t^2 }{2(1 - \eta_t L)}\sqnorm{\psi_t} \leq \frac{\rho \eta_t }{2 L}\sqnorm{\psi_t},
    \end{align*} 
    where the second inequality follows by \Cref{le:PM_FBE_connection} with $\nu = 2$. % and the last inequality follows from $\eta_t \leq \frac{1}{2 L}.$
    Combining the inequalities above, telescoping and using $\frac{1}{4} \leq \frac \rho {2 \pare{\rho + L}}$ we obtain 
    $$
    %\Exp{\rb{
    \frac 1 4 \sum_{t=1}^T \eta_t \cD^2_{\rho + L}(x_t) \leq \lambda_1 + \frac \rho {2L} \sum_{t=1}^T \eta_t \sqnorm{\psi_t}.
    %}^{\frac{p}{2}}}
    $$
    Since the second moment of $\norm{\psi_t}$ can be infinite, we cannot take expectation here. Instead we raise both sides of above inequality to power $p/2$ and derive
    $$
%    \Exp{
    \rb{\frac 1 4 \sum_{t=1}^T \eta_t \cD^2_{\rho + L}(x_t)}^{\frac{p}{2}} 
    \leq \rb{ \lambda_1 +  \frac \rho {2L}  \sum_{t=1}^T \eta_t \sqnorm{\psi_t}}^{\frac{p}{2}}  
    \leq  \lambda_1^{\frac{p}{2}}  + \pare{ \frac \rho {2L}}^{\frac{p}{2}}  \sum_{t=1}^T \eta_t^{\frac{p}{2}}  \norm{\psi_t}^p  .
    $$
    We can control the last term of above inequality in expectation using \cref{lem:app.von_Bahr_and_Essen}:
    $$
    \Exp{\norm{\psi_t}^p} =\frac{1}{B^p}\Exp{\norm{ \sum_{i=1}^B \nf{\xt, \xi_t^i} - \nabla F(\xt)  }^p } \leq \frac{2}{B^p} \sum_{i=1}^B \Exp{\norm{\nf{\xt, \xi_t^i} - \nabla F(\xt)}^p} \leq \frac{2 \sigma^p}{B^{p-1}}. 
    $$
    Setting $\eta_t = \eta$ in the first, and choosing $B$ as in the statement in a second step hence yields
    \begin{align*}
        \Exp{\rb{\frac 1 T \sum_{t=1}^T \cD^2_{\rho + L}(x_t)}^{\frac{p}{2}}}
        &\leq \pare{\frac{4 \lambda_1}{\eta T}}^{\frac p 2} + \pare{\frac{2\rho}{L}}^{\frac p 2} \frac{2 T \sigma^p}{T^{\frac p 2}B^{p-1}}
        \leq 3 \pare{\frac{4 \lambda_1}{\eta T}}^{\frac p 2}.
    \end{align*}
    In particular, the iteration complexity to reach $\frac 1 T \sum_{t=1}^T \Exp{\cD^2_{\rho + L}(x_t)^{\frac{p}{2}}} \leq \Exp{\rb{\frac 1 T \sum_{t=1}^T \cD^2_{\rho + L}(x_t)}^{\frac{p}{2}}} \leq \eps^p$, can be upper bounded by
    \begin{align*}
        T \leq \frac{36 \lambda_1}{\eta \eps^2} = 72 \frac{\lambda_1 L}{\eps^2},
    \end{align*}
    and the sample complexity directly follows, concluding the proof.

\end{proof}

	\textbf{Discussion.} We can now compare this sample complexity result to previously derived iteration/sample complexity of vanilla \algname{SGD} from \Cref{subsec:upper_NC_SGD,sec:nonconvex.lower_bound}. For simplicity, we will compare the results in the unconstrained case, $\cX=\R^d$ with $\ell = L$, when we have $ \norm{\nabla F(\widetilde x_T) }^p =  (S_{\rho + L }^2(\widetilde x_T))^{p/2}.$ Using Jensen's inequality, the fact that $\lambda_1 \leq 3 \Delta_1 = 3 (F(x_1) - F^*)$, we can show that the above theorem implies that if $\widetilde x_T$ is sampled uniformly from the iterates of \algname{Mini-batch SGD}, $\cb{x_t}_{t\leq T}$, then it satisfies 
	$$
	\Exp{  \norm{\nabla F(\widetilde x_T) }^p } \leq \varepsilon^{p} \qquad \text{using  } \quad  T \cdot B = \cO\rb{ \frac{L \Delta_1}{ \varepsilon^{2}} +  \frac{(L\Delta_1)^{\frac{p}{2(p-1)}} \sigma^{\frac{p}{p-1}}}{ \varepsilon^{  \frac{2 p}{p-1}}}} \quad  \text{ samples} .
	$$
  This sample complexity is in line with our guarantess for vanilla SGD from \Cref{thm:NC_SGD_upper}, which implies 
$$
\Exp{ \norm{\nabla F(\widetilde{x}_T)}^2 } \leq \varepsilon^{2} \qquad \text{using  } \quad  T = \cO\rb{ \frac{\Delta_1 L_p^{\frac{1}{p-1} } G^{\frac{p}{p-1}}}{\varepsilon^{\frac{2p}{p-1}}}  } \quad  \text{ samples} .
$$
While the dependence on $\varepsilon$ is the same, there is a potential improvement in the smoothness, $L$, and the moment, $\sigma$, parameters. We should also remark that the convergence criterion for \algname{SGD} under Hölder smoothness is stronger compared to the one for \algname{Mini-batch SGD} since $\mathbb E \norm{\nabla F(\widetilde x_T)}^p \leq \mathbb E \norm{\nabla F(\widetilde x_T)}^2$. It is important to note that we do not have tightness result for sample complexity of \algname{Mini-batch SGD}, since our oracle construction in \Cref{sec:nonconvex.lower_bound} is specifically designed for \algname{SGD}, and does not extend to \algname{Mini-batch SGD}. We believe our sample complexity of \algname{Mini-batch SGD} above is unimprovable without use of adaptive methods, e.g., normalization and gradient clipping, but a more complex non-convex hard instance construction is required to prove tightness of this sample complexity.

Unfortunately, it remains unclear to us how to analyze in-expectation convergence of vanilla \algname{SGD} (without mini-batch) under standard smoothness ($\nu = 2$). The main technical obstacle is that when $B = 1$, we need to use unbiasedness of the term (A) in the proof above. However, we cannot directly take the expectation since the term (B) may not have a finite expectation when $p < 2.$

\section{Experiments}\label{sec:experiments}
We consider a constrained convex (or strongly convex) optimization problem of the form
$$
    \min_{x \in \mathcal{X}} F(x) := \|Ax - b\|_1 + \frac{\mu}{2} \|x\|_2^2, \qquad \mathcal{X} := \{x \in \mathbb{R}^d : \|x\|_\infty \leq R\} ,
$$
where $A \in \mathbb{R}^{d \times d}$, $b \in \mathbb{R}^d$, $\mu \geq 0$ is a regularization parameter, $R > 0$ is a fixed radius of the $\ell_{\infty}$ ball. To simulate heavy-tailed noise, we augment the (sub)gradient of $F(\cdot)$ with synthetic noise drawn from a two-sided Pareto distribution. Specifically, we define the stochastic gradient oracle as
$$
    \nabla f(x_t, \xi_t) := A^\top \operatorname{sign}(Ax - b) + \mu x  + \xi_t .
$$
where $\xi_t \in \mathbb{R}^d$ is an i.i.d. heavy-tailed noise vector generated as
$$
    (\xi_t)_i = s_i \cdot u_i^{-\frac{1}{\alpha}} , \quad s_i \sim \mathrm{Unif}(\{-1,1\}), \quad u_i \sim \mathrm{Unif}(0,1),
$$
with a tail index parameter $\alpha \in (1, 2].$ Notice that the above two-sided Pareto distribution has all moments $p \in (1,  \alpha)$ finite, while all moments larger or equal to $\alpha$ are infinite. This distribution is chosen to simulate the noise with infinite variance satisfying Assumptions~\ref{assum:pBM} and \ref{assum:pBCM}.

\paragraph{Parameters and evaluation.}
We fix the problem dimension to $d = 10$, generate matrix $A \sim \mathcal{N}(0,1)^{d \times d}$ and vector $b \sim \mathcal{N}(0,1)^d$ once for all experiments, and set the initial point to $x_0 = 100 \cdot \mathbf{1}_d$. The feasible region radius is set to $R = 10$, and experiments are run for $T = 1000$ iterations. The performance is evaluated based on $200$ independent runs of each algorithm reporting the average and one standard deviation across iterations.

\paragraph{Experiment 1. Sensitivity of convergence rate to step-size choice.} \begin{figure}[htbp]
	\centering
	\subfigure[$\alpha = 1.6$]{
		\includegraphics[width=0.3\textwidth]{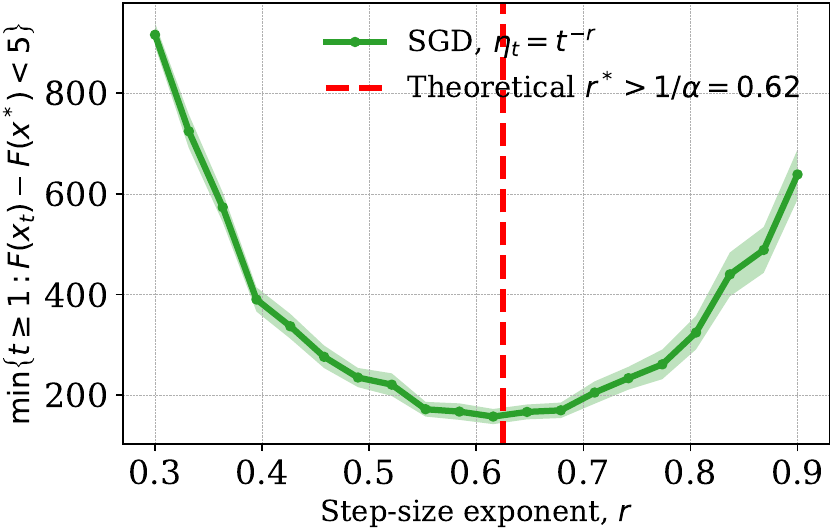}
		\label{fig:sensitivity_1_6}
	}
	\hfill
	\subfigure[$\alpha = 1.8$]{
		\includegraphics[width=0.3\textwidth]{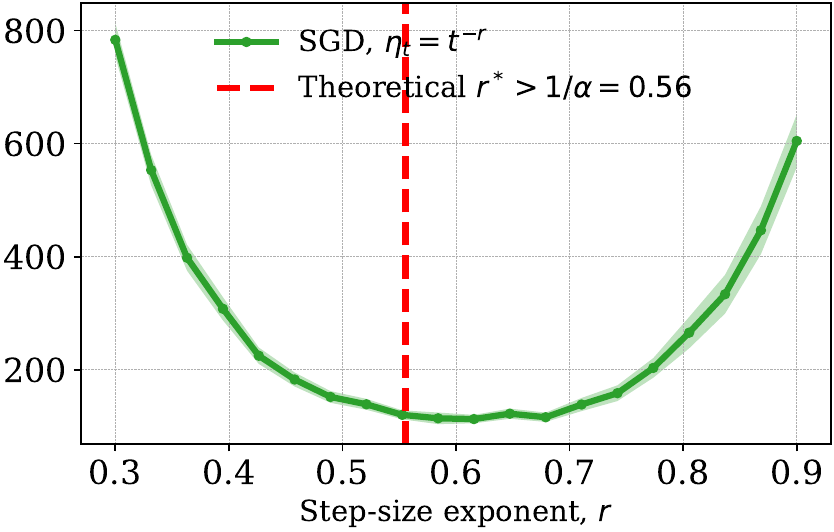}
		\label{fig:sensitivity_1_8}
	}
	\hfill
	\subfigure[$\alpha = 2.0$]{
		\includegraphics[width=0.3\textwidth]{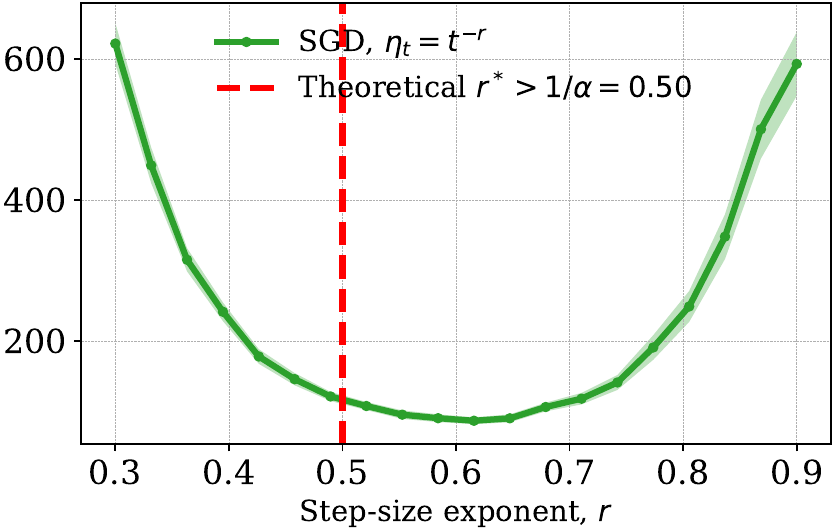}
		\label{fig:sensitivity_2_0}
	}
	\caption{Sensitivity of convergence rate to the step-size power $r$ in $\eta_t = 1/t^r$ for different values of the heavy-tail index $\alpha$. The minimal theoretical value for the optimal power $r = 1/\alpha$ is highlighted in each plot in red, and we can see that this value is often close the experimentally determined best value. }
	\label{fig:sensitivity_three}
\end{figure}
In our \Cref{thm:convex_SGD} for convex and \Cref{thm:NC_SGD_upper} for non-convex cases, the guarantee is established for any non-negative step-size sequence. However, the reccomended step-size order (ignoring parameters $G$, $D_{\cX}$, $\ell$ and $L$) is $\eta_t = 1/\sqrt[p]{t}$. In this experiment we aim to study the predictive power of our theory in a numerical experiment by varying different orders of the step-size. We set $\mu = 0$ and compare SGD with step-size $\eta_t = 1/t^r$ for $20$ different values of $r \in [0.3, 0.9]$. The sensitivity plot on \Cref{fig:sensitivity_three} shows how many iterations/samples $T$ it takes to reach the accuracy level $F(x_T) - F(x^*) \leq 5.$ vs. the step-size power $r.$ The red dashed line indicates the minimal theoretically reccomended value $ r = 1/\alpha.$ All three plots show that SGD is not very sensitive to the choice of step-size order $r$, and the value of $r = 1/\alpha$ is often close the experimentally determined best value (with lowest possible number of iterations).

\paragraph{Experiment 2. Effect of output selection in \algname{SGD} under heavy-tailed noise.} 
\begin{figure}[htbp]
	\centering
	\subfigure[$\alpha = 1.2$]{
		\includegraphics[width=0.3\textwidth]{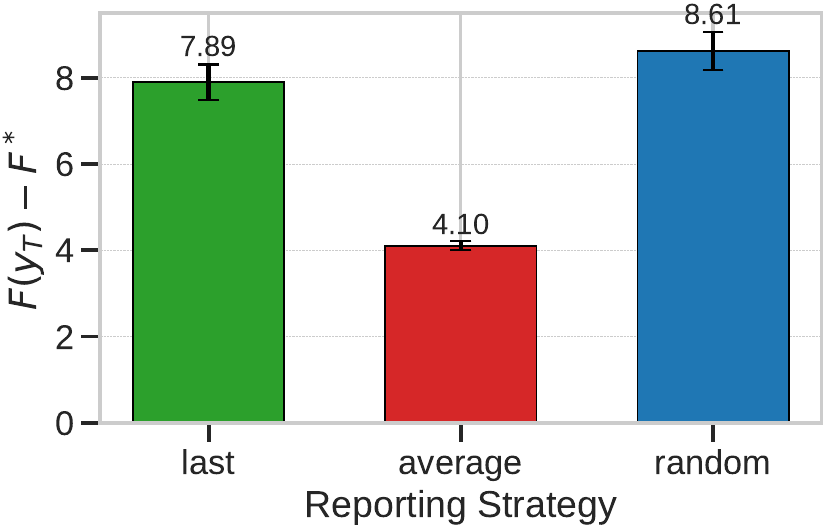}
		\label{fig:reporting_strategies_1_2}
	}
	\hfill
	\subfigure[$\alpha = 1.6$]{
		\includegraphics[width=0.3\textwidth]{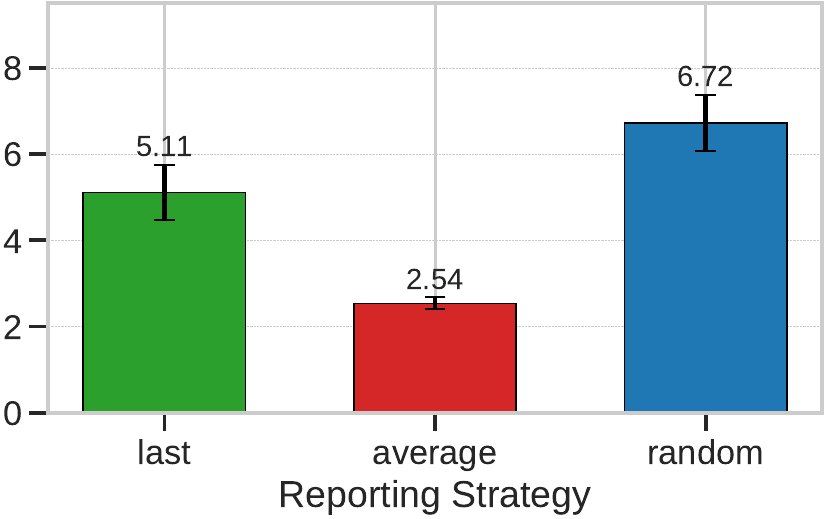}
		\label{fig:reporting_strategies_1_6}
	}
	\hfill
	\subfigure[$\alpha = 2.0$]{
		\includegraphics[width=0.3\textwidth]{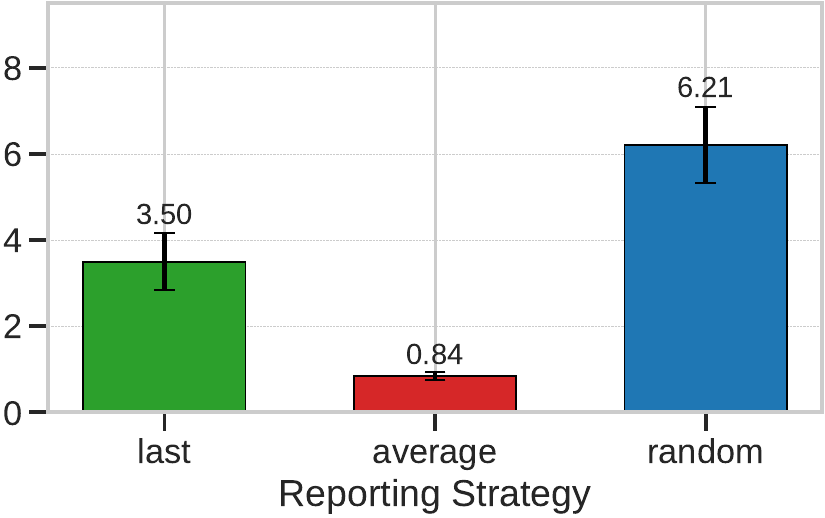}
		\label{fig:reporting_strategies_2_0}
	}
	\caption{Comparison of output selection strategies in stochastic gradient descent (SGD) under convexity ($\mu = 0$) and heavy-tailed noise. We evaluate the final suboptimality $F(y_T) - F^*$, where $y_T$ is the reported output, using three strategies: the last iterate, the average of all iterates, and a uniformly sampled iterate. Averaging the iterates yields the lowest suboptimality, significantly outperforming both the last iterate and random selection. 
	}
	\label{fig:reporting_strategies}
\end{figure}
As we have seen, depending on convexity assumptions, our theory suggests different output strategies for \algname{SGD}. It is known that in convex setting under bounded variance assumption $p=2$, the last iterate converges with optimal complexity \citep{zamani2023exact}, however, our \Cref{thm:convex_SGD} requires to output the average iterate. In strongly convex setting, to obtain the optimal convergence in function value, it is typical to output the average iterate \citep{stich2019unified}, however, interestingly our \Cref{thm:SCprojectedSGD_improved} requires randomly sampled output for $p < 2$. To investigate the impact of these different strategies, we compare the function suboptimality of each for different noise levels. In this experiment, the step-size is set to $\eta_t = 1/\sqrt{t}$, as motivated by standard theory in the convex setting. After $T = 1000$ iterations, we compare three strategies for producing the final output $y_T$: 
(i) the last iterate $x_T$,
(ii) the average $\widetilde{x}_T = \frac{1}{T} \sum_{t=1}^T x_t$, and
(iii) a randomly sampled iterate $\bar{x}_T  \sim \mathrm{Uniform}\{x_1, \ldots, x_T\}$.

For each strategy, we measure the expected suboptimality $F(y_T) - F^*$, where $F^*$ is the optimal objective value computed deterministically without noise. %Results are averaged over $100$ trials. 
As shown in Figure~\ref{fig:reporting_strategies}, the average iterate significantly outperforms the other two strategies. This highlights the stabilizing effect of averaging in the presence of heavy-tailed stochastic gradients and is in line with theoretical guarantee in \Cref{thm:convex_SGD}.

\paragraph{Experiment 3. Comparison to \algname{Clip-SGD} in strongly convex setting.} 
\begin{figure}[htbp]
	\centering
	\subfigure[$\alpha = 1.2$]{
		\includegraphics[width=0.3\textwidth]{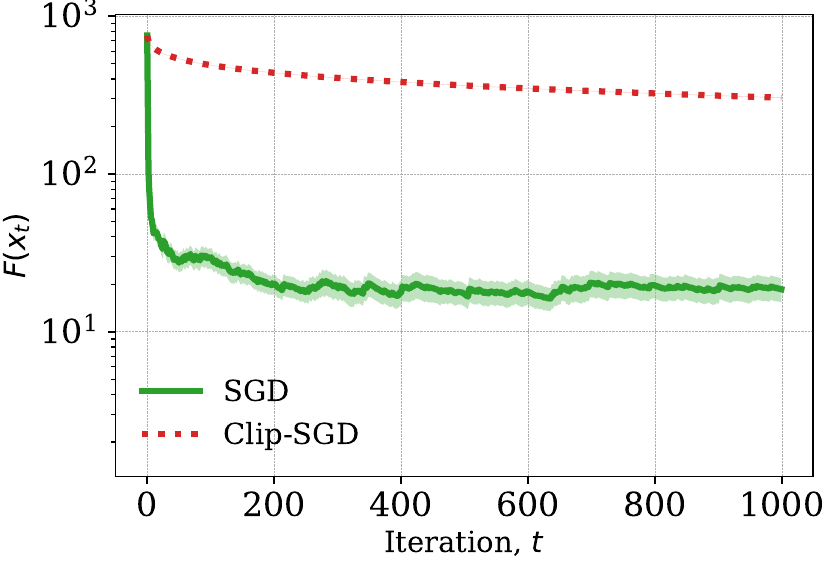}
		\label{fig:SC_results_1_2}
	}
	\hfill
	\subfigure[$\alpha = 1.6$]{
		\includegraphics[width=0.3\textwidth]{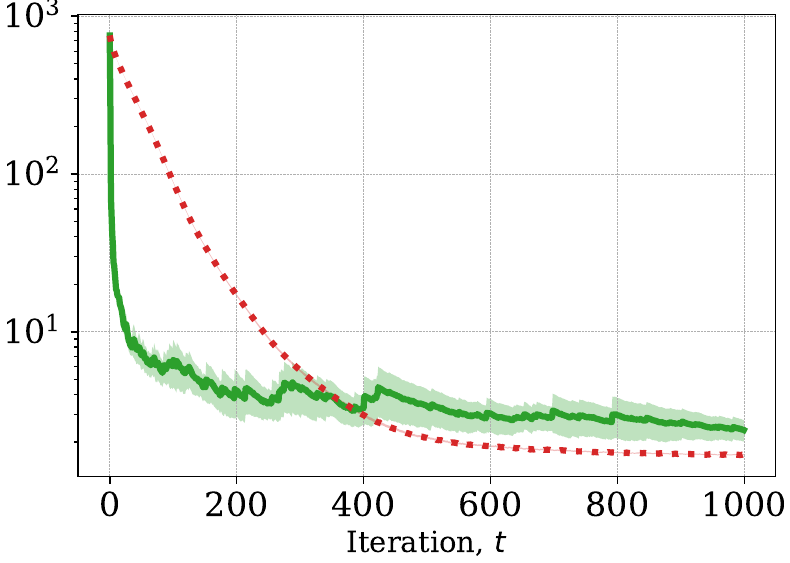}
		\label{fig:SC_results_1_6}
	}
	\hfill
	\subfigure[$\alpha = 2.0$]{
		\includegraphics[width=0.3\textwidth]{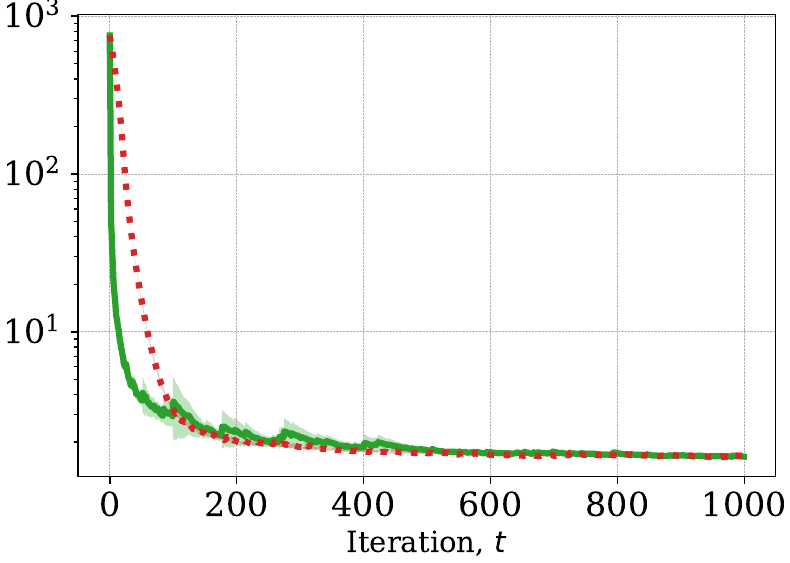}
		\label{fig:SC_results_2_0}
	}
	\caption{
		Comparison of \algname{SGD} and \algname{Clip-SGD} in the strongly convex setting for different values of the heavy-tail index $\alpha \in \{1.2, 1.6, 2.0\}$. Both algorithms use the same diminishing step-size schedule $\eta_t = 1/(\mu t)$, where $\mu > 0$ is the strong convexity parameter. For \algname{Clip-SGD}, the gradient clipping threshold is set as $\lambda_t = t^{\alpha - 1}$ without tuning, following theoretical recommendations from \cite{zhang2020adaptive,sadiev2023high}.
		Results show that \algname{Clip-SGD} improves the overall convergence stability under heavy-tailed noise, especially when $\alpha$ is small (e.g., $\alpha = 1.2, 1.6$), where standard \algname{SGD} suffers from high variance. However, in the initial optimization phase, \algname{SGD} often outperforms its clipped variant thanks to larger update steps. 
	}
	\label{fig:comparison_to_Clip_SGD}
\end{figure}
In this experiment, we study the impact of gradient clipping in the strongly convex setting under heavy-tailed noise. Specifically, we compare standard (projected) stochastic gradient descent (SGD) with its clipped variant (\algname{Clip-SGD}). The algorithms are configured with the same step-size $\eta_t = 1 / (\mu t) $ as follows:
	\[
	\text{\algname{SGD}: } \qquad x_{t+1} = \Pi_{\mathcal{X}} \left( x_t - \eta_t \nabla f(x_t, \xi_t) \right) , 
	\]	
	\[
		\text{\algname{Clip-SGD}: } \qquad x_{t+1} = \Pi_{\mathcal{X}} \left( x_t - \eta_t g_t \right), \quad g_t = \operatorname{clip}\left( \nabla f(x_t, \xi_t), \lambda_t \right) 
	\]
	with $\operatorname{clip}(v, \lambda) := v \cdot \min\left\{1, \lambda / \|v\|_2 \right\}$. In \algname{Clip-SGD}, the clipping thresholds are set as $\lambda_t = t^{\alpha - 1}$ based on the theoretical analysis in \cite{zhang2020adaptive, sadiev2023high}, with no tuning.

Figure~\ref{fig:comparison_to_Clip_SGD} presents the evolution of the mean and the standard deviation of the objective value $F(x_t)$ over iterations for each method. As expected, \algname{Clip-SGD} significantly reduces the variance of \algname{SGD} across all noise levels. In some situations, e.g., $\alpha = 1.6$, this allows  \algname{Clip-SGD} to outperform \algname{SGD} after $t \geq 400$ iterations, where vanilla \algname{SGD} suffers from large variance and slow convergence. On the other hand, the experiment suggest that \algname{SGD} is sometimes competitive and can even outperform \algname{Clip-SGD}. First, we observe that when $\alpha$ increases toward the light-tailed regime ($\alpha = 2.0$), the performance gap narrows, confirming that clipping is especially beneficial under heavy-tailed stochasticity. Second, perhaps most surprisingly, in the most heavy-tailed regime $\alpha = 1.2$ and in the early phase of medium regime $\alpha = 1.6$, \algname{SGD} can significantly outperform \algname{Clip-SGD}. This happens because  \algname{Clip-SGD} is initially making very small steps, perhaps due to untuned clipping sequence $\lambda_t = t^{\alpha - 1}$, while \algname{SGD} can make large steps, quickly converging to a certain noise level. 

\section*{Conclusion}
We have presented a comprehensive analysis of vanilla \algname{SGD} under heavy-tailed noise, establishing sharp convergence guarantees across convex, strongly convex, and non-convex settings under minimal $p$-th moment assumptions. Our results show that \algname{SGD} remains effective---even without adaptivity---in regimes with unbounded variance, achieving minimax optimal rates in convex and strongly convex cases, and tight algorithm-specific bounds in the non-convex setting.

Several avenues for future research remain open. One promising direction lies in refining convergence guarantees in smooth settings to better exploit the problem structures in the differentiable setting. Another is extending the analysis of heavy-tailed \algname{SGD} to distributed, federated, or communication-constrained environments. Third, our study assumes access to unbiased stochastic gradients; understanding the interplay between heavy tails and biased or corrupted gradient information arising, for example, from compression or privacy constraints merits further investigation.

\section*{Acknowledgments}
This work is supported by ETH AI Center Doctoral Fellowship, Swiss National
Science Foundation (SNSF) Project Funding No. 200021-207343, and the Office of Naval Research grant N00014-24-1-2654.

\bibliographystyle{alpha}
\bibliography{biblio}

\appendix

%%%%%%%%%%%%%%%%%%%%%%%%%%%%%%%%%%%%%%%%%%%%%%%%%%%%%%%%%%%%%%%%%%%%%%%%%%%%%%%%%%%%%%%%%%%%%%%%%%%%%%%%%%%%%%%%%%%%%%%%%%%%%%%%%%%%%%%%%%%%%%%%%%%%%%%%%%%%%%%%%%%%%%%%%%%%%%%%%%%%%%%%%%%%%%%%%%%%%%%%%%%%%%%%%%%%%%%%%%%%%%%%%

\newpage

\tableofcontents	

\newpage

\section{Useful Lemma}\label{appendix:useful_lemma}

First, we recall the definition of FBE of order $\nu$ that was given in \eqref{eq:FBE_order_nu}:
$$
\mathcal{D}_{\rho}^{\nu}(x) := - \frac{\nu \, \rho^{\frac{1}{\nu-1}} }{\nu - 1} \min_{y \in \cX} Q_\rho^{\nu}(x, y), \qquad
Q_\rho^{\nu}(x, y) := \langle \nabla F(x), y - x \rangle + \frac{\rho}{\nu} \norm{y - x}^{\nu}. 
$$
The following lemma is a modified version of Step I in the proof of the main theorem in \citep{fatkhullin2024taming}. This descent type lemma is useful in the proof of \Cref{thm:NC_SGD_upper}. 
\begin{lemma}\label{le:descent_FBE}
	For any $x\in \cX$ and any $\rho_1 \geq \rho + L$ we have $F_{1/\rho}^{\nu}(x) \leq F(x) - \fr{\nu-1}{\nu \rho_1^{\frac{1}{\nu-1}}} \mathcal D_{\rho_1}^{\nu}(x).$
\end{lemma}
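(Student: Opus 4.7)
The plan is to combine the standard upper bound from Hölder smoothness (Assumption~\ref{ass:smooth} with curvature exponent $\nu$ and constant $L = L_\nu$) with the definition of the Forward-Backward Envelope to pointwise dominate the Moreau envelope's argmin objective. Concretely, for any $x, y \in \cX$, Hölder smoothness gives
\[
F(y) \;\leq\; F(x) + \langle \nabla F(x), y - x\rangle + \frac{L}{\nu}\|y-x\|^{\nu}.
\]
Adding $\frac{\rho}{\nu}\|y-x\|^{\nu}$ to both sides and taking $\min_{y \in \cX}$ yields
\[
F_{1/\rho}^{\nu}(x) \;\leq\; F(x) + \min_{y \in \cX} Q_{L+\rho}^{\nu}(x, y).
\]

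Second, I would observe the straightforward monotonicity $\rho_1 \mapsto \min_{y \in \cX} Q_{\rho_1}^{\nu}(x,y)$: since $\|y-x\|^\nu \geq 0$, enlarging the regularization weight can only make the infimum larger (less negative). Hence for any $\rho_1 \geq L + \rho$,
\[
\min_{y \in \cX} Q_{L+\rho}^{\nu}(x, y) \;\leq\; \min_{y \in \cX} Q_{\rho_1}^{\nu}(x, y).
\]

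Third, I would invoke the very definition of the FBE in \eqref{eq:FBE_order_nu}, which, after rearrangement, reads
\[
\min_{y \in \cX} Q_{\rho_1}^{\nu}(x, y) \;=\; -\,\frac{\nu - 1}{\nu\,\rho_1^{\frac{1}{\nu-1}}}\, \mathcal{D}_{\rho_1}^{\nu}(x).
\]
Substituting this into the chain above produces the claimed inequality
\[
F_{1/\rho}^{\nu}(x) \;\leq\; F(x) - \frac{\nu - 1}{\nu\,\rho_1^{\frac{1}{\nu-1}}}\,\mathcal{D}_{\rho_1}^{\nu}(x).
\]

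There is no real obstacle here: the argument is a three-line chain consisting of (i) the Hölder upper model, (ii) monotonicity in $\rho_1$, (iii) the algebraic identity expressing $\min_y Q_{\rho_1}^\nu$ in terms of $\mathcal{D}_{\rho_1}^\nu$. The only point worth checking carefully is the sign/factor in step (iii), since $\mathcal{D}_{\rho_1}^\nu(x) \geq 0$ (because $\min_y Q_{\rho_1}^\nu \leq 0$, as $y = x$ yields $0$), so the resulting term indeed contributes a nonpositive correction to $F(x)$, consistent with the usual descent interpretation.
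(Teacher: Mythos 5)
Your proof is correct and follows essentially the same route as the paper: both apply the Hölder upper bound to majorize the Moreau-envelope objective by $F(x) + Q_{L+\rho}^{\nu}(x,\cdot)$ and then invoke the definition of $\mathcal{D}_{\rho_1}^{\nu}$. The only (cosmetic) difference is that the paper evaluates the bound at the specific minimizer $x^+$ of $Q_{\rho_1}^{\nu}(x,\cdot)$ and drops the nonpositive excess term $\frac{\rho+L-\rho_1}{\nu}\norm{x^+-x}^{\nu}$, whereas you take the minimum at level $L+\rho$ and use monotonicity of $\rho_1 \mapsto \min_{y}Q_{\rho_1}^{\nu}(x,y)$ — the two steps are equivalent.
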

\begin{proof}
 Notice that for any $x\in \cX $, we have for any $x^+ \in \cX $
$$
F_{1/\rho}^{\nu}( x)  = F( \hat x) + \frac{\rho}{\nu} \norm{\hat x - x}^\nu  \leq F(x^+) + \frac{\rho}{\nu} \norm{x^+ - x }^{\nu},
$$
where $\hat x \in \argmin_{y \in \cX} F(y) + \frac \rho \nu \norm{y - x}^\nu$ as before. We set $x^+ := \argmin_{y \in \cX} \, \langle \nabla F(x), y \rangle + \frac{\rho_1}{\nu} \norm{y - x}^{\nu} $ with $\rho_1 \geq \rho + L$. Then by Hölder smoothness  of $F(\cdot)$ (upper bound in \Cref{ass:smooth})
\begin{eqnarray}%\label{eq:deterministic_descent}
    F_{1/\rho}^{\nu}( x) &\leq& F\left({x}^+\right) + \frac{\rho}{\nu} \norm{x^+ - x}^{\nu} \notag \\
& \leq& F\left(x\right)+\left\langle\nabla F\left(x\right), {x}^+ - x\right\rangle + \frac{L}{\nu} \norm{x^+ - x}^{\nu} + \frac{\rho}{\nu} \norm{x^+ - x}^{\nu}  \notag \\
& = & F\left(x\right) - \fr{\nu-1}{\nu \rho_1^{\frac{1}{\nu-1}}} \cD_{\rho_1}^{\nu}( x  )   + \frac{\rho + L - \rho_1}{\nu} \norm{x^+ - x}^{\nu} \notag \\
& \leq & F\left(x\right) - \fr{\nu-1}{\nu \rho_1^{\frac{1}{\nu-1}}} \cD_{\rho_1}^{\nu}( x  )  .
\end{eqnarray}
where the last equality holds by definitions of $x^+$, $ \cD_{\rho}^{\nu}( x  ) $ and the last step is due to condition $\rho_1 \geq \rho + L$. 
\end{proof}

Recall the definition of the proximal point of order $\nu \in [1, 2]$
$$
\hat x^{\nu} := \argmin_{y\in \cX} \sb{ F(y) + \frac{\rho}{\nu} \norm{y - x}^{\nu}  } \quad \text{for any } x \in \cX .
$$
\begin{lemma}\label{le:PM_FBE_connection}
	Let $\nu \in (1, 2]$. For any $x\in \cX$ and any $\rho > \frac{\rho_1 + 2\,\ell}{\nu}$ we have $$
    \norm{\hat x^\nu - x}^\nu  \leq \fr{\nu-1}{\nu \rho_1^{\frac{1}{\nu-1}}} \cdot \frac{\mathcal D_{\rho_1}^{\nu}(x) }{\rho - \frac{\rho_1 + 2\,\ell}{\nu} }.
    $$
\end{lemma}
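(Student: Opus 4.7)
The plan is to combine three ingredients—the first-order optimality condition of $\hat x^\nu$, two applications of the lower Hölder smoothness bound, and the definition of the FBE. The key observation is that a naive use of only the functional optimality $F(\hat x^\nu) + \frac{\rho}{\nu}\norm{\hat x^\nu - x}^\nu \leq F(x)$ yields the weaker denominator of the form $\rho - \ell - \rho_1$; extracting the sharper constant $\nu\rho - 2\ell - \rho_1$ claimed in the statement requires exploiting the variational inequality characterizing $\hat x^\nu$ directly.

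First I would invoke first-order optimality of $\hat x^\nu$: since $\nu > 1$ the penalty $\norm{\,\cdot\, - x}^\nu$ is continuously differentiable, so for every $z \in \cX$,
\[
\langle \nabla F(\hat x^\nu) + \rho\,\norm{\hat x^\nu - x}^{\nu-2}(\hat x^\nu - x),\, z - \hat x^\nu\rangle \geq 0.
\]
Plugging in $z = x \in \cX$ gives $\langle \nabla F(\hat x^\nu), x - \hat x^\nu\rangle \geq \rho\,\norm{\hat x^\nu - x}^\nu$. Combining this with lower Hölder smoothness centered at $\hat x^\nu$, $F(x) \geq F(\hat x^\nu) + \langle \nabla F(\hat x^\nu), x - \hat x^\nu\rangle - \frac{\ell}{\nu}\norm{x - \hat x^\nu}^\nu$, yields the sharper estimate $F(x) - F(\hat x^\nu) \geq \tfrac{\nu\rho - \ell}{\nu}\norm{\hat x^\nu - x}^\nu$.

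Next, I would apply lower Hölder smoothness a second time, now centered at $x$, to obtain the matching upper estimate $F(x) - F(\hat x^\nu) \leq -\langle \nabla F(x), \hat x^\nu - x\rangle + \tfrac{\ell}{\nu}\norm{\hat x^\nu - x}^\nu$. Chaining the two bounds on $F(x) - F(\hat x^\nu)$ produces
\[
\tfrac{\nu\rho - 2\ell}{\nu}\norm{\hat x^\nu - x}^\nu \leq -\langle \nabla F(x), \hat x^\nu - x\rangle.
\]
Finally, I would invoke the FBE: since $Q_{\rho_1}^\nu(x, \hat x^\nu) \geq \min_{y\in\cX} Q_{\rho_1}^\nu(x, y) = -\tfrac{\nu-1}{\nu \rho_1^{1/(\nu-1)}}\mathcal{D}_{\rho_1}^\nu(x)$, we can upper-bound $-\langle \nabla F(x), \hat x^\nu - x\rangle \leq \tfrac{\nu-1}{\nu \rho_1^{1/(\nu-1)}}\mathcal{D}_{\rho_1}^\nu(x) + \tfrac{\rho_1}{\nu}\norm{\hat x^\nu - x}^\nu$. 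Substituting and collecting the $\norm{\hat x^\nu - x}^\nu$ terms yields $\tfrac{\nu\rho - 2\ell - \rho_1}{\nu}\norm{\hat x^\nu - x}^\nu \leq \tfrac{\nu-1}{\nu \rho_1^{1/(\nu-1)}}\mathcal{D}_{\rho_1}^\nu(x)$, and the hypothesis $\rho > \tfrac{\rho_1 + 2\ell}{\nu}$ makes the left-hand coefficient positive, so dividing gives the claim.

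The main pitfall to avoid is giving up the factor of $\nu$ at the optimality step: using only $F(\hat x^\nu) + \tfrac{\rho}{\nu}\norm{\hat x^\nu - x}^\nu \leq F(x)$ together with lower Hölder smoothness at $x$ gives merely $\tfrac{\rho - \ell}{\nu}\norm{\hat x^\nu - x}^\nu \leq -\langle \nabla F(x), \hat x^\nu - x\rangle$ and ultimately the suboptimal condition $\rho > \rho_1 + \ell$. Invoking the variational inequality replaces $\rho$ by $\nu\rho$ in the lower bound on $F(x) - F(\hat x^\nu)$, which is precisely the improvement needed so that the application inside the proof of \Cref{thm:NC_SGD_upper} (where $\rho_1 = \rho + L_p$) goes through under the stated condition $\rho > \tfrac{L_p + 2\ell_p}{p-1}$.
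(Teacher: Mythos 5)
Your proposal is correct and follows essentially the same route as the paper's proof: the variational inequality at $\hat x^\nu$ with the test point $x$, two applications of the lower Hölder bound (which the paper sums directly to bound $\langle \nabla F(\hat x^\nu)-\nabla F(x), x-\hat x^\nu\rangle \leq \frac{2\ell}{\nu}\norm{\hat x^\nu - x}^\nu$, whereas you chain them through $F(x)-F(\hat x^\nu)$ — an algebraically identical step), and the FBE definition to absorb $\langle \nabla F(x), x - \hat x^\nu\rangle$ at the cost of $\frac{\rho_1}{\nu}\norm{\hat x^\nu - x}^\nu$. The final rearrangement under $\rho > \frac{\rho_1 + 2\ell}{\nu}$ matches the paper exactly.
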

\begin{proof}
    By the definition of $\hat x^\nu$ and the optimality condition, we have for any $u\in \cX$
    $$
    \<\nabla F(\hat x^{\nu}) + \rho (\hat x^\nu - x) \norm{\hat x^\nu - x}^{\nu-2}, u - \hat x^\nu \> \geq 0 .
    $$
    Setting $u = x,$ we obtain
    \begin{equation}\label{eq:PM_split_I_II} \rho \norm{\hat x^\nu - x}^\nu \leq \langle\nabla F(\hat x^{\nu}), x - \hat x^\nu \rangle = \underbrace{\langle\nabla F(x), x - \hat x^\nu \rangle}_{\text{(I)}} + \underbrace{\langle\nabla F(\hat x^{\nu}) - \nabla F(x), x - \hat x^\nu \rangle}_{\text{(II)}} . \end{equation}
    We will bound terms (I) and (II) separately. First, by the definition of FBE: 
    \begin{eqnarray*}
        \text{(I)} &=& \< \nabla F(x), x - \hat x^\nu \> - \frac{\rho_1}{\nu}\norm{\hat x^\nu - x}^\nu + \frac{\rho_1}{\nu}\norm{\hat x^\nu - x}^\nu \\
        &\leq& \max_{y\in\cX} \cb{ \< \nabla F(x), x - y \> - \frac{\rho_1}{p}\norm{y - x}^\nu } + \frac{\rho_1}{\nu}\norm{\hat x^\nu - x}^\nu \\
        &=& - \min_{y\in\cX} Q_{\rho_1}^\nu(x, y) + \frac{\rho_1}{\nu}\norm{\hat x^\nu - x}^\nu \\ 
        &=& \fr{\nu-1}{\nu \rho_1^{\frac{1}{\nu-1}}} \mathcal D_{\rho_1}^{\nu}(x) + \frac{\rho_1}{\nu}\norm{\hat x^\nu - x}^\nu . 
    \end{eqnarray*}
    Second, by Hölder smoothness \Cref{ass:smooth} we have for any $x, y\in \cX$
    $$
	- \frac{\ell}{\nu} \norm{x-y}^{\nu} \leq F(x) - F(y) - \langle \nabla F(y), x - y \rangle ,   
	$$
    $$
	- \frac{\ell}{\nu} \norm{x-y}^{\nu} \leq F(y) - F(x) + \langle \nabla F(x), x - y \rangle   .
	$$
    Summing up the above two inequalities for $x = x$ and $y = \hat x^{\nu}$, we can bound the term:
    $$
    \text{(II)} = - \langle\nabla F(\hat x^{\nu}) - \nabla F(x), \hat x^\nu  - x \rangle \leq \frac{2 \ell}{\nu} \norm{\hat x^{\nu} - x}^\nu . 
    $$
    Using the two upper bounds in \eqref{eq:PM_split_I_II}, we derive 
    $$
     \rho \norm{\hat x^\nu - x}^\nu \leq \fr{\nu-1}{\nu \rho_1^{\frac{1}{\nu-1}}} \mathcal D_{\rho_1}^{\nu}(x) + \frac{\rho_1 + 2\,\ell}{\nu}\norm{\hat x^\nu - x}^\nu .
    $$
    It remains to rearrange and use the restriction for $\rho.$
\end{proof}

%%%%%%%%%%%%%%%%%%%%%%%%%%%%%%%%%%%%%%%%%%%%%%%%%%%%%%%%%%%%%%%%%%%%%%%%%%%%%%%%%%%%%%%%%%%%%%%%%%%%%%%%%%%%%%%%%%%%%%%%%%%%%%%%%%%%%%%%%%%%%%%%%%%%%%%%%%%%%%%%%%%%%%%%%%%%%%%%%%%%%%%%%%%%%%%%%%%%%%%%%%%%%%%%%%%%%%%%%%%%%%%%%%%%%%%%%%%%%%%%%%%%%%%%%%%%%%%%%%%%%%%%%%%%%%%%%%%%%%%%%%%%%%%%%%%%%%%%%%%%%%%%%%%%%%%%%%%

\begin{lemma}\label{lem:app.von_Bahr_and_Essen}[Lemma 10 in \cite{hubler2024gradient}]
	Let $p \in [1,2]$, and $X_1, \ldots, X_n \in \R^d$ be a martingale difference sequence (MDS), i.e., $\Exp[X_{j-1}, \ldots, X_1]{X_{j}} = 0$ a.s. for all $j = 1, \ldots, n$ satisfying 
	\begin{align*}
		\Exp{\norm{X_j}^p } < \infty \qquad \text{for all } j = 1, \ldots, n. 
	\end{align*}
	Define $S_n :=  \sum_{j=1}^n X_j$, then 
	\begin{align*}
		\Exp{\norm{S_n}^p} \leq 2 \sum_{j=1}^n \Exp{\norm{X_j}^p}.
	\end{align*}
\end{lemma}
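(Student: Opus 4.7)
The plan is to prove this by induction on $n$, exploiting the $(p-1)$-Hölder smoothness of $\norm{\cdot}^p$ (the same ingredient already used throughout the paper), together with the martingale difference property to kill a cross term. The base case $n=1$ reduces to $\Exp{\norm{X_1}^p} \leq 2\Exp{\norm{X_1}^p}$, which is trivial. For the inductive step, assume the claim for $n-1$ and write $S_n = S_{n-1} + X_n$.

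For the core step, I would apply inequality \eqref{eq:Holder_convex} with $v = S_{n-1}$ and $w = X_n$, provided $S_{n-1} \neq 0$, to obtain
\begin{align*}
\norm{S_n}^p \leq \norm{S_{n-1}}^p + p \frac{\langle S_{n-1}, X_n\rangle}{\norm{S_{n-1}}^{2-p}} + 2^{2-p} \norm{X_n}^p.
\end{align*}
Letting $\cF_{n-1} \eqdef \sigma(X_1,\ldots,X_{n-1})$ and taking the conditional expectation $\Exp[\cF_{n-1}]{\cdot}$, the middle term vanishes because $S_{n-1}$ and $\norm{S_{n-1}}^{2-p}$ are $\cF_{n-1}$-measurable while $\Exp[\cF_{n-1}]{X_n} = 0$ by the MDS assumption. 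On the event $\{S_{n-1} = 0\}$ the bound $\norm{S_n}^p = \norm{X_n}^p$ holds trivially, so combining both cases and taking total expectations yields
\begin{align*}
\Exp{\norm{S_n}^p} \leq \Exp{\norm{S_{n-1}}^p} + 2^{2-p} \Exp{\norm{X_n}^p}.
\end{align*}
Since $p \in [1,2]$ gives $2^{2-p} \leq 2$, applying the inductive hypothesis closes the induction:
\begin{align*}
\Exp{\norm{S_n}^p} \leq 2\sum_{j=1}^{n-1}\Exp{\norm{X_j}^p} + 2\Exp{\norm{X_n}^p} = 2\sum_{j=1}^n \Exp{\norm{X_j}^p}.
\end{align*}

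\textbf{Main obstacle.} The only delicate point is the degeneracy $S_{n-1} = 0$, where the Hölder smoothness inequality \eqref{eq:Holder_convex} is not literally stated; handling it requires the brief case split indicated above (or, equivalently, a limiting argument letting $v \to 0$ through nonzero vectors, using continuity of the right-hand side in $v$ when $p > 1$ and directly for $p = 1$). A secondary subtlety is that the scalar prefactor $2^{2-p}$ coming from \eqref{eq:Holder_convex} is tight at $p=2$ (giving constant $1$) and loosens to $2$ at $p=1$, which is precisely why the stated constant in the conclusion is $2$ rather than something sharper; no effort to improve this constant is needed for the applications in the paper.
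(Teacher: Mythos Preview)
The paper does not give its own proof of this lemma: it is simply quoted from \cite{hubler2024gradient} (where it appears as Lemma~10) and placed in the ``Useful Lemma'' appendix without argument. So there is no proof in the paper to compare against.

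Your argument is correct. The induction via the Hölder-smoothness inequality \eqref{eq:Holder_convex} is clean and self-contained, and in fact dovetails nicely with the rest of the paper since \eqref{eq:Holder_convex} is the same tool driving the main results in the convex, strongly convex, and non-convex sections. One small point worth making explicit (you implicitly rely on it when taking conditional expectations) is the integrability of the cross term $p\,\norm{S_{n-1}}^{p-2}\langle S_{n-1},X_n\rangle$: by Cauchy--Schwarz and then Hölder with exponents $\tfrac{p}{p-1}$ and $p$,
\[
\Exp{\norm{S_{n-1}}^{p-1}\norm{X_n}} \leq \Exp{\norm{S_{n-1}}^p}^{\frac{p-1}{p}}\Exp{\norm{X_n}^p}^{\frac{1}{p}} < \infty,
\]
using the inductive hypothesis for the first factor. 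With that in hand, pulling the $\cF_{n-1}$-measurable factor $S_{n-1}\norm{S_{n-1}}^{p-2}$ out of the conditional expectation is justified and the MDS property kills the term as you state. The case split at $S_{n-1}=0$ is handled exactly as you describe. Nothing is missing.
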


\section{Missing Proofs}
\label{sec:app.missing_proofs}

\begin{proof}[Proof of \Cref{prop:nonconvex.sgd_lb.deterministic_small_steps}]
    This construction follows the idea from \citep{NSGDM_LzLo2023Huebler}, and adapts it to \algname{SGD} and Hölder-Smoothness.
    W.l.o.g.\ assume $x_1 = 0$ and define $y_t \coloneqq \sum_{\kappa = 1}^{t - 1} 2 \eps \eta_t$. Let
    \begin{align*}
        T^* \coloneqq  \sup\set{T \in \Nb \mid 2 \eps y_T \leq \frac{\Delta_1}2}
    \end{align*}
    and define $\delta_\nu \coloneqq \pare{\frac{2\eps}{L}}^{\frac 1 {\nu-1}}$,
    \begin{align*}
        F(x) \coloneqq \begin{cases}
            \Delta_1 - 2 \eps x, & x \leq y_{T^*}\\
            \Delta_1 - 2 \eps x + \frac L {\nu} \pare{x - y_{T^*}}^{\nu}, & y_{T^*} < x \leq y_{T^*} + \delta_\nu \\
            \Delta_1 - 2 \eps y_{T^*} - 2 \eps \delta_\nu+ \frac L {\nu}\delta_\nu^{\nu} & y_{T^*} + \delta_\nu < x.
        \end{cases}
    \end{align*}
    Note that, by the definition of $T^*$ and our assumption $\eps^{\frac{\nu} {\nu-1}} \leq \frac{\nu} {\nu-1} \frac{\Delta_1}{4}\pare{\frac{L}{2}}^{\frac 1 {\nu-1}}$, we have
    \begin{align*}
        F(x)
        \geq F\pare{y_{T^*} + \delta_\nu}
        &= \Delta_1 - 2 \eps y_{T^*} - 2 \eps \delta_\nu+ \frac L {\nu}\delta_\nu^{\nu}
        \geq \frac{\Delta_1}2 - 2 \eps \delta_\nu+ \frac L {\nu}\delta_\nu^{\nu}
        \geq 0
    \end{align*}
    and hence $F(x_1) - \inf_{x \in \R} F(x) \leq \Delta_1$. Furthermore $F$ is $(0, L, \nu)$-Hölder smooth and convex by construction. Next, note that 
    \begin{align*}
        F'(x) \coloneqq \begin{cases}
            -2 \eps, & x \leq y_{T^*}\\
            - 2 \eps + L \pare{x - y_{T^*}}^{\nu - 1}, & y_{T^*} < x \leq y_{T^*} +\delta_\nu \\
            0 & y_{T^*} + \delta_\nu < x.
        \end{cases}
    \end{align*}
    and hence we have $\norm{\nabla F(y_t)} = 2 \eps > \eps$ for all $t \leq T^*$. Finally note that \algname{SGD}, when started at $x_1 = 0$, observes $F'(x_t) = -2\eps$ as long as $x_t \leq y_{T^*}$. In particular, the iterates are given by $x_t = \sum_{\kappa = 1}^{t-1} 2 \eps \eta_t = y_t$, and hence $\abs{F'(x_t)} = 2\eps > \eps$ for all $t \leq T^*$.
\end{proof}

\begin{proof}[Proof of \Cref{thm:nonconvex.lower_bound.SGD_lower_bound}] 
    We first note that the functions in \Cref{prop:nonconvex.sgd_lb.deterministic_small_steps} and \Cref{prop:nonconvex.sgd_lb.stoch_large_steps} are convex, $(0,L, \nu)$-smooth and $G$-Lipschitz by their definitions and our assumption $\frac \eps 2 < G$. Additionally note that we can lift the function $F$ from \Cref{prop:nonconvex.sgd_lb.deterministic_small_steps} to $F_1 \colon \R^d \to \R$, by setting $F_1(x) = F(x_1)$. Hence we can use these constructions for $F_1, \nabla f_1$ and $F_2, \nabla f_2$ respectively.
    
    Now let us first assume $T < \pare{\frac{\pare{1-r}\Delta_1}{8 \eta \eps^2}}^{\frac 1 {1-r}}$. Then we have
    \begin{align*}
        \sum_{t = 1}^{T-1} \eta_t \leq \eta \pare{1 + \int_1^{T-1} t^{-r}} = \eta \pare{1 + \frac{T^{1-r} -1}{1-r}} \leq \frac{\eta T^{1-r}}{1-r} < \frac{\Delta_1}{8 \eps^2}
    \end{align*}
    and hence $\min_{t \in [T]}\norm{\nabla F_1(x_t)} > \eps$ by \Cref{prop:nonconvex.sgd_lb.deterministic_small_steps}. 
    
    Next we choose $F_2, \nabla f_2$ from \Cref{prop:nonconvex.sgd_lb.stoch_large_steps} and $\norm{x_1} = \pare{\frac{2^{2-\nu} \nu \Delta_1}{L}}^{\frac 1 \nu}$, which guarantees $F_2(x_1) - \inf_x F_2(x) \leq \Delta_1$. By iteratively applying \Cref{prop:nonconvex.sgd_lb.stoch_large_steps}, we get
    \begin{align*}
        \min_{t \in [T]} \norm{x_t} \geq \min\set{\norm{x_1}, \min_{t \in [T]} \tau_t}, \qquad \text{where} \qquad \tau_t \coloneqq \frac 1 2 \pare{\frac{\eta_t^{p-1}\sigma^p}{2^pL}}^{\frac 1 {p + \nu - 2}}.
    \end{align*}
    By our non-degeneration assumptions $\eps < \frac G 2$ and $\eps^{\frac \nu {\nu - 1}} < \nu \Delta_1 \pare{\frac L {2^{2-\nu}}}^{\frac 1 {\nu - 1}}$ we have $\norm{\nabla F(x_1)} > \eps$ and hence no $\eps$-stationary point can be reached before the inequality $\tau_T = \min_{t \in [T]} \tau_t \leq \pare{\frac{2^{2-\nu}\eps}{L}}^{\frac{1}{\nu - 1}}$ is satisfied. Rewriting this inequality yields
    \begin{align*}
        \tau_T \leq \pare{\frac{2^{2-\nu}\eps} L}^{\frac 1 {\nu - 1}}
        &\Leftrightarrow \frac{\eta_T^{p-1} \sigma^p}{2^pL} \leq 2^{p + \nu - 2} \pare{\frac{2^{2-\nu}\eps} L}^{\frac {p + \nu - 2} {\nu - 1}}\\
        &\Leftrightarrow \eta T^{-r} \leq \pare{\frac {2^pL} {\sigma^p}}^{\frac{1}{p-1}} 2^{\frac{p+\nu -2}{(p-1)(\nu - 1)}} \pare{\frac \eps L}^{\frac{p + \nu - 2}{(p-1)(\nu - 1)}}.
    \end{align*}
    As this inequality is not satisfied whenever $T < \eta^{\frac 1 r} \pare{\frac{\sigma^p}{2^pL}}^{\frac 1 {r\pare{p-1}}} \pare{\frac{L}{2\eps}}^{\frac{p - 1 + \nu - 1}{r\pare{p-1}\pare{\nu-1}}}$, we get the claim.
\end{proof}

\end{document}